\tikzset{>=latex}
\newcommand{\be}{\begin{enumerate}}
\newcommand{\ee}{\end{enumerate}}
\newcommand{\bi}{\begin{itemize}}
\newcommand{\ei}{\end{itemize}}
\newcommand{\mc}{\mathcal}
\newtheorem{theorem}{Theorem}
\newtheorem{corollary}[theorem]{Corollary}
\newtheorem{lemma}[theorem]{Lemma}
\newtheorem{remark}{Observation}
\theoremstyle{comment}
\newtheorem*{acomment}{\color{BrickRed}{Comment}}
\title{Semicubic cages and small graphs of even girth from voltage graphs}
\author{Flor Aguilar$^{1}$, Gabriela Araujo-Pardo$^{1}$ and Leah Wrenn Berman$^{2}$. }
\begin{document}
\maketitle

\begin{abstract}
An \emph{$(3,m;g)$ semicubic graph} is a graph in which all vertices have degrees either $3$ or $m$ and fixed girth $g$. In this paper, we construct families of semicubic graphs of even girth and small order using two different techniques. The first technique generalizes a previous construction which glues cubic cages of girth $g$ together at remote vertices (vertices at distance at least $g/2$). The second technique, the main content of this paper, produces bipartite semicubic $(3,m; g)$-graphs with fixed even girth $g = 4t$ or $4t+2$ using voltage graphs over $\mathbb{Z}_{m}$. When $g = 4t+2$, the graphs have two vertices of degree $m$, while when $g = 4t$ they have exactly three vertices of degree $m$ (the remaining vertices are of degree $3$ in both cases).  Specifically, we describe infinite families of semicubic graphs $(3,m; g)$ for $g = \{6, 8, 10, 12\}$ for infinitely many values of $m$. The cases $g = \{6,8\}$ include the unique $6$-cage and the unique $8$-cage when $m = 3$. 

The families obtained in this paper for girth $g=\{10,12\}$ include examples with the best known bounds for semicubic graphs $(3,m; g)$.
\end{abstract}

\section{Introduction}
In this paper, we work with simple and finite graphs. We study a generalization of the \emph{Cage Problem}, which has been widely studied since cages were introduced by Tutte \cite{T47} in 1947 and after Erd\"os and Sachs \cite{ES63} proved their existence in 1963. An \emph{$(r,g)$-graph} is a $r$-regular graph in which the shortest cycle has length equal to $g$; that is, it is a $r$-regular graph with girth $g$. An \emph{$(r,g)$-cage} is an $(r,g)$-graph with the smallest possible number of vertices among all $(r,g)$-graphs. The Cage Problem consists of finding $(r,g)$-cages; it is well known that cages exist only for very limited sets of parameter pairs $(r, g)$. In the case where the orders of the $(r, g)$-cages match a simple lower bound due to Moore \cite{EJ13}, the cages are called \emph{Moore cages}. 

Biregular graphs, denoted as \emph{$(r,m;g)$-graphs}, generalize $(r,g)$-graphs, with cages generalizing to \emph{biregular cages}. Specifically, given three positive integers $r,m, g$ with $2\le r<m$, an \emph{$(r,m;g)$-graph} is a graph of girth $g$ in which all vertices have degrees $r$ and $m$. We denote the number of vertices of an $(r, m;g)$-graph as $n(r,m;g)$, and a \emph{biregular cage} is an $(r,m;g)$-graph in which $n(r,m;g)$ is as small as possible. Biregular graphs have been studied by many authors (see \cite{ABGMV08,ABV09,ABLM13,AEJ16,CGK81,DGM81,HWJ92,KPW77,W82,YL03}) since Chartrand, Gould and Kapoor \cite{CGK81} proved their existence. 
A biregular graph with $r = 3$ is often called a \emph{semicubic graph}, and naturally, a semicubic $(3,m;g)$-graph with a minimal number of vertices for a fixed $m>3$ and fixed $g$ is called a semicubic cage. 

In this paper, we construct families of semicubic graphs of even girth and small order using two different techniques. The first technique generalizes a construction used in \cite{ABV09,ABLM13} in which cubic cages of girth $g$ are glued together using \emph{remote vertices}, that is vertices at distance $g/2$. The second technique, which is the main content of this paper, consists of constructing semicubic graphs of even girth using voltage graphs. With this technique, we improve the graphs given using the ``identifying remote vertices'' technique for girth $g=\{6,8,10,12\}$. However, graphs with the same orders as those from our voltage graph construction were obtained previously for girth $g=\{6,8\}$ (in \cite{ABLM13,AEJ16,HWJ92}) using different techniques. Our principal contribution is for graphs of girth $g=\{10,12\}$, where we find new graphs with orders between the lower bounds given in \cite{AEJ16} and the upper bounds given in this paper found by identifying remote vertices.

The voltage graph construction gives us, naturally, the Heawood graph or $(3;6)$-cage and the Tutte graph or $(3;8)$-cage, for $m=3$ and $g=\{6,8\}$ respectively.  
These graphs occur as part of our constructions of families of $(3,m;6)$-cages of order $4m+2$ and $(3,m;8)$-graphs of order $9m+3$. We will detail how our constructions generalize the constructions given in \cite{ABLM13,HWJ92} in the corresponding sections. As the authors state in \cite{ABLM13}, for girth $8$ and $m=\{4,5,6,7\}$ these graphs, and also ours, are cages, while for the rest of the values of $m$ they are close to the lower bound $n(3,m;8)\geq \lceil \frac{25m}{3}\rceil+5$ given for $m\geq 7$.

In \cite{AEJ16}, the authors proved that for $m$ much larger than $r$ and even girth $g\equiv 2\mod{4}$ there exist infinite families of $(r,m;g)$-graphs with few vertices, with order close to the lower bound also given in that paper. Specifically, for girth $g=6$, the graphs described are biregular cages, because they attain the lower bound given in \cite{YL03}. However, in that paper, the authors did not give an explicit construction of these graphs; they only proved their existence using a strong result about Hamiltonian graphs and girths given by Sachs in 1963 (\cite{S63}). In particular, for girth $10$ the existence of a semiregular cage continues to be open for small values of $m$.

For girth $g=10$, using the identifying remote vertices technique, we obtain graphs of order greater than $22m+2m/3$ (see Section \ref{identify}).  With the voltage graph construction, we give explicit constructions for two different infinite families of $(3,m;10)$-graphs. The first construction produces $(3,m;10)$-graphs for $m\geq 4$ of order $24m+2$, with  2 vertices of degree $m$ and  $24m$ vertices of degree 3. This number of vertices coincides with the parameter obtained in \cite{AEJ16} for $m$ much larger than $3$. The second construction produces graphs of order $20m+2$ for $m\geq 7$ with 2 vertices of degree $m$ and $20m$ vertices of degree 3. This second family clearly improves the upper bound for $(3,m;10)$-cages given in Section \ref{identify} and has a difference of less than $3m$ to the lower bound $n(3,m;10)\geq \lceil \frac{53m}{3}\rceil+9$ given in Lemma 3.4 in \cite{AEJ16}.  

Finally, for girth $12$, using the identifying remote vertices technique, we obtain graphs of order greater than $41m+m/3$ (see Section \ref{identify}). Using voltage graphs, we give explicit constructions of two different infinite families of  $(3,m;12)$-graphs. The first gives us $(3,m;12)$-graphs for $m\geq 9$ of order $49m+3$ with 3 vertices of degree $m$ and $49m$ vertices of degree 3. This construction is new and gives us new parameters of semicubic cages of girth $12$, but the order of these graphs is bigger than the upper bounds from the identifying vertices construction. However, it is based on a general structure that will be considered throughout the paper, and for this reason we consider that it is important to include it among our results. We also present a second family of semicubic graphs of girth $12$ using voltage graphs, giving us $(3,m;12)$-graphs for $m\geq 10$ of order $41m+3$ with 3 vertices of degree $m$ and $41m$ vertices of degree 3. This family improves the upper bound given in Section \ref{identify} and produces graphs with a difference of less than $5m$ to the lower bound $n(3,m;12)\geq \lceil \frac{109m}{3}\rceil+17$ given in Lemma 3.4 in \cite{AEJ16}.

This paper is organized as follows. In Section \ref{identify}, we construct semicubic graphs with few vertices identifying remote vertices that give us upper bounds for semicubic cages of even girth generalizing. In Section \ref{prel} we give some definitions of voltage graphs and derived graphs that we will use in the rest of the paper, including introducing a new definition of \emph{pinned} vertices. In Section  \ref{case4t+2}, we describe a general construction for graphs that are of girth at most $4t+2$, with two vertices of degree $m$ and many vertices of degree $3$, via lifting certain types of voltage graphs over $\mathbb{Z}_{m}$. We provide constructions for graphs with girths exactly $6$ and $10$,  including producing the Heawood graph as the $m= 3$ case of the girth $6$ family. In Section \ref{case4t}, we similarly 
describe a general construction for graphs with girth at most $4t$ with three vertices of degree $m$ (this general construction is used in \cite{A23} to construct bi-regular graphs of even girth $g=4t$, the paper is in progress.)
We provide explicit voltage graphs whose lifts form infinite families of semicubic graphs of girths equal to $8$ and $12$, including producing the Tutte $8$-cage as a member of the girth $8$ family, where $m = 3$. 
 
 \section{Constructing upper bounds for semicubic cages of even girth by identifying remote vertices}\label{identify}
In this section we generalize Theorem 3, given in \cite{ABV09}, in which the authors identify copies of $(r;g)$-cages at \emph{remote vertices}, which are vertices at distance at least $g/2$. These techniques are also used in \cite{ABLM13} to construct biregular graphs of even girth. 


The results obtained in \cite{ABV09,ABLM13}, identifying remote vertices on graphs of girth $8$, are better than the results given in Theorem \ref{thm:identify} for girth $8$. In particular, for girth $g=8$, there exist results obtained using the properties of  generalized quadrangles that produce five remote vertices in the $(3,8)$-cage. Using this fact,  Corollary 7 in  \cite{ABV09} states that the $(\{3,m\};8)$-cages have order $8m+\frac{m}{3}+5$ for $m=3k$ and $k\geq 1$, and  Corollary 3.3 in \cite{ABLM13} states that $n(\{3,m\};8)\leq 8m+\frac{m}{3}-\frac{16}{3}t+21$ for $m=3k+t$ and $t\in \{1,2\}$. 

Consequently, we will use  Theorem \ref{thm:identify} only to produce bounds of the order of semicubic cages of girth $g=\{10,12\}$, which are the parameters that we improve on this paper. The constructions described in the proof are illustrated in Figure \ref{fig:identify}.

\begin{theorem}\label{thm:identify}
Let $G$ be a $(3;g)$-graph of even girth and order $n_g$ with at least two vertices at distance $g/2$. If $m=3k+t$, we obtain $(3,m;g)$-graphs of order: 
 \[k(n_g-2) + 
 \begin{cases}
 2 & \text{ if } t = 0\\
 n_{g} +2 &\text{ if } t = 1\\
 n_{g} &\text{ if } t = 2
 \end{cases}
 \]
\end{theorem}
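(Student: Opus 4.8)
The plan is to build the required $(3,m;g)$-graph by attaching several copies of $G$ to two shared vertices $U$ and $V$ that play the role of the remote pair, adjusting the attachment according to the residue $t$ of $m$ modulo $3$. Fix vertices $u,v$ of $G$ with $d_G(u,v)\ge g/2$, and call a \emph{block} the graph $G\setminus\{u,v\}$, which carries three ``half-edges'' that pointed toward the deleted $u$ and three that pointed toward the deleted $v$. For $t=0$ (so $m=3k$), I would take $k$ disjoint blocks and glue all $u$-half-edges to a single new vertex $U$ and all $v$-half-edges to a single new vertex $V$; then $\deg U=\deg V=3k=m$, every other vertex keeps degree $3$, and the order is $2+k(n_g-2)=k(n_g-2)+2$. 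Equivalently, this identifies $k$ copies of $G$ at the remote pair $(u,v)$.

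For $t\in\{1,2\}$ I would start from the $t=0$ graph (where $\deg U=\deg V=3k$) and raise both degrees by $t$ using one extra copy $G'$ of $G$ from which a single edge $\{a,b\}$ has been deleted. For $t=1$, join $a$ to $U$ and $b$ to $V$ by two new edges: then $a,b$ recover degree $3$, the degrees of $U,V$ each rise by $1$, and all $n_g$ vertices of $G'$ are new, giving order $k(n_g-2)+n_g+2$. For $t=2$, instead \emph{identify} $a$ with $U$ and $b$ with $V$: the vertices $a,b$ (of degree $2$ after deleting $\{a,b\}$) merge their two surviving edges into $U,V$, raising each degree by $2$, and only the $n_g-2$ interior vertices of $G'$ are new, giving order $k(n_g-2)+n_g=(k+1)(n_g-2)+2$. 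In all three cases exactly $U$ and $V$ have degree $m$ and every other vertex has degree $3$, so the degree and order bookkeeping reproduces the three branches of the formula.

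The substantive point is that the girth is exactly $g$. For the lower bound I would use that $\{U,V\}$ is a vertex cut whose removal separates the distinct blocks (and the extra copy, which attaches only at $U$ and $V$). Hence any cycle either lies inside a single piece, where it is a cycle of $G$ or of $G'$ minus an edge and so has length $\ge g$, or else it meets both $U$ and $V$ and splits into two internally disjoint $U$--$V$ arcs, each confined to one piece. An arc inside a block is a $u$--$v$ path of $G$, of length $\ge d_G(u,v)\ge g/2$, while an arc through the deleted-edge copy has length $\ge g-1$ because $d_{G'-\{a,b\}}(a,b)\ge g-1$. Thus every crossing cycle has length at least $g/2+g/2=g$, so the girth is $\ge g$. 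For the upper bound, since $m>3$ forces $k\ge 1$, there is at least one block, and the subgraph induced by one block together with $U,V$ is isomorphic to $G$ and already contains a $g$-cycle, so the girth is $\le g$.

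The main obstacle is this girth lower bound, and in particular the two degree-adjusting gadgets: one must check that deleting $\{a,b\}$ from $G'$ cannot shorten any $U$--$V$ traversal, which follows from $d_{G'-\{a,b\}}(a,b)\ge g-1$ since the deleted edge lies on no cycle shorter than $g$, and that attaching $G'$ only at $U$ and $V$ keeps $\{U,V\}$ a separating set so the two-arc analysis still applies. The hypothesis $d_G(u,v)\ge g/2$ is exactly what forces the two block-arcs to sum to at least $g$; were the two distinguished vertices any closer, a crossing cycle could be short, so the remoteness assumption enters in an essential way.
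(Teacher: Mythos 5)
Your proposal is correct and takes essentially the same approach as the paper: the same identification of $k$ copies of $G$ at a remote pair, the same edge-deletion gadget in an extra copy for $t\in\{1,2\}$ (your joining of $a$ to $U$ and $b$ to $V$ when $t=1$ reconstructs exactly the paper's two added pendant vertices, and your identification for $t=2$ is the paper's case verbatim), and the same girth argument splitting any cycle through both hub vertices into two internally disjoint arcs of length at least $g/2$. Your explicit use of $\{U,V\}$ as a separating set and of the bound $d_{G'-\{a,b\}}(a,b)\ge g-1$ only makes precise what the paper's proof leaves implicit.
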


\begin{proof}
We divide the proof into three cases.
\begin{enumerate}[(i)]
\item  Let $m=3k$, and let  $G_{1}$ and $G_{2}$ be two copies of a $(3,g)$-graph of even girth $g$ and order $n_{g}$. Let $x_1$ and $y_1$ be two vertices at distance at least $g/2$ (remote vertices) in $G_1$ and let $x_2$ and $y_2$ be two remote vertices in $G_2$. Construct a graph $G$ by taking $G_1$ and $G_2$ and identifying  $x_1$ with $x_2$ (call this new vertex $x$) and $y_1$ with $y_2$ (called $y$). 
It is easy to see that the shortest cycle that passes through two vertices of $G_1$, with at least one of them different from either $x$ or $y$, is totally contained in $G_1$ and thus has length at least $g$ (and analogously for $G_{2}$). If the cycle contains both $x$ and $y$, then, since the  distance between $x$ and $y$ is at least $g/2$, the cycle is given by two disjoint paths (in $G_{1}$ or $G_{2}$) between $x$ and $y$, each of them with length at least $g/2$, so together they form a cycle of length at least $g$. 
Now let $G$ be a graph formed by identifying $k$ copies, where the $i$-th copy is labelled $G_{i}$, at pairs of remote vertices $x_{i}$ and $y_{i}$ in $G_{i}$, calling the identified vertices $x$ and $y$ as before. Since each of the graphs $G_{i}$ is 3-regular, the identified vertices $x$ and $y$ have degree $m = 3k$. Applying the same shortest cycle analysis as above to each pair $(G_{i}, G_{j})$, it follows that the girth of $G$ is also at least $g$,
and $G$ has order $k n_g-2(k-1)=k(n_g-2)+2$, with two vertices of degree $m$ and $k(n_{g} - 2)$ vertices of degree 3.
\item   Suppose that $m=3k+2$. Take $k+1$ copies of a  $(3,g)$-graph of order $n_{g}$ of even girth $g$, and label the $i$-th copy as $G_{i}$. In $G_{1}$, let $x_1y_1$ be any edge. Delete $x_1y_1$, and call this new graph $G'_1$.  Notice that all the vertices in $G'_1$ have degree $3$ except $x_1$ and $y_1$, which have degree $2$, and since $G_1$ has girth $g$,  $x_1$ and $y_1$ are now at distance at least $g-1$. Now, suppose that $x_i$ and $y_i$ are two vertices at distance at least $g/2$ in $G_i$, for $i\in \{2,\ldots,k+1\}$. Construct a new graph $G$ by identifying all the vertices $x_i$, calling the new vertex $x$, and all the vertices $y_i$, calling the new vertex $y$. As in the previous case, we obtain a graph of girth $g$, but in this case, the $n_{g} - 2$ vertices in each copy other than $x$ and $y$ have degree $3$, and $x$ and $y$ have degree $m=3k+2$. It follows that $G$ is a $(3,m;g)$-graph of order $(n_{g}-2)(k+1) + 2 = k(n_{g}-2) + n_{g}$.
\item  Suppose that $m=3k+1$. take $k+1$ copies of a  $(3,g)$-graph of order $n_{g}$ of even girth $g$, and label the $i$-th copy as $G_{i}$. As before, choose any edge $x_{1}y_{1}$ in $G_{1}$ and delete it. Now add two vertices to $G_{1}$, one of them a neighbor of $x_1$, called $x$, and the other a neighbor of $y_1$, called $y$. Notice that these two vertices are at a distance of at least $g+1$ in $G_{1}$. 
As in the previous cases, construct a graph $G$ identifying $x$ and $y$ with two remote vertices $x_i$ and $y_i$  in each of the $k$ remaining graphs $G_i$. This graph $G$ has two vertices of degree $3k+1$ and $k(n_{g} - 2)+n_{g}$ vertices of degree 3, for a total order of 
$k(n_g-2)+n_g+2$.
\end{enumerate}
\end{proof}

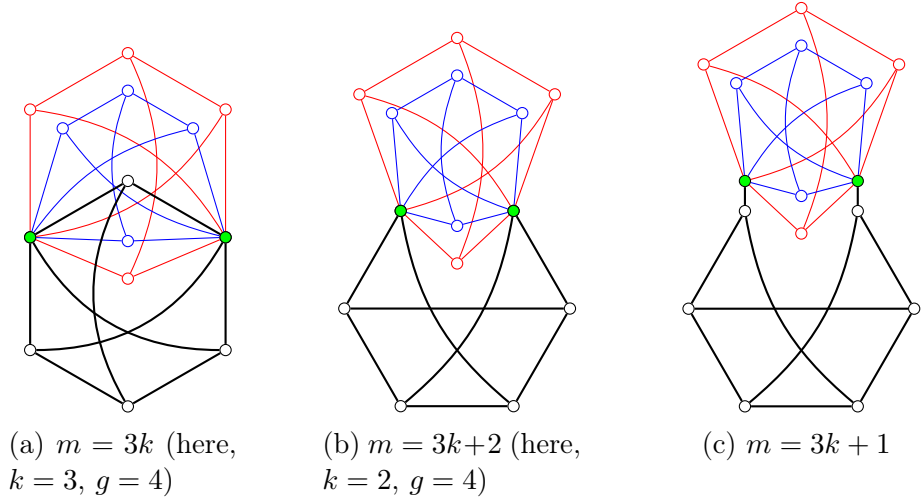
\begin{figure}[htbp]
\begin{center}
\subfloat[$m = 3k$ (here, $k = 3$, $g = 4$)]{
\begin{tikzpicture}[vtx/.style={draw, circle, inner sep = 1.5 pt, font = \tiny}]
\pgfmathsetmacro{\r}{1.5}
\foreach \i in {0,...,5}{
\node[vtx] (v\i) at (360*\i/6+30: \r){};
}
\foreach \i in {0,1,2,4}{
\node[red, vtx] (w\i) at ($(360*\i/6+30 : \r)+(0,1.7)$){};
\node[blue, vtx] (u\i) at ($(360*\i/6+30: \r-.5)+(0,1.7)$){};
}

\foreach \j/\k/\bend in {w/red/25, u/blue/-20}{
\draw[\k] (v0) -- (\j0);
\draw[\k] (\j1) -- (\j2);
\draw[\k] (\j2) -- (v2);
\draw[\k] (v2) -- (\j4);
\draw[\k] (\j4) -- (v0);
\draw[\k] (\j1) -- (\j0);
\draw[\k] (\j0) to[bend left =\bend](v2);
\draw[\k] (\j1) to[bend left = \bend](\j4);
\draw[\k] (\j2) to[bend left = \bend](v0);
}

\draw[thick] (v0) -- (v1) -- (v2) -- (v3) -- (v4) -- (v5) -- (v0);
\draw[thick] (v0) to[bend left = 30] (v3);
\draw[thick] (v1) to[bend left = -30] (v4);
\draw[thick] (v2) to[bend left = -30] (v5);

\node[vtx, fill = green]at (v2){};
\node[vtx, fill = green]at (v0){};

\end{tikzpicture}
}
\hspace{1cm}
\subfloat[$m = 3k+2$  (here, $k = 2$, $g = 4$)]{
\begin{tikzpicture}[vtx/.style={draw, circle, inner sep = 1.5 pt, font = \tiny}]
\pgfmathsetmacro{\r}{1.5}
\foreach \i in {0,...,5}{
\node[vtx] (v\i) at ($(360*\i/6: \r)+(0,-.6)$){};
}
\foreach \i in {0,1,2,4}{
\node[red, vtx] (w\i) at ($(360*\i/6 + 30 : \r)+(0,1.5)$){};
\node[blue, vtx] (u\i) at ($(360*\i/6 + 30: \r-.5)+(0,1.5)$){};
}

\foreach \j/\k/\bend in {w/red/22, u/blue/-20}{
\draw[\k] (v1) -- (\j0);
\draw[\k] (\j0) -- (\j1);
\draw[\k] (\j1) -- (\j2);
\draw[\k] (\j2) -- (v2);
\draw[\k] (v2) -- (\j4);
\draw[\k] (\j4) -- (v1);
\draw[\k] (\j1) to[bend left =\bend](\j4);
\draw[\k] (\j2) to[bend left = \bend](v1);
\draw[\k] (\j0) to[bend left = \bend](v2);
}

\draw[thick] (v1) -- (v0) -- (v5) -- (v4) -- (v3) -- (v2);
\draw [thick](v2) to [bend left = -20] (v5);
\draw[thick] (v1) to [bend left = 20] (v4);
\draw[thick] (v0) -- (v3);

\node[vtx, fill = green]at (v2){};
\node[vtx, fill = green]at (v1){};

\end{tikzpicture}
}
\hspace{1cm}
\subfloat[$m = 3k+1$]{
\begin{tikzpicture}[vtx/.style={draw, circle, inner sep = 1.5 pt, font = \tiny}]
\pgfmathsetmacro{\r}{1.5}
\foreach \i in {0,...,5}{
\node[vtx] (v\i) at ($(360*\i/6: \r)+(0,-1)$){};
}
\foreach \i in {0,1,2,4}{
\node[red, vtx] (w\i) at ($(360*\i/6 + 30 : \r)+(0,1.5)$){};
\node[blue, vtx] (u\i) at ($(360*\i/6 + 30: \r-.5)+(0,1.5)$){};
}

\node[vtx, fill = green] (x) at ($(v1) + (0,.4)$){};
\node[vtx, fill = green] (y) at ($(v2) + (0,.4)$){};

\foreach \j/\k/\bend in {w/red/22, u/blue/-20}{
\draw[\k] (x) -- (\j0);
\draw[\k] (\j0) -- (\j1);
\draw[\k] (\j1) -- (\j2);
\draw[\k] (\j2) -- (y);
\draw[\k] (y) -- (\j4);
\draw[\k] (\j4) -- (x);
\draw[\k] (\j1) to[bend left =\bend](\j4);
\draw[\k] (\j2) to[bend left = \bend](x);
\draw[\k] (\j0) to[bend left = \bend](y);
}

\draw[thick] (v1) -- (v0) -- (v5) -- (v4) -- (v3) -- (v2);
\draw[thick] (v2) to [bend left = -20] (v5);
\draw[thick] (v1) to [bend left = 20] (v4);
\draw[thick] (v0) -- (v3);
\draw[thick] (v1) -- (x);
\draw[thick](v2) -- (y);


\end{tikzpicture}
}
\caption{Illustrating the construction in Theorem \ref{thm:identify}, which produces semicubic $(3,m; g)$ graphs of girth $g$ beginning with input graphs with $n_{g}$ vertices. For the purposes of illustration, the construction is shown using $K_{3,3}$, which is the unique $(3,4)$-cage; (a) is a $(3,9; 4)$ graph with $3(6-2)+2 = 14$ vertices, (b) is a $(3, 8; 4)$ graph  with $2(6-2) + 6 = 14$ vertices; (c) is a $(3,7; 4)$ graph with $2(6 - 2) + 6+ 2 = 16$ vertices.  }
\label{fig:identify}
\end{center}
\end{figure}

Taking into account that the order of each of the $(3;10)$-cages is equal to $70$ (recall that the girth 10 cages are the Balaban Cage and two others $\cite{PBMOG04}$) and it is easy to find two remote vertices at distance $5$ in the Balaban cage, for example, we obtain the following corollary: 

\begin{corollary}\label{girth10}
There exist $(3,m;10)$-graphs of order: 
\begin{itemize}
\item $22m+\frac{2m}{3}+2$ for $m=3k$
\item $22m+\frac{2m}{3}+49+\frac{1}{3}$ for $m=3k+1$
\item $22m+\frac{2m}{3}+24+\frac{2}{3}$ for $m=3k+2$
\end{itemize}
\end{corollary}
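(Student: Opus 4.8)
The plan is to invoke Theorem~\ref{thm:identify} with $g = 10$, using as the input graph one of the three cubic cages of girth $10$; each such cage has order $n_{10} = 70$, as recorded above. The hypothesis of the theorem asks only for two vertices at distance at least $g/2 = 5$ in the input graph. I would verify this in one of two ways: either by pointing to the explicit pair of vertices at distance $5$ in the Balaban cage noted in the text, or, more generally, by observing that in \emph{any} graph of even girth $g$ two antipodal vertices on a shortest cycle are automatically at distance exactly $g/2$ (a shorter connecting path would close up with one arc of the cycle into a closed walk of length less than $g$, forcing a shorter cycle and contradicting the girth). Either way the distance hypothesis holds, so all three branches of Theorem~\ref{thm:identify} apply with $n_g = 70$.

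It then remains to substitute $n_g = 70$ and re-express each order in terms of $m$. In the case $m = 3k$ (i.e. $k = m/3$) the theorem yields $k(70 - 2) + 2 = 68k + 2 = \frac{68m}{3} + 2 = 22m + \frac{2m}{3} + 2$. In the case $m = 3k+1$ it yields $68k + 72 = \frac{68m + 148}{3} = 22m + \frac{2m}{3} + 49 + \frac{1}{3}$, and in the case $m = 3k+2$ it yields $68k + 70 = \frac{68m + 74}{3} = 22m + \frac{2m}{3} + 24 + \frac{2}{3}$. Each line matches the corresponding claim of the corollary; note in passing that the fractional parts recombine to an integer (for instance $\frac{2m}{3} + \frac{1}{3}$ is an integer when $m \equiv 1 \pmod 3$), as they must since these are vertex counts.

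Since the entire argument is a specialization of the already-established Theorem~\ref{thm:identify}, there is essentially no obstacle: the only point requiring any thought is the distance hypothesis, and the general antipodal-vertex observation dispatches it uniformly without any case-specific knowledge of the girth-$10$ cages. The remainder is the routine rewriting of the three arithmetic expressions displayed above.
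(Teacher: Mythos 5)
Your proposal is correct and follows essentially the same route as the paper: the corollary is obtained exactly as you describe, by substituting $n_{10}=70$ (the common order of the three $(3;10)$-cages, with two remote vertices at distance $5$ in the Balaban cage) into the three cases of Theorem~\ref{thm:identify}, and your arithmetic in all three cases agrees with the displayed orders. Your general observation that two antipodal vertices on a shortest cycle in a graph of even girth $g$ are at distance exactly $g/2$ is a nice touch that makes the distance hypothesis self-contained, whereas the paper simply asserts that such a pair is easy to find in the Balaban cage.
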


And also, taking into account that the Moore $(3;12)$-cage has order $126$ and it is the incidence graph of the generalized hexagon of order $2$, which also has two vertices at distance $6$, it follows that: 

\begin{corollary}\label{girth12}
There exist $(3,m;12)$-graphs of order: 
\begin{itemize}
\item $41m+\frac{m}{3}+2$ for $m=3k$
\item $41m+\frac{m}{3}+86+\frac{2}{3}$ for $m=3k+1$
\item $41m+\frac{m}{3}+43+\frac{1}{3}$ for $m=3k+2$
\end{itemize}
\end{corollary}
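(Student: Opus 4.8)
The plan is to apply Theorem \ref{thm:identify} directly, taking as the input graph $G$ the Moore $(3;12)$-cage, so that $n_{g} = n_{12} = 126$. The only hypothesis of that theorem requiring verification is that this graph possesses at least two vertices at distance $g/2 = 6$. This is exactly the remark recorded just before the statement: the Moore $(3;12)$-cage is the incidence graph of the generalized hexagon of order $2$, and being a Moore cage of girth $12$ it has diameter $6$. Hence pairs of vertices at distance exactly $6$ exist and may be chosen as the remote vertices $x_{i}, y_{i}$ demanded by the construction. With this input, the three cases of Theorem \ref{thm:identify} reduce to concrete numerical formulas.

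For $t = 0$ (so $m = 3k$), the theorem gives order $k(n_{12} - 2) + 2 = 124k + 2$; substituting $k = m/3$ rewrites $124k$ as $\tfrac{124m}{3} = 41m + \tfrac{m}{3}$, yielding the first claimed value $41m + \tfrac{m}{3} + 2$. For $t = 1$ (so $m = 3k+1$) the order is $k(n_{12}-2) + n_{12} + 2 = 124k + 128$, and for $t = 2$ (so $m = 3k+2$) it is $k(n_{12}-2) + n_{12} = 124k + 126$. In each of these two cases I would substitute $k = (m - t)/3$, write the leading term again as $\tfrac{124m}{3} = 41m + \tfrac{m}{3}$, and collect the remaining constants: $128 - \tfrac{124}{3} = \tfrac{260}{3} = 86 + \tfrac{2}{3}$ and $126 - \tfrac{248}{3} = \tfrac{130}{3} = 43 + \tfrac{1}{3}$, matching the second and third asserted orders respectively.

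The computation is entirely routine, so there is no genuine obstacle: the only substantive content is the geometric fact that the Moore $(3;12)$-cage realizes its diameter $6$ (which guarantees the required remote vertices), together with careful bookkeeping of the fractional constants so that the three cases agree with the stated orders.
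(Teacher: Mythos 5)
Your proposal is correct and matches the paper's own argument exactly: the corollary is obtained by applying Theorem \ref{thm:identify} to the Moore $(3;12)$-cage (the incidence graph of the generalized hexagon of order $2$, with two vertices at distance $6$), with $n_{g} = 126$, and your arithmetic for all three residue classes of $m$ agrees with the stated orders. Nothing further is needed.
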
 

Finally, we would like to calculate these graphs for girth $14$ using the smallest $(3;14)$-graph known currently (the \emph{record} graph), which was given by Exoo in \cite{E09} of order $348$. The current lower bound for a $(3;14)$-graph is 258, given by Mc Kay et. al. in \cite{MMP00}. From this construction, it follows that:

\begin{corollary}\label{girth14}
There exist $(3,m;14)$-graphs of order: 
\begin{itemize}
\item $115m+\frac{m}{3}+2$ for $m=3k$
\item $115m+\frac{m}{3}+234+\frac{2}{3}$ for $m=3k+1$
\item $115m+\frac{m}{3}+117+\frac{1}{3}$ for $m=3k+2$
\end{itemize}
\end{corollary}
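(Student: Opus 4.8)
The plan is to apply Theorem \ref{thm:identify} directly with $g = 14$ and $n_g = 348$, since Exoo's construction furnishes a $(3;14)$-graph on $348$ vertices. The only hypothesis of the theorem that must be checked is the existence of two vertices at distance at least $g/2 = 7$; once that is in place, the three claimed orders follow by substituting $n_g = 348$ into the three cases of the theorem and simplifying the arithmetic for each residue class of $m$ modulo $3$.

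First I would verify the remote-vertex hypothesis. Rather than invoking a specific structural property of Exoo's graph, I would observe that any graph of even girth $g$ automatically contains two vertices at distance exactly $g/2$: take any shortest cycle $C$ (of length $g$) and two antipodal vertices $u,v$ on it, so that both arcs of $C$ between them have length $g/2$. If some path $P$ joined $u$ and $v$ with $d_G(u,v) = \ell < g/2$, then $P$ together with one arc of $C$ would be a union of two distinct $u$–$v$ paths of total length $\ell + g/2 < g$, which must contain a cycle of length less than $g$, contradicting the girth. Hence $d_G(u,v) = g/2 = 7$, and $u,v$ are the required remote vertices in Exoo's graph. This makes the corollary independent of the finer structure of that graph, just as the distance-$5$ and distance-$6$ observations were used for the Balaban cage and the generalized hexagon incidence graph in Corollaries \ref{girth10} and \ref{girth12}.

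Next I would substitute $n_g = 348$, so that $n_g - 2 = 346$, into the three cases of Theorem \ref{thm:identify}. For $m = 3k$ the order is $346k + 2$; writing $346k = 345k + k = 115m + \tfrac{m}{3}$ gives $115m + \tfrac{m}{3} + 2$. For $m = 3k+1$ the order is $346k + n_g + 2 = 346k + 350$, and substituting $k = (m-1)/3$ and simplifying gives $115m + \tfrac{m}{3} + 234 + \tfrac{2}{3}$. For $m = 3k+2$ the order is $346k + n_g = 346k + 348$, which with $k = (m-2)/3$ simplifies to $115m + \tfrac{m}{3} + 117 + \tfrac{1}{3}$. These are precisely the three stated orders.

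I do not expect a serious obstacle: the corollary is a direct specialization of Theorem \ref{thm:identify}, and all the content lies in the input data, namely $n_g = 348$ from Exoo and the girth being $14$. The single point meriting any care is confirming the remote-vertex hypothesis, which I would dispatch using the general antipodal-vertices argument above; the remainder is routine substitution and simplification.
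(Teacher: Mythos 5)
Your proposal is correct and takes essentially the same route as the paper: the corollary is obtained exactly as you describe, by substituting $n_g = 348$ (Exoo's record $(3;14)$-graph) into the three cases of Theorem \ref{thm:identify}, and your arithmetic in each residue class of $m$ modulo $3$ reproduces the stated orders. Your antipodal-vertices argument, showing that any graph of even girth $g$ automatically contains two vertices at distance exactly $g/2$, is a welcome strengthening: the paper checks the remote-vertex hypothesis ad hoc for girths $10$ and $12$ (Balaban cage, generalized hexagon) and leaves it implicit for Exoo's graph, whereas your observation shows the hypothesis holds for free.
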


\section{Preliminaries on voltage graphs and derived graphs} \label{prel}
In this section, we present definitions and preliminary results about voltage graphs, and we exhibit some voltage and derived graphs constructed with them. 


Following standard references (e.g., \cite{BP03,GT87,PS10}), a voltage graph $G$ is a labeled directed multigraph, often including loops and parallel edges, along with a group $\Gamma$; the labels on the edges are elements of $\Gamma$. Throughout this paper, $\Gamma$ is a cyclic group $\mathbb{Z}_{m}$ with addition as the group operation. The \emph{derived} graph $G_{m}$, also called the \emph{lift} graph, for a voltage graph with voltage group $\mathbb{Z}_{m}$ is formed from $G$ as follows: each vertex $v$ in $G$ corresponds to $m$ vertices in $G_{m}$, labelled $v^{0}, \cdots, v^{m-1}$. An arrow in $G$ from $v$ to $w$ labelled $a$ means that vertex $v^{i}$ and vertex $w^{i+a}$ are connected by an edge in $G_{m}$, with all indices throughout the paper taken modulo $m$.\footnote{Note that often, elements of a single orbit of elements in the lift graph are labeled with subscripts, but here, we are using superscripts to label the elements in the orbit, and reserving subscripts to label vertices in the voltage graph. See Figure \ref{fig:pinned vertex} for an example of this indexing.} Note that we could also have drawn an arrow from $w$ to $v$ labeled $-a$ and produced the same edges in the lift. If vertex $v$ is incident with a loop labeled $a$ in $G$, then in $G_{m}$, vertices $v^{i}$ and $v^{i+a}$ are incident. Figure \ref{fig:voltEx} shows the standard drawing of the Heawood graph, the $(3,6)$-cage, as a $\mathbb{Z}_{7}$ lift of a voltage graph on two vertices. 

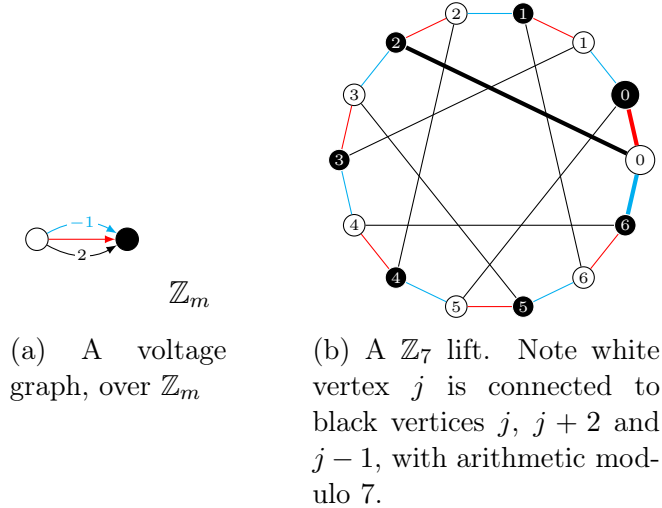
\begin{figure}[htbp]
\begin{center}
\subfloat[
A voltage graph, over $\mathbb{Z}_{m}$]{
\begin{tikzpicture}[lbl/.style = {midway, fill = white, inner sep = 1 pt, font=\tiny}, scale = .6, ]
\node[draw, circle, inner sep = 3 pt] (A) at (0,0){};
\node[draw, circle, inner sep = 3 pt, fill = black] (B) at (2,0){};
\draw[-latex, red] (A) to[bend left = 0]  (B);
\draw[-latex, cyan] (A) to[bend left = 30] node[lbl]{$-1$} (B);
\draw[-latex, black] (A) to[bend right = 30] node[midway, fill = white, inner sep = 1 pt, font=\tiny]{$2$} (B);
\node[below right= 12 pt of B] {$\mathbb{Z}_{m}$};
\end{tikzpicture}
}
\hspace{1 cm}
\subfloat[A $\mathbb{Z}_{7}$ lift. Note white vertex $j$ is connected to black vertices $j$, $j+2$ and $j-1$, with arithmetic modulo 7.
]{
\begin{tikzpicture}
\node[draw, circle,  inner sep = 2 pt, font=\tiny] (v0) at (360*0/7:2) {0};
\node[draw, circle, white, fill=black, inner sep = 2 pt, font=\tiny] (w0) at (360*0/7+ 180/7:2) {0};

\foreach \j in {1,...,6}{
\node[draw, circle,  inner sep = 1 pt, font=\tiny] (v\j) at (360*\j/7:2) {\j};
\node[draw, circle, white, fill = black, inner sep = 1 pt, font=\tiny] (w\j) at (360*\j/7 + 180/7:2) {\j};
}

\draw[ultra thick,red] (v0) -- (w0);
\foreach \j in {1,...,6}{
\draw[red] (v\j) -- (w\j);
}

\draw[ultra thick,cyan] (v0) -- (w6);
\foreach \j in {1,...,6}{
\draw[cyan] let \n1 = {int(mod(\j-1,7))} in (v\j) -- (w\n1);
}

\draw[ultra thick,black] (v0) -- (w2);
\foreach \j in {1,...,6}{
\draw[black] let \n1 = {int(mod(\j+2,7))} in (v\j) -- (w\n1);
}

\end{tikzpicture}

}
\caption{The Heawood graph, the $(3,6)$-cage, may be drawn with $7$-fold rotational symmetry as on the right. It  can be drawn naturally as a $\mathbb{Z}_{7}$ lift of the voltage graph shown to the left; $\mathbb{Z}_{7}$-orbits are indicated with color, and the 0th element of each symmetry class is shown larger/thicker.
}
\label{fig:voltEx}
\end{center}
\end{figure}

The collection of directed edges and loops in a voltage graph, along with their labels, is called a \emph{voltage assignment}. Similarly, if $S$ is a subgraph of $G$, the voltage assignment $v(S)$ is the set of edges and labels in $S$. Throughout the paper, an unlabeled edge in a voltage graph is assumed to have voltage assignment 0, which we also draw as undirected.

A final modification of the voltage graph construction, which as far as we know is new to this paper, is the notion of a \emph{pinned vertex}, which is a special vertex of degree 1 in the voltage graph. We indicate this construction in the voltage graph using the symbol $*$ (also indicated with $\Box$ in figures). 
Specifically, the notation \tikz{\node[draw, inner sep = 2 pt] (v) at (0,0){$v^{*}$}; \node[draw, circle, inner sep = 2 pt] (w) at (1,0) {$w$}; \draw[](v) -- (w) ;} over $\mathbb{Z}_{m}$, where vertex $v^{*}$ is a pinned vertex, indicates that in the lift graph, there is a single vertex labeled $v^{*}$, which is connected to each of the vertices $w^{i}$, $i = 0, \ldots, m-1$. It follows that in a derived graph, when the voltage group is $\mathbb{Z}_{m}$, 
 the degree of a pinned vertex is $m$. 
Figure \ref{fig:pinned vertex} shows an example of a voltage graph with two pinned vertices and the corresponding derived graph over $\mathbb{Z}_{3}$.

\begin{figure}[htbp]
\begin{center}
\subcaptionbox{A voltage graph with two pinned vertices}[.3\linewidth]{
\begin{tikzpicture}[vtx/.style={draw, circle, inner sep = 1 pt, font = \scriptsize},lbl/.style = {midway, fill = white, inner sep = 2 pt, font=\small}, scale = .8, every node/.append style={transform shape}]]
\node[draw] (yy) at (0,0) {$y^{*}$};
\node[vtx, white,fill = black] (y) at (0,1) {$y$};
\node[vtx, white,fill = cyan] (y0) at (1,1.75) {$y_{0}$};
\node[vtx, white, fill = red] (x0) at (1,3.25) {$x_{0}$};
\node[vtx, fill = green] (x) at (0,4) {$x$};
\node[draw, fill = gray!50] (xx) at (0,5) {$x^{*}$};
\draw[] (x) to[bend right = 20] (y);
\draw[dashed] (xx) to (x);
\draw[dotted, ultra thick] (yy) to (y);
\draw[] (y) to (y0);
\draw[] (x) to (x0);
\draw[-latex, red] (x0) to[bend left = 30] node[lbl]{1} (y0);
\draw[-latex, cyan] (x0) to[bend right = 30] node[lbl]{2} (y0);
\path node[below right = 6 pt and 12 pt of y] {$\mathbb{Z}_{m}$};
\end{tikzpicture}
}
\hfill
\subcaptionbox{The $\mathbb{Z}_{3}$ lift...}[.3\linewidth]{
\begin{tikzpicture}[vtx/.style={draw, circle, inner sep = 1 pt, font = \scriptsize},lbl/.style = {midway, fill = white, inner sep = 2 pt, font=\scriptsize}, scale = .8, every node/.append style={transform shape}]
\def\r{2}
\node[draw] (yy) at (\r*1,0) {$y^{*}$};
\node[draw, fill = gray!50] (xx) at (\r*1,5) {$x^{*}$};
\foreach \j in {0,1,2}{
\node[vtx, white, fill = black, inner sep= 1 pt] (y\j) at (0+\r*\j,1) {$y^{\j}$};
\node[vtx, white, fill = cyan,inner sep= 1 pt] (y0\j) at (1+\r*\j,1.75) {$y_{0}^{\j}$};
\node[vtx, white, fill = red,inner sep= 1 pt] (x0\j) at (1+\r*\j,3.25) {$x_{0}^{\j}$};
\node[vtx, fill = green,inner sep= 1 pt] (x\j) at (0+\r*\j,4) {$x^{\j}$};
}

%
\foreach \j in {0,1,2}{
\draw[dashed, thick] (xx) to (x\j);
\draw[dotted, ultra thick] (yy) to (y\j);
\draw[thick] (x\j) to[bend right = 10]   (y\j);
\draw[thick] (y\j) to (y0\j);
\draw[thick] (x\j) to (x0\j);
\draw[thick,red] let \n1 = {int(mod(\j+1,3))} in (x0\j) to[bend left = 0] (y0\n1);
\draw[thick,cyan] let \n1 = {int(mod(\j+2,3))} in (x0\j) to[bend right = 0] (y0\n1);
}
\end{tikzpicture}
}
\hfill
\subcaptionbox{...is isomorphic to the Heawood graph.}[.3\linewidth]{
\begin{tikzpicture}[vtx/.style={draw, circle, inner sep = 1 pt, font=\scriptsize}, scale = .8, every node/.append style={transform shape}]
\def\r{2.2}
\node[vtx, white, fill=red] (x00) at (360*0/14:\r){$x_{0}^{0}$};
\node[vtx, white,fill=cyan] (y02) at (360*1/14:\r){$y_{0}^{2}$};
\node[vtx, white, fill=black] (y2) at (360*2/14:\r){$y^{2}$};
\node[vtx, fill = green] (x2) at (360*3/14:\r){$x^{2}$};
\node[draw, fill = gray!50] (xx) at (360*4/14:\r){$x^{*}$};
\node[vtx, fill = green] (x0) at (360*5/14:\r){$x^{0}$};
\node[vtx, white, fill = black] (y0) at (360*6/14:\r){$y^{0}$};
\node[draw] (yy) at (360*7/14:\r){$y^{*}$};
\node[vtx, white, fill = black] (y1) at (360*8/14:\r){$y^{1}$};
\node[vtx, fill = green] (x1) at (360*9/14:\r){$x^{1}$};
\node[vtx, white, fill = red] (x01) at (360*10/14:\r){$x_{0}^{1}$};
\node[vtx, white,fill = cyan] (y00) at (360*11/14:\r){$y_{0}^{0}$};
\node[vtx, white,fill = red] (x02) at (360*12/14:\r){$x_{0}^{2}$};
\node[vtx,white, fill = cyan] (y01) at (360*13/14:\r){$y_{0}^{1}$};
\foreach \j in {0,1,2}{
\draw[dashed, thick] (xx) to (x\j);
\draw[dotted, ultra thick] (yy) to (y\j);
\draw[thick] (x\j) to[bend right = 00]   (y\j);
\draw[thick] (y\j) to (y0\j);
\draw[thick] (x\j) to (x0\j);
\draw[thick,red] let \n1 = {int(mod(\j+1,3))} in (x0\j) to[bend left = 0] (y0\n1);
\draw[thick,cyan] let \n1 = {int(mod(\j+2,3))} in (x0\j) to[bend right = 0] (y0\n1);
}
\end{tikzpicture}
}
\caption{An example of a voltage graph with pinned vertices. In this and subsequent figures,  each unpinned vertex $v$ in the voltage graph becomes the collection of vertices $v^{0}, v^{1}, \ldots, v^{m-1}$ in the lift, whereas pinned vertices $x^{*}$ and $y^{*}$ lift to single vertices.}
\label{fig:pinned vertex}
\end{center}
\end{figure}
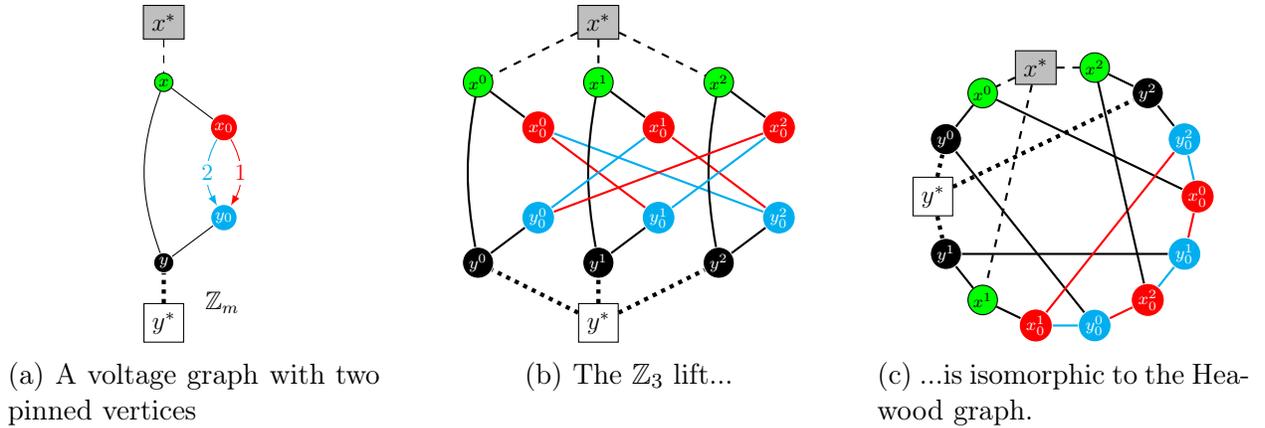

\begin{remark} Given a voltage graph $G$ with voltage group $\mathbb{Z}_{m}$ with a collection of pinned vertices $v^{*}_{1}, \ldots v^{*}_{s}$ in which all vertices are degree $r$ except the pinned vertices, which are of degree 1, the derived graph $G_{m}$ is a $(r, m)$-biregular graph with $s$ vertices of degree $m$.\end{remark}

In what follows, we analyze the girth of graphs derived from voltage graphs with pinned vertices but no loops. We begin with the following well-known result (see, e.g., \cite[\S 2.1.3 - 4]{GT87}).  A \emph{non-reversing closed walk} in a voltage graph is a closed walk in which no edge is repeated consecutively, although the same edge or vertex might be traversed more than once later in the walk. Note that constructing the (directed) walk in the voltage graph may require reversing some of the displayed arrows in the voltage graph (and consequently negating the labels) in order to have the walk have a consistent direction. 
For example, the red walk in Figure \ref{fig:4cyclesAtVtx} repeats a vertex, and the walk in Figure \ref{fig:4cyclesJoinEdge} repeats an edge, but not consecutively.

\begin{lemma}\label{nonreversing} If the sum of the labels in a  non-reversing closed walk $W$ in a voltage graph with voltage group $\mathbb{Z}_{m}$ is congruent to $0 \bmod m$, and no smaller sub-walk  has voltage sum congruent to $0 \bmod m$, then the lift of $W$ forms a cycle.\end{lemma}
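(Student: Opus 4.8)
The plan is to translate the purely combinatorial statement about lifts of voltage graphs into the standard correspondence between closed walks in $G$ and walks in the derived graph $G_m$. First I would set up the bookkeeping: write the non-reversing closed walk as $W = e_1 e_2 \cdots e_\ell$, where consecutive edges share a vertex in $G$, and record the (possibly negated) labels $a_1, \ldots, a_\ell \in \mathbb{Z}_m$ obtained after orienting every edge consistently along the direction of travel. I would recall the fundamental lifting fact (cited as \cite[\S 2.1.3--4]{GT87}): a walk in $G$ starting at vertex $v$ lifts, for each choice of starting fiber element $v^{j}$, to a unique walk in $G_m$, and if $W$ passes through the sequence of vertices $u_0, u_1, \ldots, u_\ell = u_0$ in $G$, then its lift starting at $u_0^{j}$ visits $u_1^{j + a_1}, u_2^{j + a_1 + a_2}, \ldots$, ending at $u_0^{j + (a_1 + \cdots + a_\ell)}$. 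Thus the lifted walk is closed precisely when the partial voltage sum over the whole walk is $0 \bmod m$, which is exactly the first hypothesis.

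Next I would argue that the lifted closed walk is in fact a cycle, i.e. it visits $\ell$ distinct vertices with no repeats until it returns to the start. The key idea is that a repeated vertex in the lift would force a proper sub-walk of $W$ whose voltage sum is $0 \bmod m$, contradicting the minimality hypothesis. Concretely, suppose the lift visits $u_i^{\,j + s_i}$ and $u_{i'}^{\,j + s_{i'}}$ with $i < i'$ and these two lifted vertices equal, where $s_i = a_1 + \cdots + a_i$ denotes the $i$th partial sum. Equality of lifted vertices means $u_i = u_{i'}$ in $G$ and $s_i \equiv s_{i'} \bmod m$. Then $s_{i'} - s_i = a_{i+1} + \cdots + a_{i'} \equiv 0 \bmod m$, and the segment $e_{i+1} \cdots e_{i'}$ is a closed sub-walk of $W$ (since it returns to the vertex $u_i = u_{i'}$ in $G$) whose voltage sum vanishes. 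I would need to confirm this segment is a genuine nontrivial sub-walk and not all of $W$: it is proper because $0 \le i < i' \le \ell$ with not both endpoints the full range, and it is nonempty because $i < i'$; the only delicate case is $i = 0, i' = \ell$, which corresponds to the whole walk and is excluded. This gives the required contradiction with the ``no smaller sub-walk has voltage sum $0$'' condition.

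The main obstacle I anticipate is handling the non-reversing condition correctly when reasoning about whether the lifted closed walk is a cycle in the graph-theoretic sense, namely that \emph{edges} are also distinct, not merely vertices. Two lifted vertices being distinct rules out a repeated edge unless two distinct edges of $W$ happen to lift to the same edge between the same pair of fiber vertices; I would argue this cannot create a chord-free failure because the non-reversing hypothesis forbids immediately backtracking along an edge, and any later re-traversal of the same oriented edge would again produce a vanishing voltage sub-walk via the partial-sum argument above. I would therefore fold the edge-distinctness check into the same minimality/partial-sum framework rather than treating it separately. Finally I would remark that the hypotheses guarantee the lift of $W$ is a \emph{connected} closed walk of length $\ell$ with all distinct vertices and edges, which is the definition of a cycle, completing the proof. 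I expect the vertex-repetition-implies-short-subwalk step to be the conceptual heart, and the edge-distinctness and boundary-index cases to be the fiddly details one must not skip.
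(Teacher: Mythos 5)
Your proposal is correct and follows essentially the same route as the paper: the lift is closed because the total voltage sum vanishes, and any self-intersection of the lifted walk at a fiber vertex would force a proper sub-walk of $W$ with voltage sum $0 \bmod m$, contradicting the minimality hypothesis. Your extra bookkeeping with partial sums $s_i$ and the explicit treatment of the boundary case $i=0,\ i'=\ell$ and of edge-distinctness just fills in details the paper's terser proof leaves implicit.
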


\begin{proof} This follows from \cite[\S 2.1.3 - 4]{GT87}. It is clear that the lift is a closed walk, since the starting and ending vertices in $G_{m}$ have the same index. To see that it is a cycle, suppose we begin the walk $W$ at some vertex $v$, and suppose the lift walk had also intersected itself at some vertex $w^{j}$ (in $G_{m}$). That would mean that in the voltage graph, the vertex $w$ was used twice in the voltage walk, and moreover, the voltages along the part of the walk between those encounters with $w$ summed to 0. This contradicts the assumption that no sub-walk had net voltage 0.   
\end{proof}


Of particular utility to us is that if the sum  $s$ of the labels along a cycle in the voltage graph divides $m$, so that $m = s d$, then the closed walk formed by going $d$ times around the cycle in the voltage graph lifts to a cycle in the derived graph. However, there are other closed walks in the voltage graph that can also lift to short cycles, which we discuss in subsection \ref{sec:closedWalks}.

When we are considering voltage graphs with pinned vertices, however, there are additional cycles in the lift graph which pass through the pinned vertices that do not come from lifting closed non-reversing walks in the voltage graph whose voltages sum to 0.
See Figure \ref{fig:cycleLifts} 
for a collection of specific examples, and Figures \ref{fig:genericPath} and \ref{fig:lollipop} for general schematics.

\begin{figure}[htbp]
\begin{center}

\subcaptionbox{A 4-cycle in a voltage graph whose labels sum to 1. }[.23\linewidth]{
\begin{tikzpicture}[lbl/.style = {midway, fill = white, inner sep = 2 pt, font=\footnotesize}, scale = .8, every node/.append style={transform shape}]
\node[draw] (yy) at (0,0) {};
\node[draw, circle, fill = black] (y) at (0,1) {};
\node[draw, circle, fill = cyan] (y0) at (1,1.75) {};
\node[draw, circle, fill = red] (x0) at (1,3.25) {};
\node[draw, circle, fill = green] (x) at (0,4) {};
\node[draw, fill = gray!50] (xx) at (0,5) {};
\draw[] (x) to[bend right = 20] (y);
\draw[dashed] (xx) to (x);
\draw[dashed] (yy) to (y);
\draw[] (y) to (y0);
\draw[] (x) to (x0);
\draw[-latex, red] (x0) to[bend left = 30] node[lbl]{1} (y0);
\draw[-latex, cyan] (x0) to[bend right = 30] node[lbl]{2} (y0);
\path node[below right = 6 pt and 12 pt of y] {$\mathbb{Z}_{m}$};

\begin{scope}[on background layer]
\draw[line width = 5 pt, opacity = .5, yellow] (y) to[bend left = 20] (x.center) -- (x0) to[bend left = 30] (y0.center) -- (y);
\end{scope}
\end{tikzpicture}
}
\hfill
\subcaptionbox{A lollipop 4-cycle involving a pinned vertex.}[.23\linewidth]{
\begin{tikzpicture}[lbl/.style = {midway, fill = white, inner sep = 2 pt, font=\footnotesize}, scale = .8, every node/.append style={transform shape}]
\node[draw] (yy) at (0,0) {};
\node[draw, circle, fill = black] (y) at (0,1) {};
\node[draw, circle, fill = cyan] (y0) at (1,1.75) {};
\node[draw, circle, fill = red] (x0) at (1,3.25) {};
\node[draw, circle, fill = green] (x) at (0,4) {};
\node[draw, fill = gray!50] (xx) at (0,5) {};
\draw[] (x) to[bend right = 20] (y);
\draw[dashed] (xx) to (x);
\draw[dashed] (yy) to (y);
\draw[] (y) to (y0);
\draw[] (x) to (x0);
\draw[-latex, red] (x0) to[bend left = 30] node[lbl]{1} (y0);
\draw[-latex, cyan] (x0) to[bend right = 30] node[lbl]{2} (y0);
\path node[below right = 6 pt and 12 pt of y] {$\mathbb{Z}_{m}$};

\begin{scope}[on background layer]
\draw[line width = 5 pt, opacity = .5, yellow] (y) to[bend left = 20] (x.center) -- (x0) to[bend left = 30] (y0.center) -- (y);
\draw[line width = 5 pt, opacity = .5, yellow] (x) -- (xx);
\end{scope}
\end{tikzpicture}
}
\hfill
\subcaptionbox{A lollipop walk with a path of length 2 and a cycle of length 2.}[.23\linewidth]{
\begin{tikzpicture}[vtx/.style={draw, circle, inner sep = 1 pt, font = \tiny},
lbl/.style = {midway, fill = white, inner sep = 2 pt, font=\footnotesize}, scale = .8, every node/.append style={transform shape}]
\node[draw] (yy) at (0,0) {};
\node[draw, circle, fill = black] (y) at (0,1) {};
\node[draw, circle, fill = cyan] (y0) at (1,1.75) {};
\node[draw, circle, fill = red] (x0) at (1,3.25) {};
\node[draw, circle, fill = green] (x) at (0,4) {};
\node[draw, fill = gray!50] (xx) at (0,5) {};
\draw[] (x) to[bend right = 20] (y);
\draw[dashed] (xx) to (x);
\draw[dashed] (yy) to (y);
\draw[] (y) to (y0);
\draw[] (x) to (x0);
\draw[-latex, red] (x0) to[bend left = 30] node[lbl]{1} (y0);
\draw[-latex, cyan] (x0) to[bend right = 30] node[lbl]{2} (y0);
\path node[below right = 6 pt and 12 pt of y] {$\mathbb{Z}_{m}$};


\begin{scope}[on background layer]
\draw[line width = 5 pt, opacity = .5, yellow] (xx) -- (x) -- (x0) to[bend left = 30] (y0.center) to[bend left = 30] (x0.center) -- (x) -- (xx);

\end{scope}
\end{tikzpicture}
}
\hfill
\subcaptionbox{A path of length 3 between two pinned vertices.}
[.2\linewidth]{
\begin{tikzpicture}[vtx/.style={draw, circle, inner sep = 1 pt, font = \tiny},lbl/.style = {midway, fill = white, inner sep = 2 pt, font=\footnotesize}, scale = .8, every node/.append style={transform shape}]
\node[draw] (yy) at (0,0) {};
\node[draw, circle, fill = black] (y) at (0,1) {};
\node[draw, circle, fill = cyan] (y0) at (1,1.75) {};
\node[draw, circle, fill = red] (x0) at (1,3.25) {};
\node[draw, circle, fill = green] (x) at (0,4) {};
\node[draw, fill = gray!50] (xx) at (0,5) {};
\draw[] (x) to[bend right = 20] (y);
\draw[dashed] (xx) to (x);
\draw[dashed] (yy) to (y);
\draw[] (y) to (y0);
\draw[] (x) to (x0);
\draw[-latex, red] (x0) to[bend left = 30] node[lbl]{1} (y0);
\draw[-latex, cyan] (x0) to[bend right = 30] node[lbl]{2} (y0);
\path node[below right = 6 pt and 12 pt of y] {$\mathbb{Z}_{m}$};

\begin{scope}[on background layer]
\draw[line width = 5 pt, opacity = .5, yellow] (xx) -- (x) to[bend right = 20] (y.center) -- (yy);
\end{scope}
\end{tikzpicture}
}

\subcaptionbox{Three times around the 4-cycle lifts to  a 12-cycle.}[.23\linewidth]{
\begin{tikzpicture}[lbl/.style = {midway, fill = white, inner sep = 2 pt, font=\tiny}, scale = .8, every node/.append style={transform shape, font = \scriptsize}]
\def\r{1.5}
\node[draw] (yy) at (\r*1,0) {};
\node[draw, fill = gray!50] (xx) at (\r*1,5) {};
\foreach \j in {0,1,2}{
\node[draw, circle, white, fill = black, inner sep= 1 pt] (y\j) at (0+\r*\j,1) {\j};
\node[draw, circle, fill = cyan,inner sep= 1 pt] (y0\j) at (1+\r*\j,1.75) {\j};
\node[draw, circle, white, fill = red,inner sep= 1 pt] (x0\j) at (1+\r*\j,3.25) {\j};
\node[draw, circle, fill = green,inner sep= 1 pt] (x\j) at (0+\r*\j,4) {\j};
}

%
\foreach \j in {0,1,2}{
\draw[dashed, thick] (xx) to (x\j);
\draw[dashed, thick] (yy) to (y\j);
\draw[thick] (x\j) to[bend right = 10]   (y\j);
\draw[thick] (y\j) to (y0\j);
\draw[thick] (x\j) to (x0\j);
\draw[thick,red] let \n1 = {int(mod(\j+1,3))} in (x0\j) to[bend left = 0] (y0\n1);
\draw[thick,cyan] let \n1 = {int(mod(\j+2,3))} in (x0\j) to[bend right = 0] (y0\n1);
}

\begin{scope}[on background layer]
\foreach \j in {0,1,2}{
\draw[line width = 5 pt, opacity = .5, yellow] (y\j) to[bend left = 10] (x\j.center);
\draw[line width = 5 pt, opacity = .5, yellow] (x\j) -- (x0\j);
\draw[line width = 5 pt, opacity = .5, yellow] let \n1 = {int(mod(\j+1,3))} in  (x0\j) -- (y0\n1);
\draw[line width = 5 pt, opacity = .5, yellow] (y0\j) -- (y\j);
}

\end{scope}
\end{tikzpicture}
}
%
%
\hfill
\subcaptionbox{The lift of the lollipop 4-cycle.}[.23\linewidth]{
\begin{tikzpicture}[lbl/.style = {midway, fill = white, inner sep = 2 pt, font=\tiny}, scale = .8, every node/.append style={transform shape, font = \scriptsize}]
\def\r{1.5}
\node[draw] (yy) at (\r*1,0) {};
\node[draw, fill = gray!50] (xx) at (\r*1,5) {};
\foreach \j in {0,1,2}{
\node[draw, circle, white, fill = black, inner sep= 1 pt] (y\j) at (0+\r*\j,1) {\j};
\node[draw, circle, fill = cyan,inner sep= 1 pt] (y0\j) at (1+\r*\j,1.75) {\j};
\node[draw, circle, white, fill = red,inner sep= 1 pt] (x0\j) at (1+\r*\j,3.25) {\j};
\node[draw, circle, fill = green,inner sep= 1 pt] (x\j) at (0+\r*\j,4) {\j};
}

%
\foreach \j in {0,1,2}{
\draw[dashed, thick] (xx) to (x\j);
\draw[dashed, thick] (yy) to (y\j);
\draw[thick] (x\j) to[bend right = 10]   (y\j);
\draw[thick] (y\j) to (y0\j);
\draw[thick] (x\j) to (x0\j);
\draw[thick,red] let \n1 = {int(mod(\j+1,3))} in (x0\j) to[bend left = 0] (y0\n1);
\draw[thick,cyan] let \n1 = {int(mod(\j+2,3))} in (x0\j) to[bend right = 0] (y0\n1);
}

\begin{scope}[on background layer]

\draw[line width = 5 pt, opacity = .5, yellow] (xx) -- (x0) -- (x00) -- (y01) -- (y1) to[bend left = 10] (x1.center) -- (xx);
\end{scope}

\end{tikzpicture}
}
\hfill
\subcaptionbox{The lift of the lollipop with a path of length 2 and a 2-cycle.}[.23\linewidth]{
\begin{tikzpicture}[lbl/.style = {midway, fill = white, inner sep = 2 pt, font=\tiny}, scale = .8, every node/.append style={transform shape, font = \scriptsize}]
\def\r{1.5}
\node[draw] (yy) at (\r*1,0) {};
\node[draw, fill = gray!50] (xx) at (\r*1,5) {};
\foreach \j in {0,1,2}{
\node[draw, circle, white, fill = black, inner sep= 1 pt] (y\j) at (0+\r*\j,1) {\j};
\node[draw, circle, fill = cyan,inner sep= 1 pt] (y0\j) at (1+\r*\j,1.75) {\j};
\node[draw, circle, white, fill = red,inner sep= 1 pt] (x0\j) at (1+\r*\j,3.25) {\j};
\node[draw, circle, fill = green,inner sep= 1 pt] (x\j) at (0+\r*\j,4) {\j};
}

%
\foreach \j in {0,1,2}{
\draw[dashed, thick] (xx) to (x\j);
\draw[dashed, thick] (yy) to (y\j);
\draw[thick] (x\j) to[bend right = 10]   (y\j);
\draw[thick] (y\j) to (y0\j);
\draw[thick] (x\j) to (x0\j);
\draw[thick,red] let \n1 = {int(mod(\j+1,3))} in (x0\j) to[bend left = 0] (y0\n1);
\draw[thick,cyan] let \n1 = {int(mod(\j+2,3))} in (x0\j) to[bend right = 0] (y0\n1);
}

\begin{scope}[on background layer]

\draw[line width = 5 pt, opacity = .5, yellow] (xx) -- (x0) -- (x00) -- (y01) -- (x02) --(x2) -- (xx);
\end{scope}

\end{tikzpicture}
}
\hfill
\subcaptionbox{Lifting a path of length 3 between two pinned vertices.}[.23\linewidth]{
\begin{tikzpicture}[lbl/.style = {midway, fill = white, inner sep = 2 pt, font=\tiny}, scale = .8, every node/.append style={transform shape, font = \scriptsize}]
\def\r{1.5}
\node[draw] (yy) at (\r*1,0) {};
\node[draw, fill = gray!50] (xx) at (\r*1,5) {};
\foreach \j in {0,1,2}{
\node[draw, circle, white, fill = black, inner sep= 1 pt] (y\j) at (0+\r*\j,1) {\j};
\node[draw, circle, fill = cyan,inner sep= 1 pt] (y0\j) at (1+\r*\j,1.75) {\j};
\node[draw, circle, white, fill = red,inner sep= 1 pt] (x0\j) at (1+\r*\j,3.25) {\j};
\node[draw, circle, fill = green,inner sep= 1 pt] (x\j) at (0+\r*\j,4) {\j};
}

%
\foreach \j in {0,1,2}{
\draw[dashed, thick] (xx) to (x\j);
\draw[dashed, thick] (yy) to (y\j);
\draw[thick] (x\j) to[bend right = 10]   (y\j);
\draw[thick] (y\j) to (y0\j);
\draw[thick] (x\j) to (x0\j);
\draw[thick,red] let \n1 = {int(mod(\j+1,3))} in (x0\j) to[bend left = 0] (y0\n1);
\draw[thick,cyan] let \n1 = {int(mod(\j+2,3))} in (x0\j) to[bend right = 0] (y0\n1);
}

\begin{scope}[on background layer]

\draw[line width = 5 pt, opacity = .5, yellow] (xx) -- (x0) to[bend right = 10] (y0.center) -- (yy) -- (y1) to[bend left = 10] (x1.center) -- (xx);

\end{scope}

\end{tikzpicture}
}

\caption{Examples of various walks in a voltage graph and their $\mathbb{Z}_{3}$ lifts. Note that traveling forward by 2 is the same as traveling backwrds by 1 when $m = 3$.}
\label{fig:cycleLifts}
\end{center}
\end{figure}
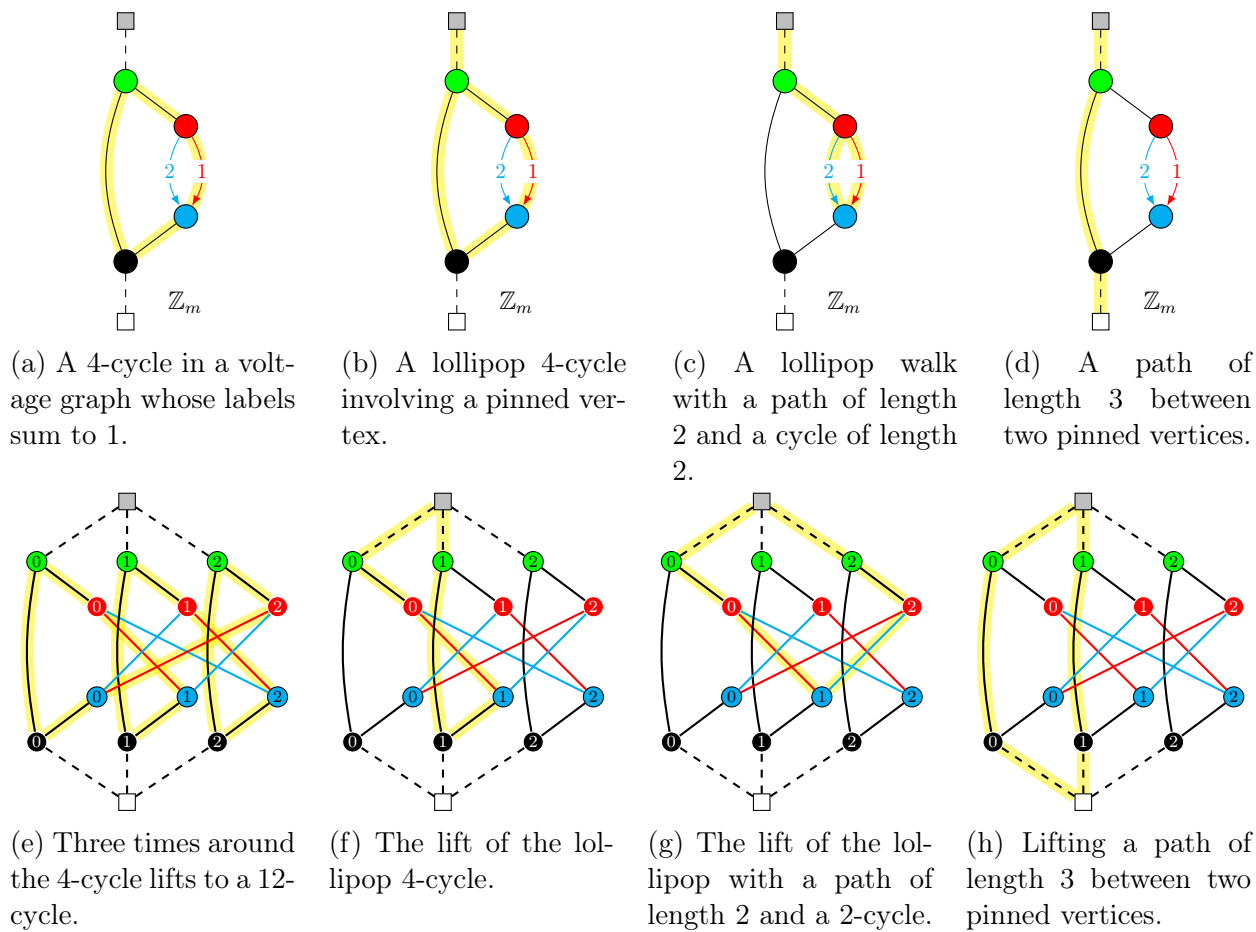

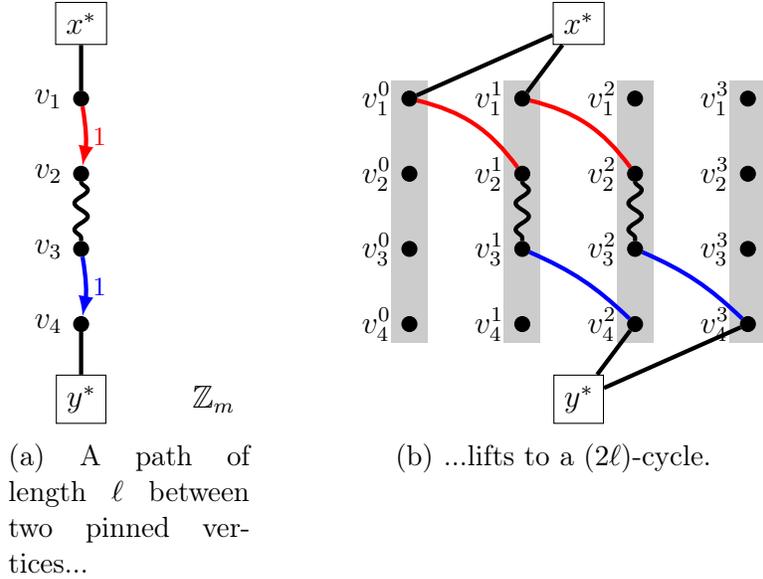
\begin{figure}[htbp]
\begin{center}
\subfloat[A path of length $\ell$ between two pinned vertices... ]{
\begin{tikzpicture}[lbl/.style = {midway, 
inner sep = 2 pt, font=\footnotesize}, vtx/.style = {draw, circle, fill = black, inner sep = 2 pt}, decoration = snake]
\node[draw,] (xx) at (0,6){$x^{*}$};
\node[vtx, 
label = left:$v_{1}$] (v1) at (0,5) {};
\node[vtx,
label = left:$v_{2}$] (v2) at (0,4){};
\node[vtx, 
label = left:$v_{3}$] (v3) at (0,3) {};
\node[vtx, 
label = left:$v_{4}$] (v4) at (0,2) {};
\node[draw, 
] (yy) at (0,1) {$y^{*}$};

\draw[,ultra thick] (xx) -- (v1);
\draw[-latex, ultra thick, red] (v1) to[bend left = 10]node[lbl, right]{$1$} (v2);
\draw[decorate,ultra thick] (v2) -- (v3);
\draw[-latex,  ultra thick, blue] (v3) to[bend left = 10]node[lbl, right]{$1$} (v4);
\draw[, ultra thick] (v4) -- (yy);

\node[right = of yy] {$\mathbb{Z}_{m}$};

\end{tikzpicture}
}
\hspace{1cm}
\subfloat[...lifts to a ($2 \ell$)-cycle.]{
\begin{tikzpicture}[lbl/.style = {midway, fill = white, inner sep = 2 pt, font=\footnotesize},vtx/.style = {draw, circle, fill = black, inner sep = 2 pt}, decoration = snake]
\def\r{1.5}
\node[draw] (yy) at (\r*3/2,1) {$y^{*}$};
\node[draw, ] (xx) at (\r*3/2,6) {$x^{*}$};
\foreach \j/\k in {0/0,1/1,2/2,3/3}{
\node[vtx,  label = left:$v_{1}^{\k}$] (v1\j) at (0+\r*\j,5) {};
\node[vtx,  label = left:$v_{2}^{\k}$] (v2\j) at (0+\r*\j,4) {};
\node[vtx,  label = left:$v_{3}^{\k}$] (v3\j) at (0+\r*\j,3) {};
\node[vtx,  label = left:$v_{4}^{\k}$] (v4\j) at (0+\r*\j,2) {};

}

%

\draw[, ultra thick] (xx) -- (v10);
\draw[, ultra thick] (xx) -- (v11);
\draw[, red,  ultra thick] (v10) to[bend left = 20]
(v21);
\draw[, red,  ultra thick] (v11) to[bend left = 20]
(v22);
\draw[decorate,ultra thick] (v21) -- (v31);
\draw[decorate,ultra thick] (v22) -- (v32);
\draw[, blue,  ultra thick] (v31) to[bend left = 10]
(v42);
\draw[, blue,  ultra thick] (v32) to[bend left = 10]
(v43);
\draw[,ultra thick] (v42) -- (yy);
\draw[,ultra thick] (v43) -- (yy);

\begin{scope}[on background layer]
\foreach \j in {0,1,2,3}
\node [fill=black!20,fit=(v1\j)(v2\j)(v3\j)(v4\j)]{};
\end{scope}
\end{tikzpicture}
}

\caption{A schematic illustrating that a path in $G$ of length $\ell$ between pinned vertices lifts to a cycle in $G_{m}$ of length $2 \ell$.}
\label{fig:genericPath}
\end{center}
\end{figure}

\begin{lemma}\label{lemma:pinnedwalks} Let $P$ be a path of length $\ell$ in a voltage graph $G$ that begins at a pinned vertex $x^{*}$ and ends at a (different) pinned vertex $y^{*}$. 
Then the lift graph contains a cycle of length $2 \ell$ passing through $x^{*}$ and $y^{*}$.  
\end{lemma}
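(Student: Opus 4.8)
The plan is to build the $2\ell$-cycle explicitly in the lift $G_{m}$, exploiting the defining feature of a pinned vertex: it is joined to \emph{every} copy of its unique neighbor. First I would write the path as $x^{*} = v_{0}, v_{1}, \ldots, v_{\ell-1}, v_{\ell} = y^{*}$, where (since $\ell \ge 2$) the interior vertices $v_{1}, \ldots, v_{\ell-1}$ are ordinary, unpinned, and pairwise distinct because $P$ is a path. The two end edges $x^{*}v_{1}$ and $v_{\ell-1}y^{*}$ are pinned edges, while each interior edge $v_{i}v_{i+1}$ carries a voltage $a_{i} \in \mathbb{Z}_{m}$, oriented from $v_{i}$ to $v_{i+1}$, for $i = 1, \ldots, \ell-2$. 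I set $S = a_{1} + \cdots + a_{\ell-2}$ to be the total voltage along the interior of $P$.

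Next I would traverse the lifted path twice, on two different sheets of the covering. Starting from $x^{*}$, enter the copy $v_{1}^{0}$ and follow the lifts of the interior edges, landing on $v_{i}^{\,a_{1} + \cdots + a_{i-1}}$ at each step and finishing at $v_{\ell-1}^{S}$; since $y^{*}$ is joined to every copy of $v_{\ell-1}$, the edge $v_{\ell-1}^{S}y^{*}$ is present, completing a path from $x^{*}$ to $y^{*}$ of length $\ell$. To close this into a cycle, leave $y^{*}$ through a \emph{different} copy $v_{\ell-1}^{S+c}$ with $c \neq 0$, retrace the interior edges in reverse, arriving at $v_{1}^{c}$, and then use the pinned edge $v_{1}^{c}x^{*}$ to return to $x^{*}$. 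Counting the edges gives $1 + (\ell-2) + 1 + 1 + (\ell-2) + 1 = 2\ell$.

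The step I expect to require the most care is checking that this closed walk is a genuine (simple) cycle. The vertices $x^{*}$ and $y^{*}$ each appear once by construction. For the interior, the two traversals place $v_{i}$ at indices $a_{1}+\cdots+a_{i-1}$ and $a_{1}+\cdots+a_{i-1}+c$ respectively, differing by exactly $c$, so the fiber elements used on the two sheets are distinct whenever $c \not\equiv 0 \pmod{m}$ — which is possible precisely because $m \geq 2$. Since distinct path positions $i \neq j$ give distinct voltage-graph vertices $v_{i} \neq v_{j}$, and hence disjoint fibers in $G_{m}$, no two of the $2\ell$ vertices coincide. Thus the walk is a cycle of length $2\ell$ through both pinned vertices, and the choice $c \neq 0$ also guarantees that the two cycle edges at $x^{*}$ (and at $y^{*}$) are distinct.

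Finally, I would note \emph{why} one cannot simply invoke Lemma~\ref{nonreversing}: the underlying closed walk in $G$ runs forward along $P$ and then immediately back, reversing at $y^{*}$ and at $x^{*}$, so it is not a non-reversing closed walk, and its interior voltages cancel to $0$ only trivially. What rescues the construction is exactly the pinned structure — because $y^{*}$ (respectively $x^{*}$) attaches to all $m$ copies of its neighbor, a reversal in $G$ lifts to a move into a \emph{different} fiber element rather than a backtrack. This is the mechanism producing the ``extra'' cycles through pinned vertices alluded to earlier, and it is what makes the explicit two-sheet argument, rather than an appeal to Lemma~\ref{nonreversing}, the natural route.
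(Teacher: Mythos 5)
Your proof is correct and takes essentially the same route as the paper's: both construct two lifts of the interior subpath shifted by a nonzero element of $\mathbb{Z}_{m}$ (the paper uses the shift $1$, you allow a general $c \neq 0$), observe the lifts are disjoint, and join them through the two pinned vertices to obtain the $2\ell$-cycle. Your additional verification of simplicity and your remark on why Lemma~\ref{nonreversing} does not apply are welcome elaborations of what the paper leaves implicit.
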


\begin{proof} 
Assume that in $P$, $x^{*}$ is adjacent to $x$ and $y^{*}$ is adjacent to $y$, and let $P'$ be the subpath of $P$ in $G$ that starts at $x$ and ends at $y$. Construct two lifts of path $P'$: let $(P')^{0}$  start at $x^{0}$ and end at some $y^{j}$; then $(P')^{1}$ will start at $x^{1}$ and end at $y^{j+1}$. By construction, these paths are disjoint. Extending these paths back to $x^{*}$ and $y^{*}$ and joining them results in a cycle of length $2\ell$. (See Figure \ref{fig:genericPath}.)

\end{proof}

\begin{lemma}\label{lollipop} If $H$ is a ``lollipop'' subgraph in a voltage graph $G$ composed of a path of length $p$ from a pinned vertex $x^{*}$ to a non-pinned vertex $v$ and a cycle of length $q$ containing $v$ such that  the sum of the non-zero voltages along the cycle is not congruent to $0 \pmod m$ then $H$ lifts to a cycle of length $2p+q$ in the derived graph $G_{m}$. 
\end{lemma}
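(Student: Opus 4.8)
The plan is to build the required $(2p+q)$-cycle in $G_{m}$ explicitly, by lifting the stick of the lollipop twice and the loop once, and then using the pinned vertex $x^{*}$ to splice the two stick-lifts together. Write the stick as a path $x^{*} = u_{0}, u_{1}, \ldots, u_{p} = v$ and the loop as a cycle $C = w_{0}, w_{1}, \ldots, w_{q} = w_{0}$ with $w_{0} = v$; since $H$ is a lollipop, the only voltage-graph vertex shared by the stick and the loop is $v$. Let $s$ be the voltage sum around $C$, so by hypothesis $s \not\equiv 0 \pmod m$.

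First I would lift the stick $P$ in two ways, exactly as in Lemma \ref{lemma:pinnedwalks}: the lift of $P$ starting at $u_{1}^{0}$ ends at some $v^{i}$, and the lift starting at $u_{1}^{s}$ is the same path shifted throughout by $s$, hence ends at $v^{i+s}$. Because $x^{*}$ is pinned it is adjacent to every copy of $u_{1}$, so both stick-lifts attach to the single vertex $x^{*}$. Then I would lift the loop $C$ starting at $v^{i}$: reading off the voltages, after $q$ steps the lift arrives at $v^{i+s}$, and because $C$ is a genuine cycle in $G$ its interior vertices $w_{1}, \ldots, w_{q-1}$ are pairwise-distinct voltage-graph vertices whose lifts are therefore distinct, while $s \not\equiv 0$ makes the endpoints satisfy $v^{i} \neq v^{i+s}$; hence the loop-lift is a simple path of length $q$. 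Concatenating the first stick-lift ($x^{*} \to v^{i}$), the loop-lift ($v^{i} \to v^{i+s}$), and the second stick-lift ($v^{i+s} \to x^{*}$) yields a closed walk of length $p + q + p = 2p + q$ through $x^{*}$.

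The main thing to verify—and the real content of the lemma—is that this closed walk is a \emph{simple} cycle rather than merely a closed walk, and this is where the hypothesis $s \not\equiv 0 \pmod m$ does all the work, in three places. Writing the first stick-lift as $u_{k}^{c_{k}}$ (with $c_{1} = 0$, $c_{p} = i$) and the second as $u_{k}^{c_{k}+s}$, the two stick-lifts are disjoint precisely because $s \not\equiv 0$ forces $c_{k} \neq c_{k}+s$ for every $k$; the loop-lift is simple by the paragraph above; and the loop-lift meets the stick-lifts only at $v^{i}$ and $v^{i+s}$, since the interior loop-lift vertices lie over $w_{1}, \ldots, w_{q-1}$, which are different voltage-graph vertices from all of $u_{1}, \ldots, u_{p}$ by the lollipop condition. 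It is worth remarking why Lemma \ref{nonreversing} cannot be applied verbatim: the underlying closed walk in $G$ traverses the stick out and back, so it reverses the edge $x^{*}u_{1}$ at $x^{*}$. The pinned vertex is exactly what rescues this, since the two stick-lifts leave $x^{*}$ along the \emph{distinct} edges $x^{*}u_{1}^{0}$ and $x^{*}u_{1}^{s}$, making the lifted walk non-reversing; and it is again $s \not\equiv 0$ that guarantees these two edges are genuinely distinct. A schematic of this splicing, together with worked $\mathbb{Z}_{3}$ examples, appears in Figure \ref{fig:cycleLifts}.
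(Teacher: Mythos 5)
Your proof is correct and follows essentially the same route as the paper's: traverse one lift of the path from $x^{*}$ to $v^{i}$, follow the lift of the cycle to $v^{i+s}$ (using $s \not\equiv 0 \pmod m$ to see the endpoints differ and citing that a cycle lifts to a simple path), and return along the path-lift shifted by $s$, closing up at the pinned vertex. The only difference is one of care, in your favor: where the paper simply asserts the closed walk meets itself only at $x^{*}$, you verify disjointness level-by-level (each $u_{k}^{c_{k}} \neq u_{k}^{c_{k}+s}$) and note explicitly that the lollipop structure keeps the loop-lift's interior off the stick-lifts and that the pinned vertex makes the lifted walk non-reversing at $x^{*}$.
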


\begin{proof}

Suppose the sum of the voltages along the path $(x^{*}, x, \cdots, v)$ equals  $A$ and the sum of the voltages along the cycle equals $B$. In the lift, first travel from $x^{*}$ to $x^{0}$ to $v^{A}$ via the voltage instructions on the path in $G$. Next, travel along the lift of the cycle from $v^{A}$ to $v^{A+B}$, which is different from $v^{A}$ since $B \not\equiv 0 \bmod m$. By \cite[Theorem 2.1.2]{GT87} the lift of this cycle is a path in $G_{m}$. 
Finally,  travel backwards along the path from $v^{A+B}$ to $w^{B}$ (subtracting a total of $A$ voltages) and then to $x^{*}$. Since $B \not\equiv 0 \bmod m$, $x^{B}$ is different from $x^{0}$, so this closed walk does not self-intersect except at $x^{*}$ and thus is a cycle. (See Figure \ref{fig:lollipop}.)
\end{proof}

\begin{figure}[htbp]
\begin{center}
\subfloat[A lollipop with two arcs]{
\begin{tikzpicture}[lbl/.style = {midway, fill = white, inner sep = 2 pt, font=\footnotesize}, vtx/.style = {draw, circle, fill = black, inner sep = 2 pt}, decoration = snake]
\node[draw,] (xx) at (0,6){$x^{*}$};
\node[vtx, 
label = left:$v_{1}$] (v1) at (0,5) {};
\node[vtx,
label = left:$v_{2}$] (v2) at (0,4){};
\node[vtx, 
label = left:$v_{3}$] (v3) at (0,3) {};
\node[vtx, 
label = left:$v_{4}$] (v4) at (-1,2) {};
\node[vtx, label =  right:$v_{5}$
] (v5) at (1,1) {};

\draw[,ultra thick] (xx) -- (v1);
\draw[-latex, ultra thick, red] (v1) to[bend left = 10]node[lbl]{$a$} (v2);
\draw[decorate,ultra thick] (v2) -- (v3);
\draw[ultra thick, green] (v3) -- (v4);
\draw[-latex,  ultra thick, blue] (v4) to[bend right = 10]node[lbl]{$b$} (v5);
\draw[decorate, ultra thick, green] (v5) to[bend right = 50] (v3);

\node[right = of v5] {$\mathbb{Z}_{m}$};

\end{tikzpicture}
}
\hspace{1cm}
\subfloat[...lifts to a ($2p + q$)-cycle, as long as the sum of the arcs on the voltage graph cycle is not congruent to $0 \mod m$.]{
\begin{tikzpicture}[lbl/.style = {midway, fill = white, inner sep = 2 pt, font=\footnotesize},vtx/.style = {draw, circle, fill = black, inner sep = 2 pt}, decoration = snake]
\def\r{1.5}
\node[draw, ] (xx) at (\r*5/2,6) {$x^{*}$};
\foreach \j/\k in {0/0,1/a,2/{a+b}}{
\node[vtx,  label = left:$v_{1}^{\k}$] (v1\j) at (0+\r*\j,5) {};
\node[vtx,  label = left:$v_{2}^{\k}$] (v2\j) at (0+\r*\j,4) {};
\node[vtx,  label = left:$v_{3}^{\k}$] (v3\j) at (0+\r*\j,3) {};
\node[vtx,  label = left:$v_{4}^{\k}$] (v4\j) at (0+\r*\j,2) {};
\node[vtx,  label = left:$v_{5}^{\k}$] (v5\j) at (0+\r*\j,1) {};

}

\foreach \j/\k in {3/{b}}{
\node[vtx,  label = right:$v_{1}^{\k}$] (v1\j) at (0+\r*\j,5) {};
\node[vtx,  label = right:$v_{2}^{\k}$] (v2\j) at (0+\r*\j,4) {};
\node[vtx,  label = right:$v_{3}^{\k}$] (v3\j) at (0+\r*\j,3) {};
\node[vtx,  label = right:$v_{4}^{\k}$] (v4\j) at (0+\r*\j,2) {};
\node[vtx,  label = left:$v_{5}^{\k}$] (v5\j) at (0+\r*\j,1) {};
}
%

\draw[, ultra thick] (xx) -- (v10);
\draw[-latex, red,  ultra thick] (v10) to[bend left = 20]
(v21);
\draw[decorate,ultra thick] (v21) -- (v31);
\draw[ultra thick, green] (v31) -- (v41);
\draw[-latex, blue,  ultra thick] (v41) to[bend left = 20]
(v52);
\draw[decorate,ultra thick, green] (v52) to[bend right = 20] (v32);
\draw[decorate,ultra thick] (v32) -- (v22);
\draw[-latex, red,  ultra thick] (v22) to[bend right = 20]
(v13);
\draw[,ultra thick] (v13) -- (xx);

\begin{scope}[on background layer]
\foreach \j in {0,1,2,3}
\node [fill=black!20,fit=(v1\j)(v2\j)(v3\j)(v4\j)(v5\j)]{};
\end{scope}
\end{tikzpicture}
}

\caption{A schematic illustrating that a lollipop in $\mc G$ with path of length $p$  and cycle of length $q$ between pinned vertices lifts to a cycle of length $2 p + q$.}
\label{fig:lollipop}
\end{center}
\end{figure}
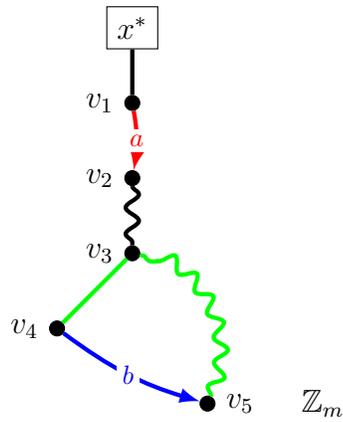
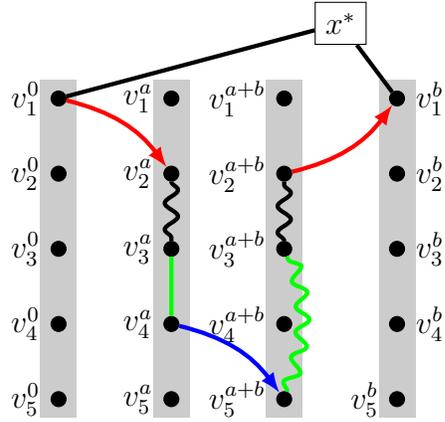

\subsection{Non-cycle closed walks that lift to short cycles\label{sec:closedWalks}}
In subsequent sections, we are interested in graphs of girth 6, 10, 8, and 12 that are formed by adding non-zero labeled arcs to the leaves of trees of particular heights. To analyze the girth, we need to analyze short closed walks in the voltage graph that lift to short cycles. In the previous section, we looked at cycles, lollipop walks, and paths between pinned vertices that lift to cycles. Here, we analyze possible non-cycle closed walks formed by joining cycles whose voltage sums we know. Some example walks are shown in Figure \ref{fig:joinCycles}; the context is that these are assumed to be parts of some voltage graph over $\mathbb{Z}_{m}$.

First, note that joining two short cycles at a vertex result in a short walk around both cycles whose length is the sum of the lengths of the cycles. Figure \ref{fig:4cyclesAtVtx} shows an example walk (following the red arrows) where the sums around the cycles are the same, caused by going forward around the first cycle (in the direction of the arcs in the cycle) and going backwards along the second cycle (opposite the direction of the arcs in the cycle).

Joining two cycles with a path leads to the construction of a non-reversing closed walk formed by traversing the first cycle, going along the path, going around the second cycle, and going back along the path to the starting point (Figure \ref{fig:4cyclesJoinEdge}). If we think about joining two 4-cycles along a shared edge, we can construct a closed walk by going down the shared edge, around the first cycle, back down the shared edge, and around the second cycle. This is a non-reversing walk of length 8 as well.

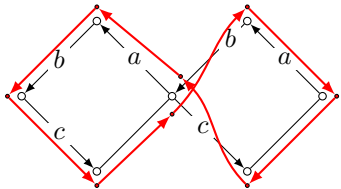
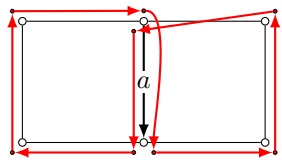
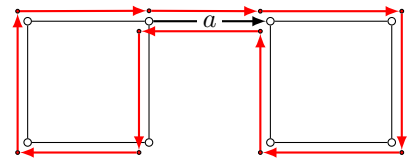
\begin{figure}[htbp]
\begin{center}
\subfloat[Joining two 4-cycles at a vertex\label{fig:4cyclesAtVtx} results in a walk of length 8]{
\begin{tikzpicture}[vtx/.style={draw, circle, inner sep = 1 pt, font = \tiny},lbl/.style = {midway, fill = white, inner sep = 2 pt, font=\footnotesize}, node distance = 1.5cm  ]
\foreach \j in {0,1,2,3}{
\node[vtx] (v\j) at (360*\j/4:1){};
}
\foreach \j in {0,1,3}{
\node[vtx] (w\j) at ($(360*\j/4:1)+(2,0)$){};
}
\foreach \j in {v,w}{
\draw[-latex] (\j0) -- node[lbl]{$a$} (\j1);
\draw[] (\j3) --  (\j0);
}
\draw[-latex] (v1) -- node[lbl]{$b$} (v2);
\draw[-latex] (w1) -- node[lbl, pos = .2]{$b$} (v0);
\draw[-latex] (v2) -- node[lbl,]{$c$} (v3);
\draw[-latex] (v0) -- node[lbl, pos = .4]{$c$} (w3);

\begin{scope}
\tikzset{node distance = .5mm, vtxx/.style={vtx, inner sep = .5pt, fill = red}}
\node[vtxx, above right=2mm and .5 mm of v0] (a)  {};
\node[vtxx,fill = red, above =1 mm of v1] (b){};
\node[vtxx,  left = 1mm of v2](c){};
\node[vtxx, below  = 1mm of v3] (d){};
\node[vtxx, below  = 1.5mm of v0] (e){};
\node[vtxx, below  = 1mm of w3] (f){};
\node[vtxx, right  = 1mm of w0] (g){};
\node[vtxx, above = 1mm of w1] (h){};
\foreach \j/\k in {a/b, b/c, c/d, d/e,h/g, g/f}{
\draw[-latex,red, thick] (\j) -- (\k);
}
\draw[-latex, red, thick] (e) to[out = 45, in = -90-45] (h);
\draw[-latex, red, thick] (f) to[out = 180-45, in = -45, ] (a);
\end{scope}

\end{tikzpicture}
}
\hspace{.5cm}
\subfloat[joining two 4-cycles along an edge/arc results in a walk of length 8\label{fig:4cyclesAtEdge}]{
\begin{tikzpicture}[vtx/.style={draw, circle, inner sep = 1 pt, font = \tiny},lbl/.style = {midway, fill = white, inner sep = 2 pt, font=\footnotesize}, node distance = 1.5cm  ]
\node[vtx] (v1) at (0,0){};
\node[vtx,below = of v1] (v2) {}; 
\node[vtx, right  = of v2] (v3){};
\node[vtx, above = of v3] (v4){};
\node[vtx, right = of v4] (v5){};
\node[vtx, below = of v5] (v6){};
\draw (v1) -- (v2) -- (v3) -- (v6) -- (v5) -- (v4) --(v1);
\draw[thick, -latex] (v4) -- node[lbl]{$a$} (v3);

\begin{scope}
\tikzset{node distance = .5mm, vtxx/.style={vtx, inner sep = .5pt, fill = red}}
\node[vtxx, above=of v4] (a)  {};
\node[vtxx,fill = red, above left=1 mm of v1] (b){};
\node[vtxx, below left = 1mm of v2](c){};
\node[vtxx, below left = 1mm of v3] (d){};
\node[vtxx, below left = 1mm of v4] (e){};
\node[vtxx, above right = 1mm of v5] (f){};
\node[vtxx, below right = 1mm of v6] (g){};
\node[vtxx, below right = 1mm of v3] (h){};
\foreach \j/\k in {e/d,d/c,c/b,b/a,  h/g, g/f, f/e}{
\draw[-latex,red, thick] (\j) -- (\k);
}
\draw[-latex, red, thick] (a) to[out = 0, in = 90, looseness = .5] (h);
\end{scope}

\end{tikzpicture}
}
\hspace{.5cm}
\subfloat[Joining two 4-cycles with a path of length 1 results in a walk of length 10\label{fig:4cyclesJoinEdge}]{

\begin{tikzpicture}[vtx/.style={draw, circle, inner sep = 1 pt, font = \tiny},lbl/.style = {midway, fill = white, inner sep = 2 pt, font=\footnotesize}, node distance = 1.5cm  ]
\node[vtx] (v1) at (0,0){};
\node[vtx,below = of v1] (v2) {}; 
\node[vtx, right  = of v2] (v3){};
\node[vtx, above = of v3] (v4){};
\node[vtx, right = of v4] (v44){};
\node[vtx, right = of v3] (v33){};
\node[vtx, right = of v44] (v5){};
\node[vtx, below = of v5] (v6){};
\draw (v4) -- (v3) -- (v2) -- (v1) -- (v4);
\draw (v44) -- (v5) -- (v6) -- (v33) -- (v44);
\draw[thick, -latex] (v4) -- node[lbl]{$a$} (v44);

\begin{scope}
\tikzset{node distance = .5mm, vtxx/.style={vtx, inner sep = .5pt, fill = red}}
\node[vtxx, above =of v4] (a)  {};
\node[vtxx,fill = red, above left=1 mm of v1] (b){};
\node[vtxx, below left = 1mm of v2](c){};
\node[vtxx, below left = 1mm of v3] (d){};
\node[vtxx, below left = 1mm of v4] (e){};
\node[vtxx, above right = 1mm of v5] (f){};
\node[vtxx, below right = 1mm of v6] (g){};
\node[vtxx, below left = 1mm of v33] (h){};
\node[vtxx, below left = 1mm of v44] (q){};
\node[vtxx, above left = 1mm of v44] (s){};
\foreach \j/\k in {e/d,d/c,c/b,b/a, a/s,s/f, f/g, g/h, h/q}{
\draw[-latex,red, thick] (\j) -- (\k);
}
\draw[-latex, red, thick, ] (q) to[] (e);
\end{scope}

\end{tikzpicture}
}
\caption{Examples of short closed walks formed by traversing joined cycles in various ways.
}
\label{fig:joinCycles}
\end{center}
\end{figure}


There are limited ways to construct non-cycle closed walks of lengths 6, 8 or 10. It suffices to look at the following types of joined cycles; if these types of joined cycles exist in the graph, then the voltage sums are analyzed separately along the walks.
\begin{remark}\label{lem:nonCycleWalks} Non-cycle, non-reversing, non-lollipop closed walks of length 6, 8, or 10 in bipartite graphs have as their underlying structure the following forms:
\bi
\item A 4-cycle and a 2-cycle joined at a vertex may lift to a 6-cycle;
\item A 4-cycle and a 2-cycle joined at a shared edge may lift to a 6-cycle;
\item pairs of cycles of length 4 joined at a vertex may lift to an 8-cycle; 
\item pairs of cycles of length 4 joined at a shared edge may lift to an 8-cycle; 
\item a 4-cycle and a 6-cycle joined at a vertex may lift to a 10-cycle;
\item two 4-cycles joined by a path of length 1 may lift to a 10-cycle; 
\item a 4-cycle and a 6-cycle joined at a shared edge may lift to a 10-cycle; 
\ei
\end{remark}

Other walks formed by going along cycles joined by shared edges or along cycles joined by paths have lengths larger than 10. Note that the voltage graphs we consider later in this work that potentially have girth more than 6 do not have 2-cycles.




 In the rest of the paper we will construct families of semicubic graphs of girth $g\in \{6,8,10,12\}$. To obtain these graphs, we begin by  describing two different families of voltage graphs called $\mathcal{G}_{4t}$, $t\geq 2$, and $\mathcal{G}_{4t+2}$ for $t\geq 1$. We use these graphs for $t\in \{2,3\}$ in the first case and for $t\in\{1,2\}$ in the second case to obtain families of semicubic graphs with girth $g=4t$ and $g=4t+2$. \\

\section{A family of semicubic graph of girth $4t+2$}\label{case4t+2}

In this section we we will use a family of voltage graphs $\mathcal{G}_{4t+2}$ to construct a family of $(\{3;m \};4t+2)$-biregular graphs called  $(\mathcal{G}_{4t+2}; m)$. First of all, we describe the construction of one of the graphs of this family, which we will call $G_{4t+2}$. This voltage graph begins with two trees $X_{t}$ and $Y_{t}$, which are identical except for their name. 
Graph $X_{t}$
begins with a vertex $x^{*}$. We join this vertex to a single vertex $x$ and construct a binary tree from $x$. Vertex $x$ has two children, $x_{0}$ and $x_{1}$; vertex $x_{i}$, $i = 0,1$ has two children $x_{i0}$ and $x_{i1}$, and in general, for each given bit string $b$ of length $\ell$, $1\leq \ell \leq 2t-1$, vertex $x_{b}$ has children $x_{b0}$ and $x_{b1}$. Including $x^{*}$ and $x$, the total height of this tree is $2t$.



Next, we delete all of the left-handed children of vertex $x_{a}$ where $a = \underbrace{0\cdots0}_{t-1}$, that is, all the vertices whose bit strings are of the form $\underbrace{0\cdots0}_{t-1}\ell$ for nonempty bitstring $\ell$. For example, when $t = 2$, we delete the children of $x_{0}$, and $x_{0}$ has a bitstring of length 1. The remaining leaves of the tree have bit strings of length $2t-1$. Observe that by construction, the distance from  $x^{*}$ to $x_{a}$ is $t$ and the distance from $x_{a}$ to the level of the leaves is also $t$.
The vertex $x^{*}$ and the $2^{2t-1}-2^{t-1}=2^{t-1}(2^{t}-1)$ leaves all have degree 1; the remaining internal vertices have degree 3.
This pruned tree is called $X_{t}$. To continue,  we  take a copy of $X_{t}$, flip it vertically, change the labels from $x$ to $y$, and call it $Y_{t}$. Finally, we join the trees $X_{t}$ and $Y_{t}$ using a zero-labeled edge between $x_a$ and $y_a$. This tree is called $T_{4t+2}$, where the "$4t+2$" indicates the height of the tree. See Figure \ref{fig:VtreeAndGtree}.

Next, we construct a family of voltage graphs $\mathcal{G}_{4t+2}$ for $t=1, 2$  by adding edge-labeled arcs to $T_{4t+2}$ 
 between the leaves of $X$ and $Y$ in such a way that each leaf (excluding $x^{*}$ and $y^{*}$) is incident with one edge of $T_{4t+2}$ and two introduced arcs; that is so that all the vertices, except the pinned vertices $x^{*}$ and $y^{*}$, have degree 3. A particular element in this family (a particular assignment of arcs and voltages) will be denoted $G_{4t+2}$. Note that this family contains voltage graphs with different arc assignments, and, for each arc assignment, different labels. For a specific choice of $G_{4t+2}$ (that is, a particular choice of arcs and labels),
see Figure \ref{fig:G10-voltage}.

\def\xx{24}
\def\yy{13}
\begin{figure}[htbp]
\begin{center}\label{fig:treeX}
\subfloat[The tree $X_{2}$. The light gray subgraph has been pruned.]{
\begin{tikzpicture}[vtx/.style={draw, circle, inner sep = 1 pt, font = \tiny},lbl/.style = {midway, fill = white, inner sep = 2 pt, font=\footnotesize}]
\node[draw, inner sep = 1 pt, font = \tiny] (xx) {$x^{*}$};
\node[vtx, below = \yy pt of xx] (x){};
\node[vtx, below left = \yy pt and \xx pt of x] (x0) {};
\node[vtx, below right = \yy pt and \xx pt of x] (x1){}; 
\node[vtx, below left = \yy pt and \xx/2 pt  of x0, opacity = .2] (x00) {};
\node[vtx, below right = \yy pt and \xx/2 pt  of x0] (x01) {};
\node[vtx, below left = \yy pt and \xx/2 pt  of x1] (x10) {};
\node[vtx, below right = \yy pt and \xx/2 pt  of x1] (x11) {};

\node[vtx, below left = \yy pt and \xx/4 pt  of x00, opacity = .2] (x000) {};
\node[vtx, below left = \yy pt and \xx/4 pt  of x01] (x010) {};
\node[vtx, below left = \yy pt and \xx/4 pt  of x10] (x100) {};
\node[vtx, below left = \yy pt and \xx/4 pt  of x11] (x110) {};

\node[vtx, below right = \yy pt and \xx/4 pt  of x00, opacity = .2] (x001) {};
\node[vtx, below right = \yy pt and \xx/4 pt  of x01] (x011) {};
\node[vtx, below right = \yy pt and \xx/4 pt  of x10] (x101) {};
\node[vtx, below right = \yy pt and \xx/4 pt  of x11] (x111) {};

\draw (xx) -- (x);
\foreach \i in {0,1}{
\draw (x) -- (x\i);}
\foreach \j in {0,1}{
\draw (x1) -- (x1\j);
\draw (x10) -- (x10\j);
\draw (x01) -- (x01\j);
\draw (x11) -- (x11\j);
}
\draw (x0) -- (x01);
\draw[opacity = .2] (x0)-- (x00);
\draw[opacity = .2] (x00)-- (x000);
\draw[opacity = .2] (x00)-- (x001);


\end{tikzpicture}
}
\hspace{2cm}
\subfloat[The tree $T_{4t+2}$ for $t = 2$]{
\begin{tikzpicture}[vtx/.style={draw, circle, inner sep = 1 pt, font = \tiny},lbl/.style = {midway, fill = white, inner sep = 2 pt, font=\footnotesize}]
\node[draw, inner sep = 1 pt, font = \tiny] (xx) {$x^{*}$};
\node[vtx, below = \yy pt of xx] (x){};
\node[vtx, below left = \yy pt and \xx pt of x] (x0) {};
\node[vtx, below right = \yy pt and \xx pt of x] (x1){}; 
\node[vtx, below right = \yy pt and \xx/2 pt  of x0] (x01) {};
\node[vtx, below left = \yy pt and \xx/2 pt  of x1] (x10) {};
\node[vtx, below right = \yy pt and \xx/2 pt  of x1] (x11) {};

\node[vtx, below left = \yy pt and \xx/4 pt  of x01] (x010) {};
\node[vtx, below left = \yy pt and \xx/4 pt  of x10] (x100) {};
\node[vtx, below left = \yy pt and \xx/4 pt  of x11] (x110) {};

\node[vtx, below right = \yy pt and \xx/4 pt  of x01] (x011) {};
\node[vtx, below right = \yy pt and \xx/4 pt  of x10] (x101) {};
\node[vtx, below right = \yy pt and \xx/4 pt  of x11] (x111) {};


\node[draw, inner sep = 1 pt, font = \tiny, below = 11*\yy pt of xx] (yy) {$y^{*}$};
\node[vtx, above = \yy pt of yy] (y){};
\node[vtx, above left = \yy pt and \xx pt of y] (y0) {};
\node[vtx, above right = \yy pt and \xx pt of y] (y1){}; 
\node[vtx, above right = \yy pt and \xx/2 pt  of y0] (y01) {};
\node[vtx, above left = \yy pt and \xx/2 pt  of y1] (y10) {};
\node[vtx, above right = \yy pt and \xx/2 pt  of y1] (y11) {};

\node[vtx, above left = \yy pt and \xx/4 pt  of y01] (y010) {};
\node[vtx, above left = \yy pt and \xx/4 pt  of y10] (y100) {};
\node[vtx, above left = \yy pt and \xx/4 pt  of y11] (y110) {};

\node[vtx, above right = \yy pt and \xx/4 pt  of y01] (y011) {};
\node[vtx, above right = \yy pt and \xx/4 pt  of y10] (y101) {};
\node[vtx, above right = \yy pt and \xx/4 pt  of y11] (y111) {};

\draw (xx) -- (x);
\foreach \i in {0,1}{
\draw (x) -- (x\i);}
\foreach \j in {0,1}{
\draw (x1) -- (x1\j);
\draw (x10) -- (x10\j);
\draw (x01) -- (x01\j);
\draw (x11) -- (x11\j);
}
\draw (x0) -- (x01);

\draw (yy) -- (y);
\foreach \i in {0,1}{
\draw (y) -- (y\i);}
\foreach \j in {0,1}{
\draw (y1) -- (y1\j);
\draw (y10) -- (y10\j);
\draw (y01) -- (y01\j);
\draw (y11) -- (y11\j);
}
\draw (y0) -- (y01);

\draw[->, thick] (x0) to[bend right = 45] (y0);



\end{tikzpicture}
}
\caption{The trees $X_{2}$ and $T_{4t+2}$, for $t = 2$}
\label{fig:VtreeAndGtree}
\end{center}
\end{figure}
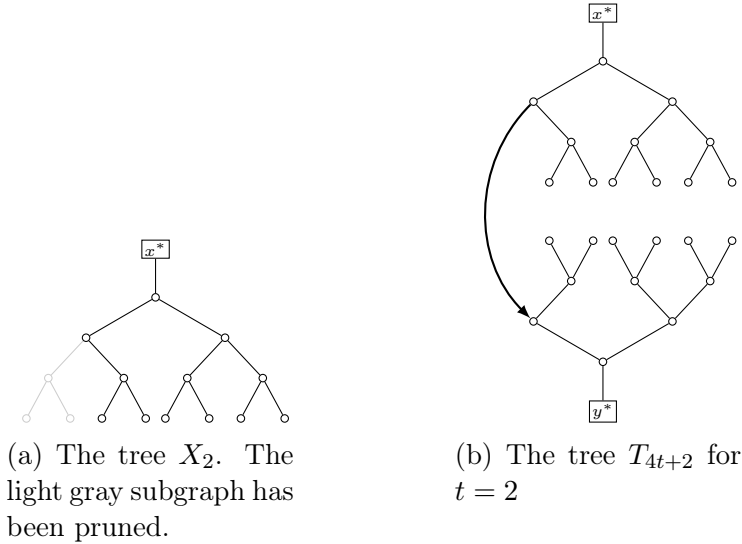

We let $(G_{4t+2}, m)$ denote the $\mathbb{Z}_{m}$ lift of a graph $G_{4t+2}$. 
The roots $x^{*}$ and $y^{*}$ in $G_{4t+2}$ are pinned vertices $x^*$ and $y^*$ in $(G_{4t+2}; m)$, and they have degree $m$. 

The graph $(G_{4t+2}; m)$ has $2+2m(\sum_{i=0}^{2t-1}2^i - \sum_{i=0}^{t-1}2^i) =2+2m(\sum_{i=t}^{2t-1}2^i)$ vertices, found by counting the two pinned vertices $x^{*}$ and $y^{*}$, summing the vertices in the pruned binary trees $X_{t}$ and $Y_{t}$, and then multiplying those vertices by $m$ from the lift. 

By choosing certain arcs and certain voltage assignments in $(\mathcal{G}_{4t+2}, m)$, we obtain specific families of $(\{3;m \};4t+2)$-biregular graphs for $t=1$ and $t=2$. 


Recall that $T_{4t+2}$ is a voltage tree 
obtained by joining the binary trees $X_{t}$ and $Y_{t}$ by the edge $(x_a y_a)$, and all the edges, including $(x_ay_a)$, have voltage assignments equal to $0$. Let $(T_{4t+2}; m)$ be the derived graph of $T_{4t+2}$, with $x^{*}$ and $y^{*}$ as pinned vertices. Notice that this derived graph has the same order as $(G_{4t+2}; m)$, that is, $n(T_{4t+2}; m) = n(G_{4t+2}; m) = 2+2m(\sum_{j=t}^{2t-1}2^j)$. 

Applying Lemma \ref{lemma:pinnedwalks} we can conclude the following:


\begin{lemma}\label{txygirth}
$(T_{4t+2}; m)$ has girth $4t+2$.
\end{lemma}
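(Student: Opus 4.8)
The plan is to exploit the fact that every edge of $T_{4t+2}$ carries voltage $0$, which makes the lift extremely rigid, and then to read off both the existence of a $(4t+2)$-cycle and the absence of anything shorter essentially for free.

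First I would describe $(T_{4t+2}; m)$ explicitly. Because all voltages are $0$, each non-pinned edge $(u,v)$ lifts to the $m$ edges $u^i v^i$ (no index shift), so the subgraph induced on the non-pinned vertices is exactly $m$ vertex-disjoint copies of $T_{4t+2} \setminus \{x^{*}, y^{*}\}$. The pinned vertex $x^{*}$ is joined to $x^{0}, \ldots, x^{m-1}$ and $y^{*}$ to $y^{0}, \ldots, y^{m-1}$. Thus the only connections between distinct copies run through the two hubs $x^{*}$ and $y^{*}$; within each copy the graph is a tree, and the lifts of the degree-$1$ leaves of $T_{4t+2}$ again have degree $1$.

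For the upper bound I would invoke Lemma \ref{lemma:pinnedwalks} directly. The unique path in the tree $T_{4t+2}$ from $x^{*}$ to $y^{*}$ runs $x^{*} \to \cdots \to x_{a} \to y_{a} \to \cdots \to y^{*}$, necessarily crossing the joining edge $(x_a y_a)$; since the distance from $x^{*}$ to $x_{a}$ is $t$, crossing that edge adds $1$, and the distance from $y_{a}$ to $y^{*}$ is again $t$, this path has length $\ell = 2t+1$. By Lemma \ref{lemma:pinnedwalks} it lifts to a cycle of length $2\ell = 4t+2$, so the girth is at most $4t+2$. For the lower bound I would argue structurally: a cycle avoids all degree-$1$ leaves, and deleting $x^{*}$ and $y^{*}$ leaves a forest, so every cycle meets a hub. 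In fact it meets both, because on entering a copy at $x^{i}$ the cycle can only leave that copy again at $y^{i}$ (its sole other exit, via $y^{*}$), the copy being a tree whose branches are dead ends. Hence any cycle is the spine of some copy $i$ from $x^{i}$ to $y^{i}$, the edge to $y^{*}$, the spine of a \emph{distinct} copy $j$ from $y^{j}$ to $x^{j}$, and the edge back to $x^{*}$; each half is a lift of the $x^{*}$-to-$y^{*}$ path and so has length $2t+1$, forcing total length exactly $4t+2$. Combining the two bounds gives girth $4t+2$.

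The main obstacle is the lower-bound bookkeeping: one must check that a cycle cannot re-enter a copy, cannot reuse a hub vertex, and cannot take a shortcut through the pruned branches, so that the unique $x^{i}$-to-$y^{i}$ spine is genuinely forced. Once the connectivity picture is set up ($m$ parallel tree-paths joined at two hubs, with dead-end branches), this is routine, but it is the step where the care is needed; everything else reduces to the length computation $t + 1 + t = 2t+1$ and a single application of Lemma \ref{lemma:pinnedwalks}.
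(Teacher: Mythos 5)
Your proof is correct, and its upper-bound half is exactly the paper's: both lift the unique $x^{*}$-to-$y^{*}$ path of length $t+1+t = 2t+1$ via Lemma~\ref{lemma:pinnedwalks} to a cycle of length $4t+2$. Where you genuinely diverge is the lower bound. The paper dispatches it in one sentence---no shorter paths lift to cycles, since $T_{4t+2}$ is a tree---implicitly leaning on the voltage-graph classification of which walks in the base lift to cycles (a tree has no closed walks to contribute, so pinned-to-pinned paths are the only source). You instead argue entirely downstairs in the derived graph: because every voltage is $0$, $(T_{4t+2};m)$ is visibly $m$ disjoint copies of $T_{4t+2}\setminus\{x^{*},y^{*}\}$ attached to the two hubs, and your connectivity bookkeeping (cycles avoid degree-$1$ leaves, deleting the hubs leaves a forest, each copy is a tree whose only exits are $x^{i}$ and $y^{i}$, so a cycle cannot re-enter a copy) forces any cycle to consist of two copy-spines plus four hub edges. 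This buys two things the paper's terse sentence does not make explicit: a self-contained argument that needs no lifting machinery beyond the definition, and the stronger conclusion that \emph{every} cycle of $(T_{4t+2};m)$ has length exactly $4t+2$, not merely at least $4t+2$. Your unpacking is precisely the verification the paper's one-liner is silently invoking, and it is carried out correctly.
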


\begin{proof}
By construction, since the distance in $T_{4t+2}$ between $x^{*}$ and $x_{a}$ is $t$, there exists a path $P=(x^{*}, \dotsc, x_a, y_a, \dotsc,  y^{*})$ of length $2t+1$ between the two roots in the voltage graph $T_{4t+2}$. This path lifts to a cycle of length $4t+2$ in $(T_{4t+2}; m)$. No other shorter paths in $T_{4t+2}$ lift to cycles, since $T_{4t+2}$ is a tree. 
Therefore, the girth of $(T_{4t+2}; m)$ is equal to $4t+2$.
\end{proof}

Figure \ref{fig:cycleLifts} shows an example of such a cycle when $t = 1$.

\begin{lemma}\label{girth4t+2}
The graph $(G_{4t+2}; m)$ has girth at most $4t+2$.  
\end{lemma}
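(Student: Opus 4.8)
The plan is to exhibit an explicit cycle of length $4t+2$ in the lift $(G_{4t+2}; m)$, which immediately bounds the girth from above. The key structural observation is that $G_{4t+2}$ is obtained from the tree $T_{4t+2}$ purely by \emph{adding} labeled arcs between the leaves of $X_t$ and $Y_t$; no edges of $T_{4t+2}$ are deleted, and the pinned vertices $x^{*}$ and $y^{*}$ are unchanged. Consequently $T_{4t+2}$ is a subgraph of $G_{4t+2}$, and since lifting a voltage graph over $\mathbb{Z}_m$ only introduces edges corresponding to the added arcs (and never removes any), the derived graph $(T_{4t+2}; m)$ is a subgraph of $(G_{4t+2}; m)$.

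First I would invoke Lemma \ref{txygirth}, which asserts that $(T_{4t+2}; m)$ has girth $4t+2$; in particular it contains a cycle $C$ of length $4t+2$. By Lemma \ref{lemma:pinnedwalks}, this cycle is precisely the lift of the path $P = (x^{*}, \ldots, x_a, y_a, \ldots, y^{*})$ of length $2t+1$ that joins the two pinned vertices through the zero-labeled edge $(x_a y_a)$. Because $P$ lies entirely within the tree $T_{4t+2}$, it remains a path in $G_{4t+2}$, so applying Lemma \ref{lemma:pinnedwalks} to $P$ inside $G_{4t+2}$ produces exactly the cycle $C$, now sitting inside $(G_{4t+2}; m)$.

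Therefore $(G_{4t+2}; m)$ contains a cycle of length $4t+2$, and its girth is at most $4t+2$, as claimed. The argument is essentially immediate once one notes the monotonicity of the lift under the addition of arcs; the only point needing a moment's care is confirming that the distance-$t$ vertices $x_a$ and $y_a$ and the connecting path $P$ survive both the pruning of the binary trees and the subsequent attachment of arcs. This holds by construction, since the arcs are added only at the leaves (the bit strings of length $2t-1$), leaving the internal path $P$ between the roots untouched. I do not expect any genuine obstacle here: unlike the matching lower bound $\mathrm{girth} \geq 4t+2$, which would require a full analysis of the short closed walks created by the added arcs (cf.\ the lollipop and joined-cycle cases of Remark \ref{lem:nonCycleWalks}), the upper bound follows solely from the persistence of the tree $T_{4t+2}$ as a subgraph.
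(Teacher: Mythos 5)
Your proof is correct and follows exactly the paper's route: the paper's entire proof is the observation that $(T_{4t+2}; m)$ is a subgraph of $(G_{4t+2}; m)$, which together with Lemma \ref{txygirth} gives a cycle of length $4t+2$ in the lift. Your additional verification that the path $P$ and the pinned vertices survive the pruning and arc-addition is a harmless elaboration of the same argument.
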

\begin{proof}
The graph $(T_{4t+2},m)$ is a subgraph of $(G_{4t+2},m)$.
\end{proof}

\begin{lemma}\label{bipartite}
The graph $\mc{G}_{4t+2}$ is bipartite.
\end{lemma}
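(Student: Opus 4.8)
The plan is to exhibit an explicit bipartition of the vertex set and check that every edge respects it. Recall that $\mc{G}_{4t+2}$ is obtained from the tree $T_{4t+2}$ by adding arcs \emph{only} between leaves of $X_t$ and leaves of $Y_t$, and that every member of the family adds arcs of exactly this type (so the specific arc and voltage assignment will be irrelevant to the argument). Since a tree is automatically bipartite, the whole task reduces to locating the leaves of $X_t$ and $Y_t$ within the tree's canonical bipartition.

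First I would fix the root $x^{*}$ and $2$-color $T_{4t+2}$ by the parity of the graph distance from $x^{*}$: let $A$ be the vertices at even distance and $B$ those at odd distance. Because $T_{4t+2}$ is a tree, this is a proper $2$-coloring. By construction every leaf of $X_t$ has a bitstring of length $2t-1$ and hence lies at distance $2t$ from $x^{*}$, so all leaves of $X_t$ belong to $A$. Next I would compute the parity of the distance from $x^{*}$ to a leaf of $Y_t$. The unique such path runs $x^{*} \to \cdots \to x_a \to y_a \to \cdots \to (\text{leaf of }Y_t)$, of length $t$ (from $x^{*}$ to $x_a$), plus $1$ (the bridging edge $x_a y_a$), plus the distance from $y_a$ to the leaf inside $Y_t$. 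Using the elementary tree identity $d(u,v)\equiv d(r,u)+d(r,v)\pmod 2$ for any root $r$, the distance from $y_a$ to a $Y_t$-leaf has the same parity as $d(y^{*},y_a)+d(y^{*},\text{leaf})=t+2t\equiv t\pmod 2$, since $Y_t$ is a copy of $X_t$ with $y_a$ at distance $t$ from $y^{*}$ and every leaf at distance $2t$. Hence each leaf of $Y_t$ sits at distance of parity $t+1+t=2t+1$ from $x^{*}$, i.e.\ at odd distance, so all leaves of $Y_t$ belong to $B$.

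With this in hand the conclusion is immediate: every arc added to $T_{4t+2}$ joins a leaf of $X_t$ (in $A$) to a leaf of $Y_t$ (in $B$), and every tree edge already goes between $A$ and $B$, so $(A,B)$ is a bipartition of $\mc{G}_{4t+2}$. I expect the only genuinely delicate point to be the parity bookkeeping for the $Y_t$-leaves: a leaf of $Y_t$ need not be a descendant of $y_a$, so one cannot simply subtract levels, and the cleanest route is the identity $d(u,v)\equiv d(r,u)+d(r,v)\pmod 2$ together with the single bridging edge $x_a y_a$ and the symmetric heights of the two pruned trees. (The same bipartiteness then passes to each lift $(\mc{G}_{4t+2};m)$, since lifts of bipartite voltage graphs are bipartite, which is what guarantees even girth in the sequel.)
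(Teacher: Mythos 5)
Your proof is correct and takes essentially the same approach as the paper: the paper also exhibits the level-parity bipartition, setting $\mathcal{B}_1 = B_{\mathcal{X}'} \cup B_{\mathcal{Y}''}$ (even levels measured from $x^{*}$ together with odd levels measured from $y^{*}$), which coincides exactly with your $2$-coloring by parity of distance from $x^{*}$ once the bridge edge $x_a y_a$ is taken into account. Your explicit parity bookkeeping (the identity $d(u,v)\equiv d(r,u)+d(r,v)\pmod 2$ and the check that $X_t$-leaves and $Y_t$-leaves land in opposite classes) merely makes verifications explicit that the paper leaves implicit.
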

\begin{proof}Let $\mathcal{X}_j$ be the set of vertices of the pruned binary tree $X_t$ contained in the voltage graph $G_{4t+2}$ at distance $j$ from $x^{*}$: $\mathcal{X}_{0}=x^{*}$, $\mathcal{X}_{1}=x$, $\mathcal{X}_{2}=\{x_0, x_1\}$, $\mathcal{X}_{3}=\{x_{00}, x_{01}, x_{10}, x_{11}\}$ and so on. Analogously, let $\mathcal{Y}_j$ be the set of vertices of the pruned binary tree $Y_t$ at distance $j$ of $y^{*}$. Observe that  $B_{\mathcal{X}^{'}}=\{\mathcal{X}_0, \mathcal{X}_2, ..., \mathcal{X}_{2t}\}$ and $B_{\mathcal{X}^{''}}=\{\mathcal{X}_1, \mathcal{X}_3, ...,\mathcal{X}_{2t-1}\}$ are two disjoint subsets of vertices of $X_t$ whose union is $V(X_t)$. Analogously, $B_{\mathcal{Y}^{'}}$ and $B_{\mathcal{Y}^{''}}$ are two disjoint subsets of the vertices of $Y_t$ which union is $V(Y_t)$. The bipartite classes are $\mathcal{B}_1=B_{\mathcal{X}^{'}}\cup B_{\mathcal{Y}^{''}}$ and $\mathcal{B}_2=B_{\mathcal{X}^{''}}\cup B_{\mathcal{Y}^{'}}$. 
\end{proof}

Consequently, by Lemma \ref{bipartite}, we conclude that all derived graphs $(G_{4t+2}; m)$ in the family of graphs $(\mathcal{G}_{4t+2}, m)$  are also bipartite graphs. 


\section{A family of semicubic cages of girth $6$}
In this section, we obtain a family of semicubic cages of girth $6$, by introducing a family of voltage graphs $G_6$, shown in Figure \ref{fig:G6}, for which the $(G_{6},m)$-graphs attain the lower bound given in \cite{HWJ92}. If the voltage assignments of the graphs as described in Theorem \ref{G6} are $\alpha=1$ and $\beta=m-1$ we obtain the graphs given in \cite{HWJ92}. 

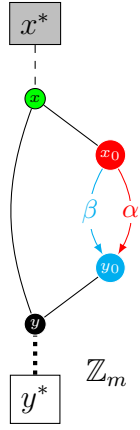
\begin{figure}[htbp]
\begin{center}
\begin{tikzpicture}[vtx/.style={draw, circle, inner sep = 1 pt, font = \tiny},lbl/.style = {midway, fill = white, inner sep = 2 pt, font=\footnotesize}]
\node[draw] (yy) at (0,0) {$y^{*}$};
\node[vtx, white,fill = black] (y) at (0,1) {$y$};
\node[vtx, white,fill = cyan] (y0) at (1,1.75) {$y_{0}$};
\node[vtx, white, fill = red] (x0) at (1,3.25) {$x_{0}$};
\node[vtx, fill = green] (x) at (0,4) {$x$};
\node[draw, fill = gray!50] (xx) at (0,5) {$x^{*}$};
\draw[] (x) to[bend right = 20] (y);
\draw[dashed] (xx) to (x);
\draw[dotted, ultra thick] (yy) to (y);
\draw[] (y) to (y0);
\draw[] (x) to (x0);
\draw[-latex, red] (x0) to[bend left = 30] node[lbl]{$\alpha$} (y0);
\draw[-latex, cyan] (x0) to[bend right = 30] node[lbl]{$\beta$} (y0);
\path node[below right = 6 pt and 12 pt of y] {$\mathbb{Z}_{m}$};
\end{tikzpicture}
\end{center}

%
   \caption{Graph $G_{6}$ is girth 6 for $m \geq 3$ and any $\alpha, \beta$ such that $\alpha\neq \beta$, $2(\alpha - \beta) \not\equiv 0 \bmod m$, $\alpha\neq 0$ and $\beta \neq 0$.}
    \label{fig:G6}
\end{figure}

 \begin{theorem}\label{G6}
The family of  graphs $(G_6,m)$ given by the family of voltage graphs $G_6$ with voltage assignments $(x_0 y_0)=\alpha $, $(x_0 y_0)=\beta$, $\alpha\neq \beta$, $\alpha\neq 0$ and $\beta \neq 0$, and $2(\alpha - \beta) \not\equiv 0 \mod m$  for $m\geq 3$ has girth $6$. Moreover, it is a $(\{3,m\};6)$-graph with two vertices of degree $m$ and $4m$ vertices of degree $3$. 

\end{theorem}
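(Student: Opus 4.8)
The plan is to prove the statement in three movements: the degree/order bookkeeping, bipartiteness (to kill all odd cycles at once), and then pinning the girth to exactly $6$ by bounding it above and below. The bookkeeping is immediate from the Observation on biregular lifts in Section \ref{prel}: the voltage graph $G_6$ of Figure \ref{fig:G6} has four non-pinned vertices $x,y,x_0,y_0$, each of degree $3$, and two pinned vertices $x^*,y^*$ of degree $1$. Lifting over $\mathbb{Z}_m$ turns each non-pinned vertex into $m$ vertices of degree $3$ and each pinned vertex into a single vertex of degree $m$, giving $4m$ vertices of degree $3$, two vertices of degree $m$, and total order $4m+2$. I would record here that the lift is \emph{simple}: the only candidate multi-edge comes from the two arcs $x_0\to y_0$ labeled $\alpha$ and $\beta$, which join $x_0^{i}$ to $y_0^{i+\alpha}$ and $y_0^{i+\beta}$; since $\alpha\neq\beta$ these targets differ, and all remaining (voltage-$0$) edges lift to single edges. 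In particular there are no $2$-cycles.

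Next, since $G_6$ is the $t=1$ member of the family $\mathcal{G}_{4t+2}$, Lemma \ref{bipartite} gives that $(G_6,m)$ is bipartite, so every cycle has even length and the girth is at least $4$. For the upper bound I would invoke Lemma \ref{lemma:pinnedwalks} (equivalently Lemma \ref{txygirth} with $t=1$): the length-$3$ path $x^*\!-\!x\!-\!y\!-\!y^*$ between the two pinned vertices lifts to a $6$-cycle, so the girth is at most $6$. It then remains only to exclude $4$-cycles.

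The core of the argument is this exclusion, which I would split according to whether a putative $4$-cycle meets a pinned vertex. A cycle through $x^*$ must read $x^*\!-\!x^{i}\!-\!v\!-\!x^{j}\!-\!x^*$ with $i\neq j$, hence demands a length-$2$ path between distinct copies $x^{i},x^{j}$; but each neighbor of $x^{i}$ other than $x^*$ (namely $y^{i}$ and $x_0^{i}$) is adjacent to no copy of $x$ except $x^{i}$ itself, so no such path exists, and likewise for $y^*$ (this is consistent with the distance-$3$ statement of Lemma \ref{lemma:pinnedwalks} and the absence of a length-$4$ lollipop in Lemma \ref{lollipop}). A $4$-cycle avoiding the pinned vertices projects, by Lemma \ref{nonreversing}, to a non-reversing closed walk of length $4$ in the subgraph on $\{x,y,x_0,y_0\}$ whose voltage sum is $\equiv 0 \pmod m$. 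Enumerating these walks (the relevant cases of Remark \ref{lem:nonCycleWalks}), there are exactly two forms: the $4$-cycle $(x,x_0,y_0,y)$ using one arc, with voltage sum $\pm\alpha$ or $\pm\beta$; and a double traversal of the $2$-cycle on $\{x_0,y_0\}$ alternating the two arcs, with voltage sum $2(\alpha-\beta)$. The hypotheses $\alpha\not\equiv 0$, $\beta\not\equiv 0$, and $2(\alpha-\beta)\not\equiv 0 \pmod m$ say precisely that none of these sums vanishes, so none lifts to a closed walk and no $4$-cycle exists.

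Assembling the pieces: the girth is even, at most $6$, and admits neither $2$- nor $4$-cycles, hence equals $6$; combined with the first paragraph this yields a $(\{3,m\};6)$-graph with two vertices of degree $m$ and $4m$ of degree $3$. I expect the only genuine obstacle to be the clean enumeration of non-reversing closed walks of length $4$, and in particular the recognition that the double traversal of the $2$-cycle on $\{x_0,y_0\}$ is a legitimate $4$-cycle candidate; this single case is what forces the hypothesis $2(\alpha-\beta)\not\equiv 0 \pmod m$, while bipartiteness, the exhibited $6$-cycle, and the vertex count all follow routinely from the lemmas already established.
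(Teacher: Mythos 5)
Your proposal is correct and takes essentially the same approach as the paper's proof of Theorem \ref{G6}: both reduce the girth question to voltage sums of short closed walks in the voltage graph, identify exactly the same critical cases (the one-arc $4$-cycles with sums $\pm\alpha$, $\pm\beta$ and the double traversal of the $2$-cycle on $\{x_{0},y_{0}\}$ with sum $2(\alpha-\beta)$), and get the upper bound from the length-$3$ path between the pinned vertices lifting to a $6$-cycle via Lemma \ref{lemma:pinnedwalks}. The only difference is organizational: where the paper also enumerates lollipop walks and joined-cycle walks to verify they lift to cycles of length at least $6$, you bypass these by observing that only lengths $2$ and $4$ need excluding and rule out $4$-cycles through pinned vertices by a direct neighborhood inspection --- a harmless streamlining.
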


%
%

 \begin{proof}

We analyze the following subgraphs of the voltage graph: paths that contain two pinned vertices,  ``lollipop'' walks containing a single cycle joined to a path, cycles in $G_6$ without pinned vertices, and non-reversing non-cycle closed walks whose total length is 6. 

\begin{enumerate}
\item A minimum path between $x^{*}$ and $y^{*}$ in the voltage graph $G_{6}$ is the path $(x^{*} x y y^{*})$, which lifts to the 6-cycle $(x^{*} x^{0} y^{0} y^{*} y^{1} x^{1} x^{*})$, so the girth is at least 6.

\item All the lollipops in $G_6$ with some edges with non-zero voltage assignments are essentially of two types: one has a path of length 1 and one $4$-cycle, like $(x^{*},x,y,y_0,x_0,x,x^{*})$, and one has a path of length 2 with a $2$-cycle, like $(x^{*},x,x_0,y_0,x_0,x,x^{*})$. By Lemma \ref{lollipop} both induce $6$-cycles in $(G_6; m)$.

\item In considering the behavior of lifts of cycles without pinned vertices, note that since the graphs in $(\mathcal{G}_6; m)$ are bipartite, it is sufficient to show that every cycle of length $2$ or $4$ in $G_6$ lifts to a cycle of length at least $6$ in $(G_6; m)$. 



Inspection shows that there are 2 different 4-cycles, one of the form $(x, \underbrace{x_{0}, y_{0}}_{\alpha}, y, x)$ and one of the form $(x, \underbrace{x_{0}, y_{0}}_{\beta}, y, x)$ where the bracing indicates the voltage label on the directed edge $(x_{0}, y_{0})$ (and which arrow we are choosing). The walk formed by traversing this cycle once lifts to a 4-cycle only when $\alpha \equiv 0 \bmod m$ or $\beta \equiv 0 \bmod m$. There is a single 2-cycle, using both directed arrows (reversing one), corresponding to the directed cycle 
$\overunderbraces{&\br{2}{\alpha} &  }%
  {&x_{0}, &y_{0}, &x_{0}&}%
  {& &\br{3}{-\beta}}$
 %
(note that we negate the voltage since we are traveling backwards along the arrow). Once around this 2-cycle lifts to a 2-cycle when $\alpha =\beta$, and twice around the 2-cycle lifts to a 4-cycle when $2(\alpha - \beta) \equiv 0 \bmod m$, since in these cases the voltage sum for the closed walk equals 0.


\item The final option for walks that could lift to cycles consists of cycles joined by vertices or paths or that share an edge or path. However, since the only available cycles in $G_{6}$ are 2-cycles or 4-cycles, such walks are all of length greater than or equal to 6, so the lifts of these walks do not decrease the girth.
\end{enumerate}

\end{proof}

The order of the graph for $m\geq 3$ is equal to $4m+2$. This family is isomorphic to the family constructed in \cite{ABLM13}, and it also appears in \cite{AEJ16} in the general Theorem for $g\equiv 2 \pmod 4$. 
\section{A family of semicubic graphs of girth $10$}\label{girth10}

In this section, we will give a voltage assignment for a particular voltage graph $G_{10}$ in  $\mc{G}_{4t+2}$ for $t=2$, shown in Figure \ref{fig:G10-voltage}, and we will prove that its $\mathbb{Z}_{m}$ lifts form a family of semicubic graphs of girth $10$. The arcs between leaves of $\mc{G}_{10}$ and voltage assignments on those arcs that produce the voltage graph $G_{10}$ are also given in Table \ref{table:G10table}. 

\def\xx{70}
\def\yy{20}
\begin{figure}[htbp]
\begin{center}
\begin{tikzpicture}[vtx/.style={draw, circle, inner sep = 1 pt, font = \tiny},bigvtx/.style={draw, , inner sep = 1 pt, font = \tiny, minimum height = 10 pt},lbl/.style = {midway, fill = white, inner sep = 1 pt, font=\scriptsize}]
\node[draw, inner sep = 1 pt, font = \tiny] (xx) {$x^{*}$};
\node[vtx, below = \yy pt of xx] (x){};
\node[vtx, below left = \yy pt and \xx pt of x] (x0) {};
\node[vtx, below right = \yy pt and \xx pt of x] (x1){}; 
\node[vtx, below right = \yy pt and \xx/2 pt  of x0] (x01) {};
\node[vtx, below left = \yy pt and \xx/2 pt  of x1] (x10) {};
\node[vtx, below right = \yy pt and \xx/2 pt  of x1] (x11) {};

\node[vtx, below left = \yy pt and \xx/4 pt  of x01, label={[font = \tiny, label distance=-5pt]above left:$x_{010}$}
] (x010) {};
\node[vtx, below left = \yy pt and \xx/4 pt  of x10, label={[font = \tiny, label distance=-5pt]above left:$x_{100}$}] (x100) {};
\node[vtx, below left = \yy pt and \xx/4 pt  of x11, label={[font = \tiny, label distance=-5pt]above left:$x_{110}$}] (x110) {};

\node[vtx, below right = \yy pt and \xx/4 pt  of x01,label={[font = \tiny, label distance=-5pt]above left:$x_{011}$}] (x011) {};
\node[vtx, below right = \yy pt and \xx/4 pt  of x10, label={[font = \tiny, label distance=-5pt]above left:$x_{101}$}] (x101) {};
\node[vtx, below right = \yy pt and \xx/4 pt  of x11, label={[font = \tiny, label distance=-5pt]above right:$x_{111}$}] (x111) {};


\node[draw, inner sep = 1 pt, font = \tiny, below = 12*\yy pt of xx] (yy) {$y^{*}$};
\node[vtx, above = \yy pt of yy] (y){};
\node[vtx, above left = \yy pt and \xx pt of y] (y0) {};
\node[vtx, above right = \yy pt and \xx pt of y] (y1){}; 
\node[vtx, above right = \yy pt and \xx/2 pt  of y0] (y01) {};
\node[vtx, above left = \yy pt and \xx/2 pt  of y1] (y10) {};
\node[vtx, above right = \yy pt and \xx/2 pt  of y1] (y11) {};

\node[vtx, above left = \yy pt and \xx/4 pt  of y01, label={[font = \tiny, label distance=-5pt]below left:$y_{010}$}] (y010) {};
\node[vtx, above left = \yy pt and \xx/4 pt  of y10, label={[font = \tiny, label distance=-5pt]below left:$y_{100}$}] (y100) {};
\node[vtx, above left = \yy pt and \xx/4 pt  of y11, label={[font = \tiny, label distance=-5pt]below left:$y_{110}$}] (y110) {};

\node[vtx, above right = \yy pt and \xx/4 pt  of y01,label={[font = \tiny, label distance=-5pt]below left:$y_{011}$}] (y011) {};
\node[vtx, above right = \yy pt and \xx/4 pt  of y10,label={[font = \tiny, label distance=-5pt]below left:$y_{101}$}] (y101) {};
\node[vtx, above right = \yy pt and \xx/4 pt  of y11, label={[font = \tiny, label distance=-5pt]below right:$y_{111}$}] (y111) {};

\draw (xx) -- (x);
\foreach \i in {0,1}{
\draw (x) -- (x\i);}
\foreach \j in {0,1}{
\draw (x1) -- (x1\j);
\draw (x10) -- (x10\j);
\draw (x01) -- (x01\j);
\draw (x11) -- (x11\j);
}
\draw (x0) -- (x01);

\draw (yy) -- (y);
\foreach \i in {0,1}{
\draw (y) -- (y\i);}
\foreach \j in {0,1}{
\draw (y1) -- (y1\j);
\draw (y10) -- (y10\j);
\draw (y01) -- (y01\j);
\draw (y11) -- (y11\j);
}
\draw (y0) -- (y01);

\draw[-latex] (x0) to[bend right = 45] (y0);

\draw[-latex, red,  thick] (x010) to[bend right = 0] node[lbl]{1} (y010);
\draw[-latex, blue,  thick] (x010) to[bend left = 0] node[lbl, pos = .1]{2} (y111);
\draw[-latex, blue,  thick] (x011) to[bend right = 0] node[lbl, pos = .6]{2} (y011);
\draw[-latex, red,  thick] (x011) to[bend right = 0] node[lbl, near start,]{1} (y110);
\draw[-latex, blue,  thick] (x100) to[bend right = 0] node[lbl, near end,]{2} (y010);
\draw[-latex, red,  thick] (x100) to[bend right = 0] node[lbl, pos = .65,]{1} (y101);
\draw[-latex, red,  thick] (x101) to[bend right = 0] node[lbl, near end,]{1} (y011);
\draw[-latex, blue,  thick] (x101) to[bend right = 0] node[lbl, near end,]{2} (y100);
\draw[-latex, blue,  thick] (x110) to[bend right = 0] node[lbl, near end,]{2} (y110);
\draw[-latex, red,  thick] (x110) to[bend right = 0] node[lbl, near end,]{1} (y111);
\draw[-latex, green!60!black,  thick] (x111) to[bend right = 0] node[lbl, pos = .45]{3} (y100);
\draw[-latex, ,  thick] (x111) to[bend right = 0] node[,lbl,pos = .6 , inner sep = .5]{0} (y101);

\begin{scope}[on background layer]
\draw[line width = 5 pt, opacity = .5, yellow] (x111.center) to[bend right = 0] (y100.center) -- (y10) -- (y101)  to[bend left = 0] (x111.center);
\draw[line width = 5 pt, opacity = .5, yellow] (x110.center) to[bend right = 0] (y110.center) -- (y11) -- (y111)  to[bend left = 0] (x110.center);
\end{scope}

\end{tikzpicture}
\caption{The voltage graph $G_{10}$. Unlabeled edges have voltage 0. Edges with the same voltage assignment are colored the same color for convenience. The two 4-cycles in the graph are highlighted in yellow.}
\label{fig:G10-voltage}
\end{center}
\end{figure}
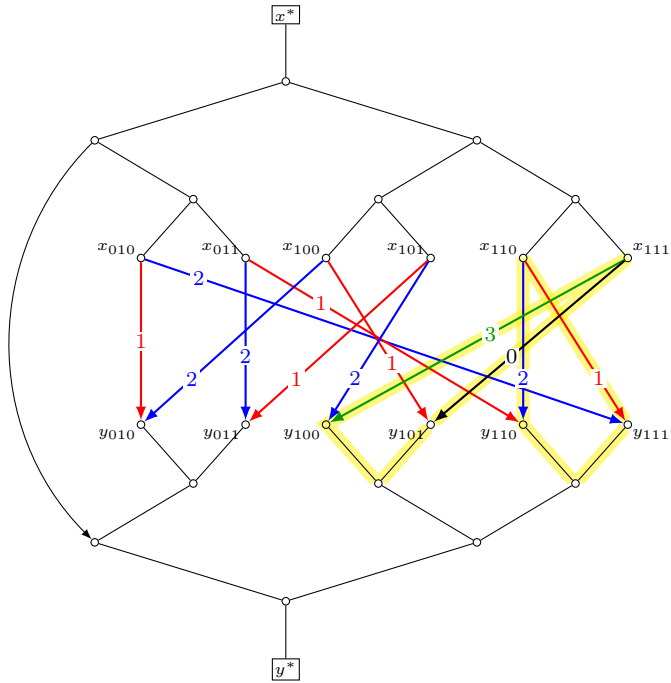



\begin{theorem}\label{G10old}
The derived graphs $(G_{10}, m)$ associated with the voltage graph $G_{10}$ (shown in Figure \ref{fig:G10-voltage}) have girth 10 for all $m\geq 4$ except for $m = 6$. Moreover, the graphs $(G_{10}, m)$ are $(\{3,m\};10)$-graphs with $2$ vertices of degree $m$ and $24m$ vertices of degree $3$.
\end{theorem}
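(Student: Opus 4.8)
The order and degree claims are immediate from the construction of Section~\ref{case4t+2}. The voltage graph $G_{10}$ has exactly $24$ non-pinned vertices, all of degree $3$, together with the pinned vertices $x^{*}$ and $y^{*}$; hence by the Remark following the definition of pinned vertices, $(G_{10},m)$ is $(\{3,m\};10)$-biregular with $24m$ vertices of degree $3$ and two lifted pinned vertices of degree $m$, for a total order $24m+2$. By Lemma~\ref{bipartite} the graph is bipartite, so all cycles have even length, and by Lemma~\ref{girth4t+2} the girth is at most $10$. The real content is the lower bound: I would show that for $m\ge 4$, $m\ne 6$, the lift $(G_{10},m)$ has no cycle of length $4$, $6$, or $8$.

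Following the template of Theorem~\ref{G6}, I would split the short cycles of the lift according to the four mechanisms that can create them: (i) lifts of paths between the two pinned vertices (Lemma~\ref{lemma:pinnedwalks}); (ii) lifts of lollipop walks (Lemma~\ref{lollipop}); (iii) lifts of genuine cycles of $G_{10}$ (Lemma~\ref{nonreversing}); and (iv) lifts of the non-cycle joined-cycle walks classified in Remark~\ref{lem:nonCycleWalks}. For (i), the shortest path joining $x^{*}$ to $y^{*}$ is $x^{*}\,x\,x_{0}\,y_{0}\,y\,y^{*}$ of length $5$ (any path that descends to a leaf and crosses an arc is strictly longer), so it lifts to a $10$-cycle and to nothing shorter. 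For (ii), the decisive structural fact is that every cycle of $G_{10}$ uses at least one arc and is therefore pushed toward the leaves: a cycle meeting a vertex at distance $d$ from a pinned vertex has length at least $10-2d$ (the two $4$-cycles stay at distance $\ge 3$; the $6$-cycles through the connecting edge reach distance $2$ at $x_{0}$ and $y_{0}$; cycles reaching $x$ or $y$ at distance $1$ have length $\ge 8$). Since a lollipop with path length $p$ and cycle length $q$ lifts to a cycle of length $2p+q$, and its path reaches the cycle at a vertex of distance at most $p$ from the pinned end, we obtain $2p+q\ge 10$ in every case; this applies whenever the cycle's voltage sum is nonzero, the complementary case being absorbed into (iii).

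The substantive work is (iii)--(iv), a finite enumeration of the short cycles of $G_{10}$ with their voltage sums. The tree-plus-arcs structure forces each cycle to consist of arcs joined by tree geodesics between leaves, and since leaf-to-leaf tree distances are even, the shortest cycles are exactly the two $4$-cycles highlighted in Figure~\ref{fig:G10-voltage}, namely $(x_{111},y_{100},y_{10},y_{101})$ and $(x_{110},y_{110},y_{11},y_{111})$, with voltage sums $3$ and $1$ respectively. Once around, neither sum is $\equiv 0 \pmod m$ for $m\ge 4$, so neither lifts to a $4$-cycle. For length $8$ I would examine the closed walk going twice around each of these: the sum-$1$ cycle gives total $2$, which is $\not\equiv 0 \pmod m$ for $m\ge 3$, whereas the sum-$3$ cycle gives total $6$, and $6\equiv 0 \pmod m$ exactly when $m\in\{2,3,6\}$. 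Among $m\ge 4$ this is precisely $m=6$, and there the double traversal does lift to a genuine $8$-cycle (its single traversal has sum $3\not\equiv 0 \bmod 6$, so no proper sub-walk vanishes and Lemma~\ref{nonreversing} applies); this single arithmetic coincidence is exactly what forces the excluded value. One then enumerates the remaining $6$- and $8$-cycles --- which, by the structure, simply pair up sibling or cousin leaves through one or two arcs, so there are only finitely many --- and checks that each voltage sum is $\not\equiv 0 \pmod m$ for the allowed $m$. For (iv), since the only two $4$-cycles are vertex- and edge-disjoint and $G_{10}$ has no $2$-cycles, none of the length-$6$ or length-$8$ joined-walk forms of Remark~\ref{lem:nonCycleWalks} occur, while the length-$10$ joined forms cannot reduce the girth below $10$.

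The main obstacle is the completeness and bookkeeping of this enumeration: one must list all $6$- and $8$-cycles (and all admissible joined walks) and confirm that their voltage sums never vanish modulo the allowed $m$, rather than merely inspecting the two conspicuous $4$-cycles. The redeeming feature is that the entire dependence on $m$ collapses to the single relation $6\equiv 0\pmod 6$, coming from the double traversal of the voltage-sum-$3$ four-cycle, which is what pins down $m=6$ as the unique exception.
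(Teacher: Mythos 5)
Your proposal follows essentially the same route as the paper's proof: girth at most $10$ from Lemma \ref{girth4t+2}, bipartiteness to exclude odd cycles, the same four-way case split (pinned paths, lollipops, cycles, joined walks from Remark \ref{lem:nonCycleWalks}), the disjointness of the two $4$-cycles to kill the joined-walk cases, and the identical arithmetic $2\cdot 3\equiv 0\pmod 6$ on the voltage-sum-$3$ four-cycle to pin down the excluded value $m=6$. The only difference is that the paper discharges the finite check you leave pending by a Mathematica enumeration ($82$ cycles of length $4$, $6$, or $8$, all with voltage sums in $\{\pm 1,\pm 2,\pm 3\}$), so your outline is correct and complete in structure, with that verification carried out computationally rather than by hand.
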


\begin{table}[htp]
\caption{Labelled arcs between leaves of $\mc{G}_{10}$ forming a voltage graph $G_{10}$. The $\mathbb{Z}_{m}$ lifts $(G_{10}, m)$ are girth 10 for $m \neq 1,2,3,6$.}
\begin{center}
\begin{tabular}{c|  c  c c c c c c c c c c c}
Starting leaf & $x_{010}$ & $x_{010}$ & $x_{011}$ & $x_{011}$ & $x_{100}$ & $x_{100}$ & $x_{101}$ & $x_{101}$  & $x_{110}$ & $x_{110}$ & $x_{111}$ & $x_{111}$ \\
Ending leaf & $y_{010}$ & $y_{111}$ & $y_{011}$ & $ y_{110}$ & $y_{010}$& $y_{101}$ & $y_{011}$& $y_{100}$ & $y_{110}$& $y_{111}$ & $y_{100}$& $y_{101}$\\
Voltage & 1 & 2 & 2 & 1 & 2 & 1 & 1 & 2 & 2 & 1 & 3 & 0 \\
\end{tabular}
\end{center}
\label{table:G10table}
\end{table}%

\begin{proof}
It suffices to consider the sums of voltages on non-reversing walks in the voltage graph $G_{10}$; a walk lifts to a cycle in $(G_{10},m)$ if and only if the sum of the voltages along the walk is congruent to $0 \pmod m$. 

Observe that by 
Lemma \ref{girth4t+2},
the girth of $(G_{10}, m)$ is at most 10. 
To show that the girth is equal to 10, we need to show that there are no cycles of length 2, 4, 6, or 8 (there are no odd cycles because $G_{10}$ is bipartite, by Lemma \ref{bipartite}).

We consider the following walks to determine if they lift to cycles: (1) cycles; (2) walks formed by combining cycles; (3) lollipops.
\be
\item There are no non-reversing walks of length 2 in $G_{10}$; the only such cycle would be going along an edge and immediately returning back along the same edge, which is a reversing walk.
\item Analysis in Mathematica \cite{Mathematica} of the voltage graph $G_{10}$ (with all edges doubled, to allow for travel in either direction along the edges) gives 82 cycles of length 4, 6, or 8. Summing the voltages along all these cycles results in voltage sums in the set $\{-1,-2,-3,1,2,3\}$. 
Since $m >3$, one time around each of these individual cycles lifts to a path that is not a closed cycle, since the voltage sums are not congruent to $0 \mod m$. However, an inspection of the two $4$-cycles shows that twice around the 4-cycle $(\overbrace{x_{111},y_{110}}^3, y_{11}, y_{101}, x_{111})$, which has voltage sum 3, lifts to an 8-cycle when $m = 6$, since $2\cdot 3 \equiv 0 \pmod 6$. Thus, we conclude that $(G_{10}, 6)$ has girth at most 8, so $m = 6$ is excluded.

\item Now we consider walks that are not themselves cycles that are formed by joining cycles at vertices, edges, or paths, or joining two cycles by a path There are no such walks of length 4 (all walks of length 4 in $G_{10}$ are cycles). Following the discussion in Observation \ref{lem:nonCycleWalks}, note that walks of length 6 must be formed by joining 2-cycles and 4-cycles, but there are no 2-cycles. Closed walks that are of length 8 whose voltages sum to 0 could be formed by joining two 4-cycles at a vertex, or by joining a 4-cycle and a 4-cycle along a (possibly non-zero-labeled) edge 
(See Figure \ref{fig:joinCycles}). However, there are only two 4-cycles in $G_{10}$ and they are disjoint (see Figure \ref{fig:G10-voltage}, where the two 4-cycles are highlighted in yellow), so there are no such walks. Joining disjoint cycles with a path, as in Figure \ref{fig:4cyclesJoinEdge} results in even longer cycles in the lift.

\item Finally, we consider lollipop walks formed by joining a path to a pinned vertex. None of these walks lift to short cycles, because the pinned vertices are too far from the cycles. For example, a shortest path to a cycle is $(x^{*}, x,x_{0})$ joined to the 6-cycle $(x_{0}, y_{0}, y_{01}, \overbrace{y_{010},x_{010}}^{-1}, x_{01}, x_{0})$, which will lift to a cycle of length 10. Similarly, a path of length 4 joined to a 4-cycle will lift to a cycle of length 12.
\ee
In all cases, as long as $m \neq 1,2,3, 6$, there are no short walks in $G_{10}$ that lift to short cycles (with length at most 8) in the derived graph $(G_{10}, m)$, so the girth of $(G_{10}, m)$ equals 10.
\end{proof}
As we said before, the tree used in Theorem \ref{G10old} was also given in \cite{AEJ16}, and the order of the voltage graph $G_{10}$ is $26$, so the derived graph $(G_{10},m)$ gives us a family of $(\{3,m\};10)$-graphs with $24m+2$ vertices. 

Next, we describe a new voltage graph $H_{10}$ whose order is $22$ (see Figure \ref{fig:H10}). The derived graphs $(H_{10},m)$ produce a family of $(\{3,m\};10)$-graphs with $20m+2$ vertices, which obviously is an improvement. The derived graphs $(H_{10},m)$ are girth 10  for $m \geq 6$. 


\begin{figure}[htbp]
\begin{center}

\begin{tikzpicture}[
level distance=8mm,
level 1/.style={sibling distance=9 cm},
level 2/.style={sibling distance=7cm},
level 3/.style={sibling distance= 4cm},
level 4/.style={sibling distance= 2 cm},
vtx/.style={draw, circle, inner sep = 1 pt, font = \scriptsize},bigvtx/.style={draw, , inner sep = 1 pt, font = \scriptsize, minimum height = 10 pt},lbl/.style = {midway, fill = white, inner sep = 2 pt, font=\scriptsize}]

]


\tikzset{arc/.style = {red, thick, near start}}

\tikzset{edge from parent/.style={draw, edge from parent path={(\tikzparentnode) -- (\tikzchildnode)}}}

\node[bigvtx] (xx) {$x^{*}$}
	child[] {node[vtx,label = {[font = \scriptsize, label distance=-1pt]below:${x}$ }] (x) {}
		child[] {node[vtx,label =   {[font = \scriptsize, label distance=-1pt]left:${x_{0}}$} ] (x0) {}
			child{edge from parent[draw=none] node[draw=none]{}} 	
			child[]{node[vtx,label = {[font = \scriptsize, label distance=-1pt]left:${x_{01}}$}] (x00) {}
				child{node[vtx,label = {[font = \scriptsize, label distance=-1pt]left:${x_{010}}$}] (x000) {}}
				child{node[vtx,label = {[font = \scriptsize, label distance=-1pt]left:${x_{011}}$}] (x001) {}}
			}
		}
		child[] {node[vtx,label = {[font = \scriptsize, label distance=-1pt]right:$x_{1}$ }] (x1) {}
			child[]{node[vtx,label = {[font = \scriptsize, label distance=-1pt]left:${x_{10}}$}] (x10) {}
				child{node[draw = none
							] (x100) {}edge from parent[draw = none]}
				child{node[vtx,label = {[font = \scriptsize, label distance=-1pt]left:${x_{101}}$}] (x101) {}}
			}
			child[]{node[vtx,label = {[font = \scriptsize, label distance=-1pt]left:${x_{11}}$}] (x11) {}
				child{node[vtx,label = {[font = \scriptsize, label distance=-1pt]left:${x_{110}}$}] (x110) {}}
				child{node[draw = none
							] (x111) {} edge from parent[draw = none]}
			}
			}		
		};

\begin{scope}[grow = up, yshift = -9cm]		
\node[bigvtx] (yy) {$y^{*}$}
	child[] {node[vtx,label = {[font = \scriptsize, label distance=-1pt]above:${y}$} ] (y) {}
		child[] {node[vtx,label = {[font = \scriptsize, label distance=-1pt]right:$y_{1}$ }] (y1) {}
			child[]{node[vtx,label = {[font = \scriptsize, label distance=-1pt]left:${y_{11}}$}] (y11) {}
				child{node[ draw = none
								] (y111) {} edge from parent[draw = none]}
				child{node[vtx,label ={[font = \scriptsize, label distance=-1pt] left:${y_{110}}$}] (y110) {}}
				}
			child[]{node[vtx,label = {[font = \scriptsize, label distance=-1pt]left:${y_{10}}$}] (y10) {}
				child{node[vtx,label = {[font = \scriptsize, label distance=-1pt]left:${y_{101}}$}] (y101) {}}
				child{node[draw = none, 
								] (y100) {}edge from parent[draw = none]}
				}
			}	
	child[] {node[vtx,label =  {[font = \scriptsize, label distance=-1pt] left:${y_{0}}$ }] (y0) {}
			child[]{node[vtx,label = {[font = \scriptsize, label distance=-1pt]left:${y_{01}}$}] (y00) {}
				child{node[vtx,label = {[font = \scriptsize, label distance=-1pt]left:${y_{011}}$}] (y001) {}}
			child{node[vtx,label = {[font = \scriptsize, label distance=-1pt]left:${y_{010}}$}] (y000) {}}
			}
			child{ node[draw=none]{} edge from parent[draw=none]}
		}	
		};
\end{scope}	



\draw[ arc, black, -latex] (x0) to[bend right = 20]  (y0);
\draw[arc,  -latex, cyan] (x10) to[bend right = 25] node[lbl, pos = .2] {1} 
		(y10);
\draw[arc,  -latex, cyan] (x11) to[bend left = 25] node[lbl] {2} 
		(y11);
\draw[arc,  -latex] (x000) -- node[lbl]{2} 
	(y000);
\draw[arc,  -latex] (x000) -- node[lbl, pos = .2]{1} 
	(y110);
\draw[arc,  -latex] (x001) -- node[lbl, pos = .3]{2}
	(y101);
\draw[arc,  -latex] (x001) -- node[lbl, pos = .45]{3} 
	(y001);
\draw[arc,  -latex] (x110) -- node[lbl] {3} 
	(y110);
\draw[arc, -latex] (x110) -- node[lbl] {1}
	(y001);
\draw[arc,  -latex] (x101) -- node[lbl] {5} 
	(y000);
\draw[arc,  -latex] (x101) -- node[lbl]{3} 
	(y101);
	
	\begin{scope}[on background layer]
\draw[line width = 5 pt, opacity = .5, yellow] (x10.center) to[bend right = 25] (y10.center) -- (y101) -- (x101)  to[bend left = 0] (x10.center);
\draw[line width = 5 pt, opacity = .5, yellow] (x11.center) to[bend left = 25] (y11.center) -- (y110) -- (x110)  to[bend left = 0] (x11.center);
\end{scope}
	
\node[below right = of y1] {$\mathbb{Z}_{m}$};
\end{tikzpicture}

\caption{The voltage graph $H_{10}$, with voltage group $\mathbb{Z}_{m}$. Unlabeled edges all have voltage assignment 0, and the boxed vertices \fbox{$x^{*}$} and \fbox{$y^{*}$} are pinned vertices. Lifts of this graph are biregular graphs with two vertices of degree $m$ and $20m$ vertices of degree 3, which are all of girth 10 for $m \geq 6$. The two 4-cycles in the graph are highlighted.}
\label{fig:H10}
\end{center}
\end{figure}
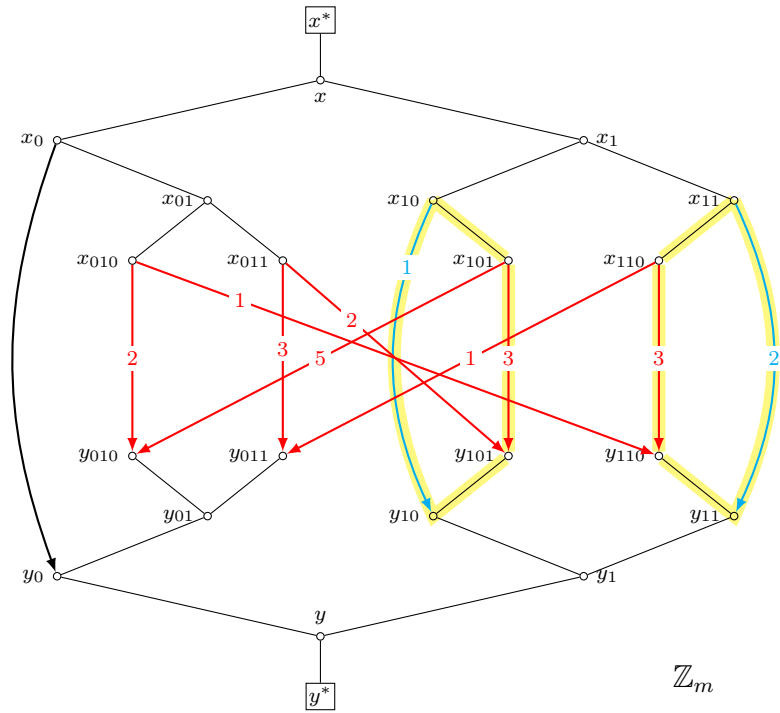

The graph $H_{10}$ is formed by pruning the tree $T_{10}$, by deleting the leaves $x_{101}$ and $y_{101}$ and their incident edges, and then adding an arc between $x_{10}$ and $y_{10}$, in addition to arcs between the remaining leaves, as in the construction of $G_{10}$.
 
\begin{theorem} 
The graph $(H_{10}; m)$ formed as a $\mathbb{Z}_{m}$ lift of the voltage graph shown in Figure \ref{fig:H10} has girth 10 for $m \geq 6$. Moreover, the graphs $(H_{10}, m)$ given us $(\{3,m\};10)$-graphs with $2$ vertices of degree $m$ and $20m$ vertices of degree $3$.
\end{theorem}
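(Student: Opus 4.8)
\emph{Strategy.} My plan is to run the same four-part analysis used for Theorem~\ref{G10old}, now applied to the pruned voltage graph $H_{10}$ of Figure~\ref{fig:H10}. By Lemma~\ref{nonreversing} a non-reversing closed walk lifts to a cycle of $(H_{10};m)$ precisely when its voltage sum is $\equiv 0 \pmod m$ (and no proper sub-walk already sums to $0$), so the entire problem reduces to the arithmetic of voltage sums of short walks. For the upper bound, the pruning used to build $H_{10}$ and the added arcs $(x_{10},y_{10})$, $(x_{11},y_{11})$ leave the branch through $x_{0},y_{0}$ untouched; hence the length-$5$ path $(x^{*},x,x_{0},y_{0},y,y^{*})$ survives and, by Lemma~\ref{lemma:pinnedwalks}, lifts to a $10$-cycle, giving girth at most $10$. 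I would then check that $H_{10}$ is bipartite by the two-colouring of Lemma~\ref{bipartite}: the only new edges, $(x_{10},y_{10})$ and $(x_{11},y_{11})$, each join a vertex at odd distance from $x^{*}$ to one at odd distance from $y^{*}$, and these lie in opposite colour classes, so no odd cycles appear and it suffices to rule out lifted cycles of length $2,4,6,8$.

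\emph{Enumeration of short walks.} The core is a finite case check, carried out with computer assistance as in the proof of Theorem~\ref{G10old}. There are no $2$-cycles, since no two arcs share both endpoints. For single cycles I would enumerate all $4$-, $6$-, and $8$-cycles of $H_{10}$ (doubling each edge to allow both directions) and record their voltage sums; the two $4$-cycles are exactly those highlighted in Figure~\ref{fig:H10}, and for each $4$-cycle I must also test the non-reversing closed walk that goes around it twice. For composite walks, Observation~\ref{lem:nonCycleWalks} limits the relevant configurations of joined cycles that can close up in length $6$ or $8$; since the two $4$-cycles are vertex-disjoint, the dangerous ``two $4$-cycles sharing a vertex or edge'' cases do not occur, and joining them by a path only lengthens the lift. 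Lollipop walks (Lemma~\ref{lollipop}) are harmless because every cycle of $H_{10}$ lies at distance at least $2$ from each pinned vertex, forcing their lifts to have length at least $10$.

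\emph{The excluded moduli and the conclusion.} I expect the delicate point to be pinning down why $m=4,5$ must be removed. The $4$-cycle through $x_{10},y_{10},y_{101},x_{101}$ has voltage sum $-2$, so twice around it sums to $-4\equiv 0\pmod 4$ and lifts to an $8$-cycle; this (together with a $6$-cycle of voltage sum $4$) excludes $m=4$. The value $m=5$ is special because the arc $(x_{101},y_{010})$ of voltage $5$ and the connector arc $(x_{0},y_{0})$ of voltage $0$ become equal modulo $5$: joining them through the tree paths $y_{0},y_{01},y_{010}$ and $x_{101},x_{10},x_{1},x,x_{0}$ produces an $8$-cycle of voltage sum $-5\equiv 0\pmod 5$, so $(H_{10};5)$ has girth at most $8$. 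The enumeration should confirm that every short cycle has voltage sum of absolute value at most $5$; consequently, for each $m\ge 6$ no such sum is divisible by $m$, no short walk lifts to a cycle of length below $10$, and the girth is exactly $10$. Finally, biregularity is immediate from the construction: the pinned vertices $x^{*},y^{*}$ lift to single vertices of degree $m$, while the remaining $20$ vertices of $H_{10}$ each have degree $3$ and lift to $20m$ vertices of degree $3$, so $(H_{10};m)$ is a $(\{3,m\};10)$-graph on $20m+2$ vertices. The main obstacle is ensuring the walk enumeration is exhaustive and correctly bounding all voltage sums, so that these two sporadic exclusions are the only ones.
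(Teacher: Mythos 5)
Your proposal is correct and follows essentially the same route as the paper's proof: girth at most $10$ via a length-$5$ path between the pinned vertices (Lemma \ref{lemma:pinnedwalks}), a computer-assisted enumeration of all $4$-, $6$- and $8$-cycles showing every voltage sum lies in $\{\pm 1,\ldots,\pm 5\}$, the double traversal of the $4$-cycle of sum $\pm 2$ excluding $m=4$, disjointness of the two $4$-cycles ruling out the composite walks of Observation \ref{lem:nonCycleWalks}, and a lollipop check. Your explicit $8$-cycle of voltage sum $-5$ through the voltage-$5$ arc from $x_{101}$ to $y_{010}$ correctly accounts for the exclusion of $m=5$, a point the paper leaves implicit in its computed sum set, so this is a small improvement in explicitness. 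One imprecision to fix: your claim that every cycle of $H_{10}$ lying at distance at least $2$ from each pinned vertex forces lollipop lifts of length at least $10$ is not sound as stated, since a $4$-cycle at distance exactly $2$ would lift to a cycle of length $2\cdot 2+4=8$; the argument goes through because, as the paper's proof observes, both $4$-cycles of $H_{10}$ are at distance $3$ from $x^{*}$ and $y^{*}$ (giving length $2\cdot 3 + 4 = 10$), and the cycles at distance $2$ have length at least $6$ (giving $2\cdot 2 + 6 = 10$), so your enumeration step must verify these distances rather than appeal to the blanket distance-$2$ claim.
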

\begin{proof}
By construction, a shortest path between the pinned vertices lifts to a 10-cycle, so the girth of $(H_{10}, m)$ is at most 10. As before, to show that the girth is equal to 10 for $m \geq 6$, we analyze short cycles, short non-cycle closed walks, and short lollipop walks in the voltage graph, and argue that none of these lift to cycles in the derived graph.

Again using Mathematica \cite{Mathematica}, we considered all possible circuits in the voltage graph with doubled, oriented edges. There are 94 cycles possible: two disjoint 4-cycles (which can be oriented in either direction), highlighted in Figure \ref{fig:H10}, and a collection of 90 6- or 8-cycles. (Since $H$ is bipartite, there are no odd cycles, and all cycles of length 2 are reversing walks.) Summing the voltages along each of the cycles resulted in all of the voltage sums lying in the set $\{-5, -4, -3, -2, -1, 1, 2, 3, 4, 5\}$. Thus, for $m \geq 6$, none of the walks formed by going once around a cycle has a voltage sum of 0, so none of these cycles directly lifts to a cycle in $(H_{10}, m)$. (It is perhaps interesting to note that the red edges, oriented appropriately, form one of the 8-cycles, but the sum of the voltages along that 8-cycle, along with the sums of the voltages along the other 8-cycles, does not sum to $0 \bmod m$ for $m \geq 6$.)

Next, we looked at the 4-cycles individually. The 4-cycle $(x_{101}, y_{101}, y_{10}, x_{10}, x_{101})$ has voltage sum  2 (or -2, depending on orientation), so twice around that 4-cycle lifts to an 8-cycle when $m = 4$. The other 4-cycle sums to 1 (or -1) and so only lifts to larger cycles. (Since twice around a 6-cycle would lift to a 12-cycle, there is no need to consider larger cycles than 4 in this case.)

After that, we considered non-cycle closed walks which might lift to cycles, described in Observation \ref{lem:nonCycleWalks}. Since the two 4-cycles are disjoint, there are no 8-cycles that could be formed by traversing around those cycles in that way. All other joined cycles form walks that are at least length 10.

Finally, we considered lollipop walks. Inspection of $H_{10}$ shows that the shortest lollipop walks consist of paths of length 3 joined to a cycle of length 4 (e.g., $(x^{*},x,x_{1}, x_{10})$ joined to the cycle $(x_{10}, x_{101}, y_{101}, y_{10}, x_{10})$, which lifts to a cycle of length $2\cdot 3 + 4 = 10$, or a path of length 2 joined to a cycle of length 6 (e.g., $(x^{*},x, x_{0})$ joined to $(x_{0}, x_{00},x_{000}, y_{000}, y_{00}, y_{0}, x_{0})$ which lifts to a cycle of length $2 \cdot 2 + 6 = 10$.

\end{proof}

Notice that the voltage assignment of $G_{10}$ produces graphs of girth 10 for $m\geq 4$ except $m=6$, and for the voltage graph $H_{10}$ the voltage assignment produces graphs of girth 10 for $m\geq 6$. Thus, the only examples we know for $m = 4,5$ use $G_{10}$. For $m \geq 6$, while both $(G_{10}, m)$ and $(H_{10}, m)$ are girth 10, $(H_{10}, m)$ has fewer vertices than $(G_{10}, m)$. It is possible that other choices of arcs or other choices of labels could produce a variant of $H_{10}$ that has girth 10 for smaller values of $m$, but so far, such construction has eluded us.
 It is also interesting to note that, as we said before, graphs with the same order of $(G_{10}, m)$ and with $T_{10}$ as based were also given in  \cite{AEJ16}, but in that paper, the authors give general constructions for $m$ that is very large in relation to $3$. Also in that paper, they constructed a $(\{3,4\};10)$-graph with $82$ vertices, which is exactly the order of a graph $(H_{10}, 4)$. However, we have not yet found a voltage assignment for $H_{10}$ that produces a girth 10 graph for $m=4$. Moreover, the $(\{3,4\};10)$-graph constructed in \cite{AEJ16} has $4$ vertices of degree $4$ and $78$ vertices of degree $3$, so clearly it is not a graph in our family.

\section{Families of semicubic graphs of girth $4t$}\label{case4t}
In this section, as in the previous one, we will use a family of voltage graphs $\mathcal{G}_{4t}$ to construct a family of $(\{3;m\};4t)$-biregular graphs called  $(G_{4t}; m)$.
In this case, the voltage graph construction begins by constructing three pruned binary trees, 
each extended by connecting a pinned vertex to the root. 

We begin with constructing a new pruned binary tree $X'_{t}$, which has height $2t-1$, including the vertices $x^{*}$ and $x$. As in the previous section, we begin with a binary tree rooted at a vertex $x$, whose vertices are indexed by bit strings of length at most $2t-2$. We extend the tree upwards by a single pinned vertex $x^{*}$, which becomes the new root of the tree. We identify the vertex $x_{a}$ where $a = \underbrace{0\cdots0}_{t-1}$ which is at distance $t$ from the root $x^{*}$. We then delete all the children of $x_{a}$, that is, all the vertices in the binary tree indexed by bit strings that begin with strings of $t-1$ zeroes. Note that unlike the tree $X_{t}$ from the previous section, $X'_{t}$ has height $2t-1$ rather than $2t$, and by construction, the distance from $x^{*}$ to $x_{a}$ is $t$, and the distance from the level of $x_{a}$ to the level of the other leaves is $t-1$. Moreover, the vertex $x_{a}$ is itself a leaf, unlike in the construction of $X_{t}$, where it had degree 2. Notice that the bit strings of the leaves, except $x_a$, have length $2t-2$.


We then take the pruned tree $Y_{t-1}$ that was constructed in the previous section, which contains a vertex $y_{a}$ where $a = \underbrace{0\cdots0}_{t-2}$ (when $t = 2$ we define $y_{a}$ to be the vertex $y$, indexed by the empty bitstring). In this tree, the distance from $y^{*}$ to $y_{a}$ is $t-1$, and the distance from $y_{a}$ to the level of the leaves of the tree is also $t-1$. We define $Z_{t-1}$ identically, changing the name only to keep track of the second copy. Here the bit strings of the leaves have lengths $2t-3.$


To complete the construction of the tree $T_{4t}$, we join $X'_{t}$, $Y_{t-1}$ and $Z_{t-1}$ by adding edges $(x_{a}, y_{a})$ and $(x_{a}, z_{a})$ with label 0. An example for $t = 3$ is shown in Figure \ref{fig:T-4t}.

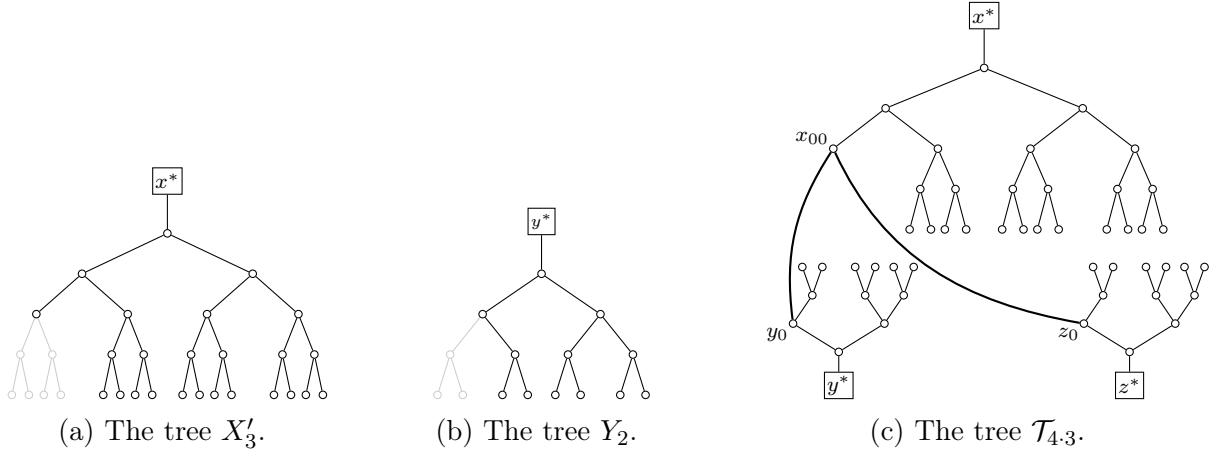
\begin{figure}[htbp]
\begin{center}
\subfloat[The tree $X'_{3}$.]{
\begin{tikzpicture}[vtx/.style={draw, circle, inner sep = 1 pt, font = \scriptsize},bigvtx/.style={draw, , inner sep = 1 pt, font = \scriptsize,minimum height = 10 pt},lbl/.style = {midway, fill = white, inner sep = 2 pt, font=\footnotesize}]
\pgfmathsetmacro{\xx}{30}
\pgfmathsetmacro{\yy}{13}
\pgfmathsetmacro{\r}{32}
\pgfmathsetmacro{\rr}{8}
\pgfmathsetmacro{\rrr}{2}
\node[bigvtx] (xx) {$x^{*}$};
\node[vtx, below = \yy pt of xx] (x){};
\node[vtx, below left = \yy pt and \xx pt of x] (x0) {};
\node[vtx, below right = \yy pt and \xx pt of x] (x1){}; 
\node[vtx, below left = \yy pt and \xx/\rrr pt  of x0,] (x00) {};
\node[vtx, below right = \yy pt and \xx/\rrr pt  of x0] (x01) {};
\node[vtx, below left = \yy pt and \xx/\rrr pt  of x1] (x10) {};
\node[vtx, below right = \yy pt and \xx/\rrr pt  of x1] (x11) {};

\node[vtx, below left = \yy pt and \xx/\rr pt  of x00, opacity = .2] (x000) {};
\node[vtx, below left = \yy pt and \xx/\rr pt  of x01] (x010) {};
\node[vtx, below left = \yy pt and \xx/\rr pt  of x10] (x100) {};
\node[vtx, below left = \yy pt and \xx/\rr pt  of x11] (x110) {};

\node[vtx, below right = \yy pt and \xx/\rr pt  of x00, opacity = .2] (x001) {};
\node[vtx, below right = \yy pt and \xx/\rr pt  of x01] (x011) {};
\node[vtx, below right = \yy pt and \xx/\rr pt  of x10] (x101) {};
\node[vtx, below right = \yy pt and \xx/\rr pt  of x11] (x111) {};

\node[vtx, below left = \yy pt and \xx/\r pt  of x000, opacity = .2] (x0000) {};%
\node[vtx, below left = \yy pt and \xx/\r pt  of x010] (x0100) {};
\node[vtx, below left = \yy pt and \xx/\r pt  of x100] (x1000) {};
\node[vtx, below left = \yy pt and \xx/\r pt  of x110] (x1100) {};
\node[vtx, below left = \yy pt and \xx/\r pt  of x001, opacity = .2] (x0010) {};%
\node[vtx, below left = \yy pt and \xx/\r pt  of x011] (x0110) {};
\node[vtx, below left = \yy pt and \xx/\r pt  of x101] (x1010) {};
\node[vtx, below left = \yy pt and \xx/\r pt  of x111] (x1110) {};
\node[vtx, below right = \yy pt and \xx/\r pt  of x000, opacity = .2] (x0001) {};%
\node[vtx, below right = \yy pt and \xx/\r pt  of x010] (x0101) {};
\node[vtx, below right = \yy pt and \xx/\r pt  of x100] (x1001) {};
\node[vtx, below right = \yy pt and \xx/\r pt  of x110] (x1101) {};
\node[vtx, below right = \yy pt and \xx/\r pt  of x001, opacity = .2] (x0011) {};%
\node[vtx, below right = \yy pt and \xx/\r pt  of x011] (x0111) {};
\node[vtx, below right = \yy pt and \xx/\r pt  of x101] (x1011) {};
\node[vtx, below right = \yy pt and \xx/\r pt  of x111] (x1111) {};

\draw (xx) -- (x);
\foreach \i in {0,1}{
\draw (x) -- (x\i);}
\foreach \j in {0,1}{
\draw (x1) -- (x1\j);
\draw (x10) -- (x10\j);
\draw (x01) -- (x01\j);
\draw (x11) -- (x11\j);

\draw[] (x010) -- (x010\j);
\draw[] (x011) -- (x011\j);
\draw[] (x100) -- (x100\j);
\draw[] (x101) -- (x101\j);
\draw[] (x110) -- (x110\j);
\draw[] (x111) -- (x111\j);
}
\draw (x0) -- (x01);
\draw[] (x0)-- (x00);
\draw[opacity = .2] (x00)-- (x000);
\draw[opacity = .2] (x00)-- (x001);
\draw[opacity = .2] (x000) -- (x0000);
\draw[opacity = .2] (x001) -- (x0010);
\draw[opacity = .2] (x000) -- (x0001);
\draw[opacity = .2] (x001) -- (x0011);


\end{tikzpicture}
}
\hspace{1cm}
\subfloat[The tree $Y_{2}$.]{
\begin{tikzpicture}[vtx/.style={draw, circle, inner sep = 1 pt, font = \tiny},bigvtx/.style={draw, , inner sep = 1 pt, font = \tiny,minimum height = 10 pt}, lbl/.style = {midway, fill = white, inner sep = 2 pt, font=\footnotesize}]
\pgfmathsetmacro{\xx}{20}
\pgfmathsetmacro{\yy}{13}
\pgfmathsetmacro{\r}{8}

\node[bigvtx] (yy) {$y^{*}$};
\node[vtx, below = \yy pt of yy] (y){};
\node[vtx, below left = \yy pt and \xx pt of y] (y0) {};
\node[vtx, below right = \yy pt and \xx pt of y] (y1){}; 
\node[vtx, below left = \yy pt and \xx/2 pt  of y0, opacity = .2] (y00) {};
\node[vtx, below right = \yy pt and \xx/2 pt  of y0] (y01) {};
\node[vtx, below left = \yy pt and \xx/2 pt  of y1] (y10) {};
\node[vtx, below right = \yy pt and \xx/2 pt  of y1] (y11) {};

\node[vtx, below left = \yy pt and \xx/\r pt  of y00, opacity = .2] (y000) {};
\node[vtx, below left = \yy pt and \xx/\r pt  of y01] (y010) {};
\node[vtx, below left = \yy pt and \xx/\r pt  of y10] (y100) {};
\node[vtx, below left = \yy pt and \xx/\r pt  of y11] (y110) {};

\node[vtx, below right = \yy pt and \xx/\r pt  of y00, opacity = .2] (y001) {};
\node[vtx, below right = \yy pt and \xx/\r pt  of y01] (y011) {};
\node[vtx, below right = \yy pt and \xx/\r pt  of y10] (y101) {};
\node[vtx, below right = \yy pt and \xx/\r pt  of y11] (y111) {};

\draw (yy) -- (y);
\foreach \i in {0,1}{
\draw (y) -- (y\i);}
\foreach \j in {0,1}{
\draw (y1) -- (y1\j);
\draw (y10) -- (y10\j);
\draw (y01) -- (y01\j);
\draw (y11) -- (y11\j);
}
\draw (y0) -- (y01);
\draw[opacity = .2] (y0)-- (y00);
\draw[opacity = .2] (y00)-- (y000);
\draw[opacity = .2] (y00)-- (y001);


\end{tikzpicture}
}
\hspace{1cm}
\subfloat[The tree $\mc{T}_{4\cdot 3}$.]{
\begin{tikzpicture}[vtx/.style={draw, circle, inner sep = 1 pt, font = \scriptsize},bigvtx/.style={draw, , inner sep = 1 pt, font = \scriptsize, minimum height = 10 pt},lbl/.style = {midway, fill = white, inner sep = 2 pt, font=\footnotesize}]
\pgfmathsetmacro{\xx}{35}
\pgfmathsetmacro{\yy}{13}
\pgfmathsetmacro{\r}{20}
\pgfmathsetmacro{\rr}{8}
\pgfmathsetmacro{\rrr}{2}
\node[bigvtx] (xx) {$x^{*}$};
\node[vtx, below = \yy pt of xx] (x){};
\node[vtx, below left = \yy pt and \xx pt of x] (x0) {};
\node[vtx, below right = \yy pt and \xx pt of x] (x1){}; 
\node[vtx, below left = \yy pt and \xx/\rrr pt  of x0,label = {[font = \scriptsize, label distance=-5pt]above left:{$x_{00}$}}] (x00) {};
\node[vtx, below right = \yy pt and \xx/\rrr pt  of x0] (x01) {};
\node[vtx, below left = \yy pt and \xx/\rrr pt  of x1] (x10) {};
\node[vtx, below right = \yy pt and \xx/\rrr pt  of x1] (x11) {};

\node[vtx, below left = \yy pt and \xx/\rr pt  of x01] (x010) {};
\node[vtx, below left = \yy pt and \xx/\rr pt  of x10] (x100) {};
\node[vtx, below left = \yy pt and \xx/\rr pt  of x11] (x110) {};

\node[vtx, below right = \yy pt and \xx/\rr pt  of x01] (x011) {};
\node[vtx, below right = \yy pt and \xx/\rr pt  of x10] (x101) {};
\node[vtx, below right = \yy pt and \xx/\rr pt  of x11] (x111) {};

\node[vtx, below left = \yy pt and \xx/\r pt  of x010] (x0100) {};
\node[vtx, below left = \yy pt and \xx/\r pt  of x100] (x1000) {};
\node[vtx, below left = \yy pt and \xx/\r pt  of x110] (x1100) {};
\node[vtx, below left = \yy pt and \xx/\r pt  of x011] (x0110) {};
\node[vtx, below left = \yy pt and \xx/\r pt  of x101] (x1010) {};
\node[vtx, below left = \yy pt and \xx/\r pt  of x111] (x1110) {};
\node[vtx, below right = \yy pt and \xx/\r pt  of x010] (x0101) {};
\node[vtx, below right = \yy pt and \xx/\r pt  of x100] (x1001) {};
\node[vtx, below right = \yy pt and \xx/\r pt  of x110] (x1101) {};
\node[vtx, below right = \yy pt and \xx/\r pt  of x011] (x0111) {};
\node[vtx, below right = \yy pt and \xx/\r pt  of x101] (x1011) {};
\node[vtx, below right = \yy pt and \xx/\r pt  of x111] (x1111) {};

\draw (xx) -- (x);
\foreach \i in {0,1}{
\draw (x) -- (x\i);}
\foreach \j in {0,1}{
\draw (x1) -- (x1\j);
\draw (x10) -- (x10\j);
\draw (x01) -- (x01\j);
\draw (x11) -- (x11\j);

\draw[] (x010) -- (x010\j);
\draw[] (x011) -- (x011\j);
\draw[] (x100) -- (x100\j);
\draw[] (x101) -- (x101\j);
\draw[] (x110) -- (x110\j);
\draw[] (x111) -- (x111\j);
}
\draw (x0) -- (x01);
\draw[] (x0)-- (x00);

\pgfmathsetmacro{\sh}{55}
\pgfmathsetmacro{\ysh}{-140}

\begin{scope}[yshift = \ysh, xshift = -\sh]
\pgfmathsetmacro{\xx}{15}
\pgfmathsetmacro{\yy}{-13}
\pgfmathsetmacro{\rr}{3}
\pgfmathsetmacro{\rrr}{10}

\node[bigvtx, ] (yy) {$y^{*}$};
\node[vtx, below = 1.5*\yy pt of yy] (y){};
\node[vtx, below left = \yy pt and \xx pt of y, label = {[font = \scriptsize, label distance=-5pt]below left:{$y_{0}$}}] (y0) {};
\node[vtx, below right = \yy pt and \xx pt of y] (y1){}; 
\node[vtx, below right = \yy pt and \xx/\rr pt  of y0] (y01) {};
\node[vtx, below left = \yy pt and \xx/\rr pt  of y1] (y10) {};
\node[vtx, below right = \yy pt and \xx/\rr pt  of y1] (y11) {};

\node[vtx, below left = \yy pt and \xx/\rrr pt  of y01] (y010) {};
\node[vtx, below left = \yy pt and \xx/\rrr pt  of y10] (y100) {};
\node[vtx, below left = \yy pt and \xx/\rrr pt  of y11] (y110) {};

\node[vtx, below right = \yy pt and \xx/\rrr pt  of y01] (y011) {};
\node[vtx, below right = \yy pt and \xx/\rrr pt  of y10] (y101) {};
\node[vtx, below right = \yy pt and \xx/\rrr pt  of y11] (y111) {};

\draw (yy) -- (y);
\foreach \i in {0,1}{
\draw (y) -- (y\i);}
\foreach \j in {0,1}{
\draw (y1) -- (y1\j);
\draw (y10) -- (y10\j);
\draw (y01) -- (y01\j);
\draw (y11) -- (y11\j);
}
\draw (y0) -- (y01);

\end{scope}

\begin{scope}[yshift = \ysh, xshift = \sh]
\pgfmathsetmacro{\xx}{15}
\pgfmathsetmacro{\yy}{-13}
\pgfmathsetmacro{\rr}{3}
\pgfmathsetmacro{\rrr}{10}

\node[bigvtx, ] (zz) {$z^{*}$};
\node[vtx, below = 1.5*\yy pt of zz] (z){};
\node[vtx, below left = \yy pt and \xx pt of z, label = {[font = \scriptsize, label distance=-5pt]below left:{$z_{0}$}}] (z0) {};
\node[vtx, below right = \yy pt and \xx pt of z] (z1){}; 
\node[vtx, below right = \yy pt and \xx/\rr pt  of z0] (z01) {};
\node[vtx, below left = \yy pt and \xx/\rr pt  of z1] (z10) {};
\node[vtx, below right = \yy pt and \xx/\rr pt  of z1] (z11) {};

\node[vtx, below left = \yy pt and \xx/\rrr pt  of z01] (z010) {};
\node[vtx, below left = \yy pt and \xx/\rrr pt  of z10] (z100) {};
\node[vtx, below left = \yy pt and \xx/\rrr pt  of z11] (z110) {};

\node[vtx, below right = \yy pt and \xx/\rrr pt  of z01] (z011) {};
\node[vtx, below right = \yy pt and \xx/\rrr pt  of z10] (z101) {};
\node[vtx, below right = \yy pt and \xx/\rrr pt  of z11] (z111) {};

\draw (zz) -- (z);
\foreach \i in {0,1}{
\draw (z) -- (z\i);}
\foreach \j in {0,1}{
\draw (z1) -- (z1\j);
\draw (z10) -- (z10\j);
\draw (z01) -- (z01\j);
\draw (z11) -- (z11\j);
}
\draw (z0) -- (z01);

\end{scope}

\draw[ thick] (x00) to[bend right = 20] (y0);
\draw[ thick] (x00) to[out = -65, in = 180-10] (z0);
\end{tikzpicture}

}

\caption{The trees $X'_{t}$, $Y_{t-1}$, and tree $T_{4t}$, for $t = 3$. (The light gray subgraphs have been pruned.)}
\label{fig:T-4t}
\end{center}
\end{figure}


The total number of vertices in $T_{4t}$ is given by the following analysis: the number of vertices of $X'_{t}$, excluding the vertex $x^*$, is: $\sum_{i=0}^{2t-2}2^i -\sum_{i=1}^{t-1}2^i =1+\sum_{i=t}^{2t-2}2^i = 1+2^{2t-2}+\sum_{i=t}^{2t-3}2^i$; the number of vertices of $Y_{t-1}$ (and also $Z_{t-1}$) is: $\sum_{i=0}^{2t-3}2^i-\sum_{i=0}^{t-2}2^i=\sum_{i=t-1}^{2t-3}2^i.$

Thus, the total number of vertices of the derived graph is $3+m\{1+2^{2t-2}+2^t+3\sum_{i=t}^{2t-3}2^i\}$, where 3 of the vertices are the pinned vertices $x^{*}$, $y^{*}$, $z^{*}$. 


As in the previous section, an element of the family $\mathcal{G}_{4t}$ is any voltage graph formed by adding arcs and voltage assignments to $T_{4t}$ so that all vertices other than the pinned vertices have degree 3; in this paper, we restrict our additions so that each leaf in $X'_{t}$ has one arc going to some leaf in $Y_{t-1}$ and one arc going to some leaf in $Z_{t-1}$. As usual, $(G_{4t}, m)$ is the $\mathbb{Z}_{m}$ lift of  voltage graph $G_{4t}$.


As in the previous section, we prove the following results.\\

\begin{lemma}\label{txygirth8}
The graph $(T_{4t}; m)$ has girth $4t$.
\end{lemma}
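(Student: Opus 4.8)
The plan is to mirror the proof of Lemma~\ref{txygirth}, taking advantage of the fact that $T_{4t}$ is a \emph{tree}: since the voltage graph contains no cycle, neither Lemma~\ref{nonreversing} (closed non-reversing walks) nor Lemma~\ref{lollipop} (lollipops) can manufacture a cycle in the lift, because both require a cycle already present in the voltage graph. Hence the only cycles of $(T_{4t};m)$ are those produced by Lemma~\ref{lemma:pinnedwalks}, i.e.\ lifts of paths joining two distinct pinned vertices. The first thing I would record is precisely this tree structure: $T_{4t}$ is built from $X'_t$, $Y_{t-1}$, and $Z_{t-1}$ by adjoining only the two zero-edges $(x_a,y_a)$ and $(x_a,z_a)$, which create no cycle, and $x_a$ is the unique cut vertex joining the three branches.

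The core computation is the table of pairwise distances among $x^*$, $y^*$, $z^*$. By construction $\operatorname{dist}(x^*,x_a)=t$, while $\operatorname{dist}(y_a,y^*)=\operatorname{dist}(z_a,z^*)=t-1$, and $x_a$ is joined to both $y_a$ and $z_a$ by an edge. Therefore
\[
\operatorname{dist}(x^*,y^*)=t+1+(t-1)=2t,\qquad \operatorname{dist}(x^*,z^*)=2t,
\]
and, since the unique $y^*$--$z^*$ path must pass through the cut vertex $x_a$,
\[
\operatorname{dist}(y^*,z^*)=(t-1)+1+1+(t-1)=2t.
\]
By Lemma~\ref{lemma:pinnedwalks} each of these length-$2t$ paths lifts to a cycle of length $4t$, which gives girth at most $4t$.

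For the matching lower bound I would first observe that deleting $x^*,y^*,z^*$ from $(T_{4t};m)$ leaves $m$ disjoint copies of the core tree $T_{4t}\setminus\{x^*,y^*,z^*\}$, so distinct $\mathbb{Z}_m$-copies of the core are connected only through the pinned vertices. Consequently every cycle of the lift must pass through at least two pinned vertices, and between two consecutive pinned vertices along the cycle it travels a path confined to a single copy. A cycle meeting exactly two pinned vertices $p,q$ is then a lift of the $p$--$q$ path and has length at least $2\operatorname{dist}(p,q)\ge 4t$, while a cycle meeting all three is longer still (length at least $6t$). Hence no cycle is shorter than $4t$, and combined with the upper bound the girth is exactly $4t$.

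The step I expect to be the main obstacle is making the lower-bound bookkeeping airtight in the presence of \emph{three} pinned vertices, rather than the two-pinned, unique-path situation of Lemma~\ref{txygirth}. Specifically, one must rule out a short cycle that enters and exits a single pinned vertex without meeting another, and confirm that using additional pinned vertices or revisiting copies only increases the length. Both reduce to the ``connected only through pinned vertices'' observation above, which I would state as an explicit preliminary before carrying out the distance count.
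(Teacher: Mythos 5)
Your proposal is correct and follows essentially the same route as the paper: the same distance computations giving paths $P$ and $P'$ of length $2t$ between the pinned-vertex pairs, Lemma~\ref{lemma:pinnedwalks} lifting them to $4t$-cycles, and the tree structure of $T_{4t}$ ruling out shorter cycles. Your only departure is in rigor, not strategy --- where the paper dismisses shorter cycles with the single sentence ``no other shorter paths in $T_{4t}$ lift to cycles, since $T_{4t}$ is a tree,'' you make the bookkeeping explicit via the observation that deleting the three pinned vertices disconnects the lift into $m$ copies of the core tree, which correctly handles the three-pinned-vertex subtleties (cycles through one, two, or all three of $x^{*}, y^{*}, z^{*}$) that the paper leaves implicit.
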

\begin{proof}
By construction, since the distance in $X'_{t}$ between $x^{*}$ and $x_a$ is $t$, and the distance in $Y_{t-1}$ between $y^{*}$ and $y_a$ is $t-1$ (analogously in $Z_{t-1}$ between  $z^{*}$ and $z_a$), there exists a minimal path $P=(x^{*},\ldots,x_a,y_a,\ldots,y^{*})$ of length $2t$ between the two pinned vertices $x^{*}$ and $y^{*}$ (and similarly a path between $x^{*}$ and $z^{*}$). Also, there exists another minimal path $P'=(y^{*},\ldots,y_a,x_a,z_a,\ldots,z^{*})$ between $y^{*}$ and $z^{*}$ of length $2(t-1)+2=2t$. 

By Lemma \ref{lemma:pinnedwalks}, the paths $P$ and $P'$ lift to cycles of length $4t$ in $(T_{4t}, m)$.
no other shorter paths in $T_{4t}$ lift to cycles, since $T_{4t}$ is a tree. In consequence the girth of $(T_{4t}; m)$ is $4t$.
\end{proof}
As $(T_{4t}; m)$ is a subgraph of $(G_{4t}; m)$, as in the previous section, we have the following result: 

\begin{lemma}
The graph $(G_{4t}; m)$ has girth at most $4t$.
\end{lemma}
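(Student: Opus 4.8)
The plan is to mirror the argument used for Lemma~\ref{girth4t+2} and reduce immediately to the girth computation already established for the underlying tree. The key structural observation is that the voltage graph $G_{4t}$ is obtained from the voltage tree $T_{4t}$ by \emph{adding} labeled arcs between the leaves of $X'_{t}$, $Y_{t-1}$, and $Z_{t-1}$; no edge of $T_{4t}$ is removed in this process. Consequently $T_{4t}$ is a subgraph of $G_{4t}$ on the same vertex set, and since lifting respects subgraph inclusion (the derived graph of a subgraph embeds as a subgraph of the derived graph, with the pinned vertices $x^{*}, y^{*}, z^{*}$ lifting identically in both), the derived graph $(T_{4t}; m)$ is a subgraph of $(G_{4t}; m)$.

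First I would invoke Lemma~\ref{txygirth8}, which gives that $(T_{4t}; m)$ has girth exactly $4t$; in particular it contains a cycle $C$ of length $4t$, arising as the lift of the minimal path $P = (x^{*}, \ldots, x_a, y_a, \ldots, y^{*})$ between two pinned vertices. Because $(T_{4t}; m)$ is a subgraph of $(G_{4t}; m)$, this same cycle $C$ persists as a cycle of length $4t$ in $(G_{4t}; m)$. Adding the extra arcs between leaves can only introduce further (possibly shorter) cycles into the lift; it cannot destroy $C$. Hence the girth of $(G_{4t}; m)$ is at most the length of $C$, namely $4t$, which is exactly the claim.

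There is essentially no obstacle here: the statement is an immediate corollary of the subgraph relationship together with Lemma~\ref{txygirth8}, in direct parallel with how Lemma~\ref{girth4t+2} followed from Lemma~\ref{txygirth}. The only point worth a moment's care is confirming that the lift genuinely respects the inclusion---that the pinned vertices and the $4t$-cycle through two of them survive intact when passing from $(T_{4t}; m)$ to $(G_{4t}; m)$. This is immediate, since $G_{4t}$ and $T_{4t}$ share the same vertices and the same pinned-vertex structure (the new arcs only join already-existing leaves), so their $\mathbb{Z}_{m}$ lifts agree on every vertex and edge of $(T_{4t}; m)$.
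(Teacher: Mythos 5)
Your proposal is correct and matches the paper's argument exactly: the paper also observes that $(T_{4t}; m)$ is a subgraph of $(G_{4t}; m)$ and invokes Lemma~\ref{txygirth8} to conclude that a $4t$-cycle, coming from the lift of a shortest path between pinned vertices, survives in the larger graph. Your extra remark verifying that the lift respects the subgraph inclusion (since the added arcs only join existing leaves) is a sound elaboration of what the paper leaves implicit.
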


Now, we will prove that: 
\begin{lemma}\label{bipartite_}
The family $\mathcal{G}_{4t}$ is bipartite.
\end{lemma}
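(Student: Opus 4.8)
The plan is to produce an explicit 2-coloring of the vertices of an arbitrary member $G_{4t}\in\mathcal{G}_{4t}$ and verify that every edge is bichromatic, in direct analogy with the proof of Lemma \ref{bipartite}. Bipartiteness depends neither on the voltage labels nor on the precise arc assignment, so it suffices to 2-color the underlying undirected graph; the only structural facts I will use are those already recorded in the construction of $T_{4t}$.

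First I would stratify each of the three trees by distance from its pinned root. Let $\mathcal{X}'_{j}$, $\mathcal{Y}_{j}$, and $\mathcal{Z}_{j}$ be the sets of vertices of $X'_{t}$, $Y_{t-1}$, and $Z_{t-1}$ at distance $j$ from $x^{*}$, $y^{*}$, and $z^{*}$, respectively. Writing $B_{\mathcal{X}'}=\bigcup_{j \text{ even}}\mathcal{X}'_{j}$ and $B_{\mathcal{X}''}=\bigcup_{j \text{ odd}}\mathcal{X}'_{j}$, and similarly for $Y$ and $Z$, I would set the two color classes to be $\mathcal{B}_{1}=B_{\mathcal{X}'}\cup B_{\mathcal{Y}'}\cup B_{\mathcal{Z}'}$ and $\mathcal{B}_{2}=B_{\mathcal{X}''}\cup B_{\mathcal{Y}''}\cup B_{\mathcal{Z}''}$. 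It is worth emphasizing that this is a \emph{direct} pairing of the parities, in contrast with the \emph{cross} pairing forced in Lemma \ref{bipartite}: there the leaves of $X_{t}$ and $Y_{t}$ both lay at distance $2t-1$ (odd), whereas here the leaves of $X'_{t}$ lie at odd distance $2t-1$ while the leaves of $Y_{t-1}$ and $Z_{t-1}$ lie at even distance $2t-2$.

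The verification then splits by edge type. Every edge internal to one of the three trees joins consecutive levels, hence joins $\mathcal{B}_{1}$ to $\mathcal{B}_{2}$ by construction. The two connecting edges $(x_{a},y_{a})$ and $(x_{a},z_{a})$ join $x_{a}$, at distance $t$ from $x^{*}$, to $y_{a}$ and $z_{a}$, at distance $t-1$ from $y^{*}$ and $z^{*}$; since $t$ and $t-1$ have opposite parity, each such edge is bichromatic. Finally, every arc added to $T_{4t}$ runs from a leaf of $X'_{t}$ (bit string of length $2t-2$, hence at odd distance $2t-1$, so in $\mathcal{B}_{2}$) to a leaf of $Y_{t-1}$ or $Z_{t-1}$ (bit string of length $2t-3$, hence at even distance $2t-2$, so in $\mathcal{B}_{1}$), and is therefore bichromatic as well. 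Since every edge joins $\mathcal{B}_{1}$ to $\mathcal{B}_{2}$, the coloring is proper and $G_{4t}$ is bipartite; as this holds for every member of the family, $\mathcal{G}_{4t}$ is bipartite.

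The argument is routine parity bookkeeping, so there is no serious obstacle; the single point demanding care — and the only genuine divergence from the $4t+2$ case — is the observation that the two families of leaves joined by arcs sit at opposite parities. This is what dictates direct rather than cross pairing, and it is also what rules out monochromatic arcs: all leaves of $Y_{t-1}$ and $Z_{t-1}$ share one color, so it matters that, by construction, no arc joins a $Y$-leaf to a $Z$-leaf (each arc has exactly one endpoint among the leaves of $X'_{t}$).
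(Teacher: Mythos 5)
Your proof is correct and takes essentially the same approach as the paper: the paper's proof likewise stratifies $X'_{t}$, $Y_{t-1}$, and $Z_{t-1}$ by distance from the pinned roots and uses exactly your direct pairing $\mathcal{B}_{1}=B_{\mathcal{X}'}\cup B_{\mathcal{Y}'}\cup B_{\mathcal{Z}'}$, $\mathcal{B}_{2}=B_{\mathcal{X}''}\cup B_{\mathcal{Y}''}\cup B_{\mathcal{Z}''}$. Your explicit edge-by-edge verification (tree edges, the bridges $(x_{a},y_{a})$ and $(x_{a},z_{a})$, and the leaf arcs, including the remark that no arc joins a $Y$-leaf to a $Z$-leaf) spells out parity details the paper leaves implicit, but it is the same argument.
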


\begin{proof}
As in the proof of Lemma \ref{bipartite}, let $\mathcal{X}_j$ be the set of vertices of the pruned binary tree $X'_{t}$ contained in the voltage graph $\mc{G}_{4t}$ at distance $j$ from $x^{*}$ and the same for the trees $Y_{t-1}$ and $Z_{t-1}$. Observe that $B_{\mathcal{X}^{'}}=\{\mathcal{X}_0, \mathcal{X}_2, ..., \mathcal{X}_{2t-2}\}$ and $B_{\mathcal{X}^{''}}=\{\mathcal{X}_1, \mathcal{X}_3, ..., \mathcal{X}_{2t-1}\}$ are two disjoint subsets of vertices of $X'_{t}$ whose union is $V(X'_{t})$. 

Similarly,  $B_{\mathcal{Y}^{'}}=\{\mathcal{Y}_0, \mathcal{Y}_2, ..., \mathcal{Y}_{2t-2}\}$ and $B_{\mathcal{Y}^{''}}=\{\mathcal{Y}_1, \mathcal{Y}_3, ..., \mathcal{Y}_{2t-3}\}$ are two disjoint subsets of the vertices of $Y_{t-1}$ whose union is $V(Y_{t-1})$ and analogously for $Z_{t-1}$. Thus, the bipartite classes of $\mc{G}_{4t}$ are $\mathcal{B}_1=B_{\mathcal{X}^{'}}\cup B_{\mathcal{Y}^{'}}\cup B_{\mathcal{Z}^{'}}$ and $\mathcal{B}_2=B_{\mathcal{X}^{''}}\cup B_{\mathcal{Y}^{''}}\cup B_{\mathcal{Z}^{''}}$. 

\end{proof}

\subsubsection{A family of semicubic graphs of girth $8$}
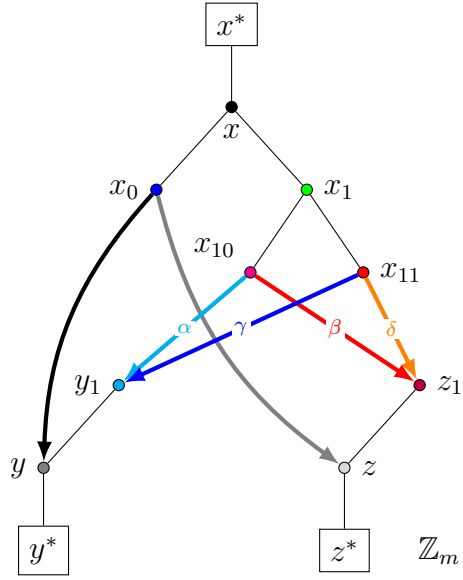
\begin{figure}[h!]
\centering

\begin{tikzpicture}[
level distance=11mm,
level 1/.style={sibling distance=3 cm},
level 2/.style={sibling distance=2cm},
level 3/.style={sibling distance= 1.5cm},
level 4/.style={sibling distance= 1 cm}]
\tikzset{vtx/.style={draw, fill = black, circle, align = center, minimum size=1.5 pt, inner sep = 1.5 pt}}
\tikzset{lbl/.style={font = \scriptsize, fill = white, inner sep = 1 pt}}

\tikzset{bigvtx/.style={draw, align = center, minimum size=1 pt, inner sep = 4 pt}}

\tikzset{arc/.style = {red, ultra thick, near start}}

\tikzset{edge from parent/.style={draw, edge from parent path={(\tikzparentnode) -- (\tikzchildnode)}}}

\node[bigvtx] (xx) {$x^{*}$}
	child[] {node[vtx,label = below:${x}$ 
	] (x) {}
		child[] {node[vtx,fill = blue,label =   left:${x_{0}}$ 
		] (x0) {}
			}	
		child[] {node[vtx,fill = green,label = right:$x_{1}$ 
		] (x1) {}
			child[]{node[vtx,fill= magenta,label = {above left=1mm}:${x_{10}}$] (x10) {}
			}
			child[]{node[vtx,fill = red,label = right:${x_{11}}$] (x11) {}
			}
			}		
		};
		
\begin{scope}[grow = up, yshift = -7 cm, xshift = -2.5 cm] 
\node[bigvtx] (yy) {$y^{*}$}
	child[] {node[vtx,fill = gray,label = left:${y}$ 
	] (y) {}
		child[] {node[vtx,fill = cyan,label =   left:${y_{1}}$ ] (y0) {}			}	
			child{node[draw = none](y1) {}edge from parent[draw = none]}
		};
		\end{scope}
		
		\begin{scope}[grow = up, yshift = -7 cm, xshift=1.5cm] 
\node[bigvtx] (zz) {$z^{*}$}
	child[] {node[vtx,fill = gray!30,label = right:${z}$ ] (z) {}
		child[] {node[vtx,fill = purple,label =   right:${z_{1}}$ ] (z0) {}
			}	
			child{node[draw = none](z1) {}edge from parent[draw = none]}
		};
		\end{scope}

\draw[ arc,black, ->] (x0) to[bend right = 20]  (y);
\draw[ arc,gray, ->] (x0) to[bend right = 20]  (z);
\draw[arc, ->, cyan] (x10) to[bend right = 0] node[lbl, midway] {$\alpha$} 
		(y0);
\draw[arc, ->, red] (x10) to[bend left = 0] node[lbl, pos = .5] {$\beta$} 
	(z0);
	\draw[arc, ->, blue] (x11) to[bend left = 0] node[lbl, pos = .5] {$\gamma$} 
	(y0);
		\draw[arc, ->, orange] (x11) to[bend left = 0] node[lbl, midway] {$\delta$} 
	(z0);
\node[right = 5mm of zz] {$\mathbb{Z}_{m}$};
\end{tikzpicture}

\caption{An assignment of arcs and voltages $\alpha$, $\beta$, $\gamma$, $\delta$ to $\mc{G}_8$ that corresponds to a family of semi-regular graphs of girth $8$, when $\alpha$, $\beta$, $\gamma$, $\delta$ are constrained as in Theorem \ref{thm:g8}}
\label{g8voltage}
\end{figure}

\begin{figure}[h!]
\centering
\subfloat[$(G_{8}; 3)$ with $\alpha = \delta=1$ and $\beta = \gamma = 2$...]{
\begin{tikzpicture}[
level distance=7mm,
level 1/.style={sibling distance=1 cm},
level 2/.style={sibling distance=1cm},
level 3/.style={sibling distance= 1cm},
level 4/.style={sibling distance= 1 cm}, scale = .65, every node/.append style={transform shape}]
\tikzset{vtx/.style={draw, fill = black, circle, align = center, minimum size=2 pt, inner sep = 2 pt}}
\tikzset{lbl/.style={font = \tiny, fill = white, inner sep = 1 pt}}

\tikzset{bigvtx/.style={draw, align = center, minimum size=1 pt, inner sep = 4 pt}}

\tikzset{arc/.style = {red, ultra thick, near start}}

\tikzset{edge from parent/.style={draw, edge from parent path={(\tikzparentnode) -- (\tikzchildnode)}}}

\node[bigvtx] (xx) at (5.5,1.5) {$x^{*}$};

\foreach \j in {0,1,2}{
\pgfmathparse{\j*3.5+2}
\begin{scope}[xshift = \pgfmathresult cm,]
\node[vtx,label = above left:${x^{\j}}$ ] (x\j) {}{
		child[] {node[vtx,fill = blue,label =   left:${x_{0}^{\j}}$ 
		] (x0\j) {}
			}	
		child[] {node[vtx,fill = green,label = right:$x_{1}^{\j}$ 
		] (x1\j) {}
			child[]{node[vtx,fill= magenta,label = {above left=1mm}:${x_{10}^{\j}}$] (x10\j) {}
			}
			child[]{node[vtx,fill = red,label = right:${x_{11}^{\j}}$] (x11\j) {}
			}
			}		
		};
		\end{scope}
		}

\node[bigvtx] (yy) at (2.5, -6) {$y^{*}$};

\foreach \j in {0,1,2}{
\pgfmathparse{ 1 + 1.1*\j}
\begin{scope}[grow = up, yshift = -5 cm, xshift = \pgfmathresult cm] 
\node[vtx,fill = gray,label = left:${y^{\j}}$ ] (y\j) {}{
		child[] {node[vtx,fill = cyan,label =   left:${y_{1}^{\j}}$ ] (y0\j) {}			}	
			child{node[draw = none](y1\j) {}edge from parent[draw = none]}
		};
		\end{scope}
		}

\node[bigvtx] (zz) at (9, -6) {$z^{*}$};

\foreach \j in {0,1,2}{		
\pgfmathparse{1.1*\j +8}
\begin{scope}[grow = up, yshift = -5 cm, xshift = \pgfmathresult cm] 	

\node[vtx,fill = gray!30,label = right:${z^{\j}}$ ] (z\j) {}{
		child[] {node[vtx,fill = purple,label =   right:${z_{1}^{\j}}$ ] (z0\j) {}
			}	
			child{node[draw = none](z1\j) {}edge from parent[draw = none]}
		};
		\end{scope}
		}

\foreach \j in {0,1,2}{
\draw (xx) -- (x\j);
\draw (yy) -- (y\j);
\draw (zz) -- (z\j);
\draw[ arc,black, ] (x0\j) to[bend right = 20]  (y\j);
\draw[ arc,gray, ] (x0\j) to[bend right = 20]  (z\j);
\draw[arc, , cyan] let \n1 = {int(mod(\j+1,3))} in (x10\j) to[bend right = 0] 
		(y0\n1);
\draw[arc, , red]let \n1 = {int(mod(\j+2,3))} in (x10\j) to[bend left = 0] 
	(z0\n1);
	\draw[arc, , blue] let \n1 = {int(mod(\j+2,3))} in (x11\j) to[bend left = 0] 
	(y0\n1);
		\draw[arc, , orange] let \n1 = {int(mod(\j+1,3))} in (x11\j) to[bend left = 0] 
	(z0\n1);
	}

\end{tikzpicture}
}
\subfloat[...is isomorphic to the Tutte 8-cage]{
\begin{tikzpicture}[scale = .6,every node/.append style={transform shape}]

\tikzset{vtx/.style={draw, fill = black, circle, align = center, minimum size=2 pt, inner sep = 2 pt}}
\tikzset{lbl/.style={font = \tiny, fill = white, inner sep = 1 pt}}

\tikzset{bigvtx/.style={draw, align = center, minimum size=1 pt, inner sep = 1 pt}}
\tikzset{arc/.style = {red, ultra thick, near start}}

\def\rr{4}
\node[bigvtx,] (xx) at (360*0/30:\rr) {$x^{*}$};
\node[vtx,label =  {360*1/30, label distance = -6pt}:${x^{0}}$ ] (x0)  at (360*1/30:\rr) {};
\node[vtx,fill = blue,label =  {360*2/30, label distance = -6pt}:${x_{0}^{0}}$]  (x00) at (360*2/30:\rr) {};
\node[vtx,fill = gray!30,label = {360*3/30, label distance = -6pt}:${z^{0}}$ ]  (z0) at (360*3/30:\rr) {};
\node[vtx,fill = purple,label =   {360*4/30, label distance = -6pt}:${z_{1}^{0}}$ ] (z00)at (360*4/30:\rr) {};
\node[vtx,fill= magenta,label = {360*5/30, label distance = -6pt}:${x_{10}^{1}}$] (x101) at (360*5/30:\rr) {};
\node[vtx,fill = cyan,label =   {360*6/30, label distance = -6pt}:${y_{1}^{2}}$ ] (y02)at (360*6/30:\rr) {};	
\node[vtx,fill = gray,label = {360*7/30, label distance = -6pt}:${y^{2}}$ ] (y2) at (360*7/30:\rr){};
\node[vtx,fill = blue,label =   {360*9/30, label distance = -6pt}:${x_{0}^{2}}$] (x02)at (360*8/30:\rr) {};
\node[vtx,fill = gray!30,label = {360*10/30, label distance = -6pt}:${z^{2}}$ ] (z2)at (360*9/30:\rr) {};
\node[bigvtx] (zz) at (360*10/30:\rr){$z^{*}$};
\node[vtx,fill = gray!30,label = {360*11/30, label distance = -6pt}:${z^{1}}$ ] (z1)at (360*11/30:\rr) {};
\node[vtx,fill = blue,label =   {360*12/30, label distance = -6pt}:${x_{0}^{1}}$ ] (x01)at (360*12/30:\rr) {};
\node[vtx,label = {360*13/30, label distance = -6pt}:${x^{1}}$ ] (x1) at (360*13/30:\rr){};
\node[vtx,fill = green,label = {360*14/30, label distance = -5pt}:$x_{1}^{1}$ ] (x11) at (360*14/30:\rr) {};
\node[vtx,fill = red,label = {360*15/30, label distance = -5pt}:${x_{11}^{1}}$] (x111) at (360*15/30:\rr) {};
\node[vtx,fill = purple,label =   {360*16/30, label distance = -6pt}:${z_{1}^{2}}$ ] (z02) at (360*16/30:\rr){};
\node[vtx,fill= magenta,label = {360*17/30, label distance = -6pt}:${x_{10}^{0}}$] (x100)at (360*17/30:\rr) {};
\node[vtx,fill = green,label = {360*18/30, label distance = -6pt}:$x_{1}^{0}$ ] (x10) at (360*18/30:\rr){};
\node[vtx,fill = red,label = {360*19/30, label distance = -6pt}:${x_{11}^{0}}$] (x110) at (360*19/30:\rr){};
\node[vtx,fill = purple,label =   {360*20/30, label distance = -6pt}:${z_{1}^{1}}$ ] (z01) at (360*20/30:\rr){};
\node[vtx,fill= magenta,label = {360*21/30, label distance = -6pt}:${x_{10}^{2}}$] (x102) at (360*21/30:\rr) {};
\node[vtx,fill = cyan,label =   {360*22/30, label distance = -5pt}:${y_{1}^{0}}$ ] (y00)at (360*22/30:\rr) {};
\node[vtx,fill = gray,label = {360*23/30, label distance = -6pt}:${y^{1}}$ ] (y0) at (360*23/30:\rr){};
\node[bigvtx] (yy) at (360*24/30:\rr) {$y^{*}$};
\node[vtx,fill = gray,label = {360*25/30, label distance = -6pt}:${y^{1}}$ ] (y1) at (360*25/30:\rr) {};
\node[vtx,fill = cyan,label =   {360*26/30, label distance = -6pt}:${y_{1}^{0}}$ ] (y01)at (360*26/30:\rr) {};
\node[vtx,fill = red,label = {360*27/30, label distance = -6pt}:${x_{11}^{2}}$] (x112)at (360*27/30:\rr) {};
\node[vtx,fill = green,label = {360*28/30, label distance = -6pt}:$x_{1}^{2}$ ] (x12) at (360*28/30:\rr) {};
\node[vtx,label = {360*29/30, label distance = -6pt}:${x^{2}}$ ] (x2) at (360*29/30:\rr) {};

\foreach \j in {0,1,2}{
\draw (xx) -- (x\j);
\draw (yy) -- (y\j);
\draw (zz) -- (z\j);
\draw (x\j) -- (x0\j);
\draw (x\j) -- (x1\j);
\draw (x1\j) -- (x10\j);
\draw (x1\j) -- (x11\j);
\draw (y\j) -- (y0\j);
\draw (z\j) -- (z0\j);
\draw[ arc,black, ] (x0\j) to[bend right = 0]  (y\j);
\draw[ arc,gray, ] (x0\j) to[bend right = 0]  (z\j);
\draw[arc, , cyan] let \n1 = {int(mod(\j+1,3))} in (x10\j) to[bend right = 0] 
		(y0\n1);
\draw[arc, , red]let \n1 = {int(mod(\j+2,3))} in (x10\j) to[bend left = 0] 
	(z0\n1);
	\draw[arc, , blue] let \n1 = {int(mod(\j+2,3))} in (x11\j) to[bend left = 0] 
	(y0\n1);
		\draw[arc, , orange] let \n1 = {int(mod(\j+1,3))} in (x11\j) to[bend left = 0] 
	(z0\n1);
	}
	
\end{tikzpicture}
}
\caption{The graph $(G_{8}; 3)$ with $\alpha = \delta=1$ and $\beta = \gamma = 2$ is the Tutte 8-cage}
\label{fig:g8Txy}
\end{figure}
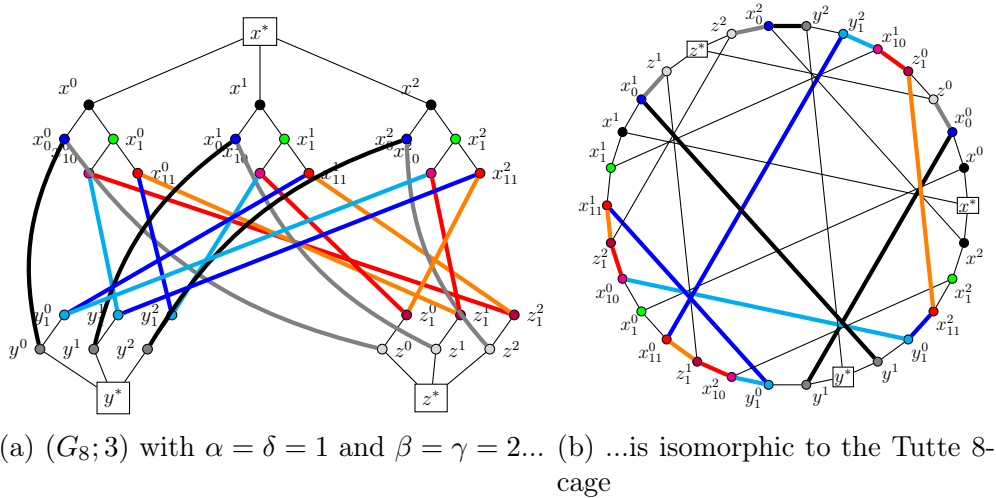

We will give a voltage assignment of $G_{4t}$ for $t=2$ and we will prove that we can obtain a family of semicubic graphs of girth $8$. These graphs are a generalization of the graphs given in \cite{ABLM13}; specifically, they are obtained if, in the following Theorem, we have that $\alpha=3$, $\beta=1$, $\gamma=2 $, and $\delta=3$. 
\begin{theorem}\label{thm:g8}
The  graphs $(G_{8}; m)$ given by the voltage graph $G_8$ with voltage and arc assignment $(x_{10}y_{1})=\alpha$, $(x_{10}z_{1})=\beta$, $(x_{11}y_{1})=\gamma$ and $(x_{11}z_{1})=\delta$ with $\alpha\neq \beta\neq 0$, $\gamma\neq \delta\neq 0$ and $(\alpha-\beta)-(\gamma-\delta)\neq 0 \mod{m}$  has girth $8$ for $m\geq 3$. Moreover, it is a $(\{3,m\};8)$-graph with three vertices of degree $m$ and $9m$ vertices of degree $3$.
\end{theorem}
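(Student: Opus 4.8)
The plan is to follow the template of Theorems~\ref{G6} and~\ref{G10old}. First I would record the easy structural facts: the non-pinned vertices of $G_8$ are the five vertices $x,x_0,x_1,x_{10},x_{11}$ of $X'_2$ together with $y,y_1$ of $Y_1$ and $z,z_1$ of $Z_1$, so there are nine of them, each lifting to $m$ vertices of degree $3$, while $x^*,y^*,z^*$ lift to single vertices of degree $m$; hence $(G_8;m)$ is a $(\{3,m\};8)$-graph of order $9m+3$. Next, since $(T_8;m)$ is a subgraph of $(G_8;m)$, Lemma~\ref{txygirth8} (with $t=2$) gives girth at most $8$, and a shortest path between two pinned vertices already lifts to an $8$-cycle. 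By Lemma~\ref{bipartite_} the graph is bipartite, so it remains to show that no non-reversing closed walk of length $2$, $4$ or $6$ in $G_8$ has voltage sum $\equiv 0 \pmod m$; by Lemma~\ref{nonreversing} this is exactly what prevents a short lift.

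The core computation is the enumeration of short cycles through the ``$K_{2,2}$-type'' gadget in which both leaves $x_{10},x_{11}$ attach to the single leaves $y_1$ and $z_1$ (this coincidence is forced because for $t=2$ the trees $Y_1,Z_1$ have only one leaf each). I would first list the $4$-cycles. There are exactly three: $x_1x_{10}y_1x_{11}$ with voltage sum $\alpha-\gamma$, $x_1x_{10}z_1x_{11}$ with sum $\beta-\delta$, and the ``diagonal'' $x_{10}y_1x_{11}z_1$ with sum $(\alpha-\beta)-(\gamma-\delta)$. The last is nonzero by hypothesis; the first two are nonzero provided $\alpha\neq\gamma$ and $\beta\neq\delta$, which I would include among the constraints.

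I would then list the $6$-cycles. Those passing through $x$ are $x\,x_1\,x_{1i}\,(y_1\text{ or }z_1)\,(y\text{ or }z)\,x_0$ and have voltage sums $\alpha,\beta,\gamma,\delta$; those avoiding $x$ are $x_0\,y\,y_1\,x_{1i}\,z_1\,z$ with sums $\beta-\alpha$ and $\delta-\gamma$. The hypotheses $\beta\neq0$, $\delta\neq0$, $\alpha\neq\beta$, $\gamma\neq\delta$ (together with $\alpha\neq0$, $\gamma\neq0$) make all six nonzero mod $m$. Finally I would dispatch the remaining walks using Observation~\ref{lem:nonCycleWalks} and Lemma~\ref{lollipop}: a non-cycle closed walk of length $6$ would require joining a $4$-cycle to a $2$-cycle, but $G_8$ has no $2$-cycles (no parallel arcs), and the shortest lollipop---a length-$2$ path from $x^*$ to the $4$-cycle through $x_1$---lifts to a cycle of length $2\cdot2+4=8$, so neither lowers the girth. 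Concluding, for all $m\geq3$ every short non-reversing walk has nonzero voltage sum, so $(G_8;m)$ has girth exactly $8$; as a check, both the specialization $(\alpha,\beta,\gamma,\delta)=(3,1,2,3)$ recovering the family of~\cite{ABLM13} and $(\alpha,\beta,\gamma,\delta)=(1,2,2,1)$ with $m=3$ recovering the Tutte cage satisfy every one of these nonvanishing conditions.

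The main obstacle is getting the enumeration complete and the condition set correct. The single-leaf coincidence forces three $4$-cycles (not one) and six $6$-cycles, so the displayed hypotheses alone do not obviously suffice: they control the diagonal $4$-cycle and the four ``difference'' sums, but one must additionally require $\alpha,\gamma\neq0$ (for the $6$-cycles of sum $\alpha,\gamma$) and $\alpha\neq\gamma$, $\beta\neq\delta$ (for the two straight $4$-cycles through $x_1$). A safe way to ensure nothing is missed---mirroring the girth-$10$ proofs---is to have a computer algebra system enumerate every closed walk of length at most $6$ in the doubled, oriented voltage graph and confirm that each resulting voltage sum lies in a set excluded by the constraints for all $m\geq3$.
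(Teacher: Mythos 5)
Your proposal follows the paper's proof of Theorem \ref{thm:g8} almost exactly: the same reduction via bipartiteness (Lemma \ref{bipartite_}) and Observation \ref{lem:nonCycleWalks} to cycles of length $4$ and $6$, the same enumeration of those cycles (your three undirected $4$-cycles and six undirected $6$-cycles are precisely the paper's six and twelve directed ones), and the same upper bound via Lemma \ref{txygirth8}. The one substantive difference is the issue you flag about the hypothesis set, and you are right to flag it: the paper's own computed list of voltage sums contains $\pm(\alpha-\gamma)$, $\pm(\beta-\delta)$, $\pm\alpha$ and $\pm\gamma$, so the conditions $\alpha\not\equiv\gamma$, $\beta\not\equiv\delta$, $\alpha\not\equiv 0$ and $\gamma\not\equiv 0 \pmod m$ are genuinely necessary (if $\alpha\equiv\gamma$, the $4$-cycle $(x_1,x_{10},y_1,x_{11})$ has voltage sum $0$ and lifts to a $4$-cycle in $(G_8;m)$), yet the theorem statement and the proof's concluding sentence impose only $\alpha\neq\beta\neq 0$, $\gamma\neq\delta\neq 0$ and the diagonal condition $(\alpha-\beta)-(\gamma-\delta)\not\equiv 0$. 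This is an oversight in the paper that your stronger condition set repairs; the paper's exemplar $\alpha=\delta=1$, $\beta=\gamma=2$ happens to satisfy the full list, which is why the published family is unharmed. One small correction to your closing check: the specialization $(\alpha,\beta,\gamma,\delta)=(3,1,2,3)$ recovering \cite{ABLM13} violates both $\alpha\not\equiv 0$ and the diagonal condition when $m=3$ (both reduce to $3\equiv 0$), so it is admissible only for $m\geq 4$, not for "every one of these conditions" at all $m\geq 3$. You are also slightly more thorough than the printed proof in explicitly dispatching the shortest lollipop and the pinned-to-pinned paths, which the paper handles in its girth-$6$ and girth-$10$ arguments but leaves implicit here.
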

\begin{proof}First, note that by Observation \ref{lem:nonCycleWalks}, since $G_{8}$ has no 2-cycles, there are no closed non-cycle non-reversing walks that lift to cycles of length 4 or 6. Thus,
since $(G_8; m)$ is bipartite, it is enough to show that every cycle of length $4$ or $6$ in $G_8$ lifts to a cycle of length at least $8$ in $(G_8; m)$. 

We enumerated all (directed) cycles of length 4 and 6 in $G_{8}$ directly, reversing arrows as needed, and summed their voltages: there are 18 such cycles, six 4-cycles and 12 6-cycles. For example, 
 $\overunderbraces{&\br{3}{\beta}& &\br{3}{\gamma}}%
    {(&x_{10}&, &z_{0}&,& x_{11}&, &y_{0}&, & x_{10}&)}%
    {& &&\br{3}{-\delta}& &\br{3}{-\alpha}}$
is a 4-cycle with voltage sum $-\alpha + \beta +\gamma-\delta$.

Summing the voltages in each cycle, we observed that the collection of possible voltage sums is the following set: $\{-\alpha +\beta +\gamma -\delta
   ,\alpha -\beta -\gamma +\delta
   ,\delta -\beta ,\gamma -\alpha
   ,\beta -\delta ,\alpha -\gamma
   ,\gamma -\delta ,\alpha -\beta
   ,\delta -\gamma ,\beta -\alpha
   ,\delta ,\gamma ,\beta ,\alpha
   ,-\delta ,-\beta ,-\gamma
   ,-\alpha \}$. Thus,  if $\alpha\neq \beta\neq 0$, $\gamma\neq \delta\neq 0$ and $(\alpha-\beta)-(\gamma-\delta)\neq 0 \mod{m}$, then these short cycles do not lift to short cycles in $(G_{8}, m)$.  
By Lemma \ref{txygirth8}, we have that the girth of $(G_{8}, m)$ is at most 8. Observe that, for example, $ (x^*, x^0, x_{0}^0, y^0,  y^*, y^1, x_{0}^1, x^1, x^{*})$ is an 8-cycle. Thus, since $(G_{8}, m)$ has no cycles of length less than 8, 
 the girth of $G_{8}$ is exactly 8.
\end{proof}
In particular,  $\alpha=\delta=1$ and $\beta=\gamma=2$ produces a family of graphs of girth $8$. \\

\subsubsection{Two families of semicubic graphs of girth $12$}
Now, we will give a voltage assignment to produce a graph $G_{4t}$ for $t=3$ that lifts to a family of semicubic graphs of girth $12$. The additional arcs and voltages are listed in Table \ref{table:G12table} and the graph itself is shown in Figure \ref{fig:g12}.

These arc assignments and voltages were found by trial-and-error, with the assumption that each leaf-vertex in $X'_{3}$ needed to be connected to one vertex in $Y_{2}$ and one vertex in $Z_{2}$ in such a way as to avoid creating 4-cycles. 
\begin{theorem}\label{girth12voltage}
The  graph $(G_{12}; m)\in \mathcal{G}_{4t}$ for $t=3$ given by the voltage graph $G_{12}$ formed by adding the arcs and voltage assignments to $T_{12}$ given in Table \ref{table:G12table} 
has girth $12$ for $m\geq 9$. 
Moreover, it is a $(\{3,m\};12)$-graph with three vertices of degree $m$ and $49m$ vertices of degree $3$. 
\end{theorem}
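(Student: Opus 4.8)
The plan is to follow the template set by the proofs of Theorems \ref{G10old} and \ref{thm:g8}, adapting it to girth $12$ and to the three-tree scaffold $T_{12}$. I would first dispose of the ``Moreover'' assertions. Each pinned vertex lifts to a single vertex joined to all $m$ copies of its unique neighbor, so $x^{*}$, $y^{*}$, $z^{*}$ have degree $m$; every other vertex has degree $3$ by construction (internal tree vertices already have degree $3$; the identified vertex $x_{a}=x_{00}$ carries its tree-parent edge together with the two joining edges to $y_{0}$ and $z_{0}$; and each remaining leaf carries one tree edge plus exactly two added arcs). Substituting $t=3$ into the count $3+m\{1+2^{2t-2}+2^{t}+3\sum_{i=t}^{2t-3}2^{i}\}$ gives $3+m(1+16+8+24)=49m+3$, so $(G_{12};m)$ is a $(\{3,m\};12)$-graph with three vertices of degree $m$ and $49m$ of degree $3$. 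Since $(T_{12};m)$ is a subgraph of $(G_{12};m)$ and has girth $12$ by Lemma \ref{txygirth8}, the girth of $(G_{12};m)$ is at most $12$.

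For the lower bound it suffices, exactly as before, to rule out lifted cycles of length $2,4,6,8,10$. By Lemma \ref{bipartite_} the voltage graph is bipartite, so the lift has no odd cycles; and since the added arcs join distinct leaves, there are no $2$-cycles. Following the classification in Observation \ref{lem:nonCycleWalks}, the candidate closed walks that could lift to short cycles are: (i) single cycles of length $4,6,8,10$ in $G_{12}$; (ii) multiple traversals of short cycles; (iii) non-cycle closed walks obtained by joining cycles; and (iv) lollipop walks anchored at a pinned vertex. For (i) I would enumerate by computer, as in Theorem \ref{G10old}, every directed cycle of length at most $10$ in $G_{12}$ (each edge doubled to allow both orientations) and record its voltage sum, with the aim of showing that each such sum is a nonzero integer of absolute value below $9$, hence $\not\equiv 0 \bmod m$ once $m\geq 9$. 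The only short multiple traversal relevant to (ii) is twice around a $4$-cycle, giving length $8$; here I must verify that twice the voltage sum of each of the two $4$-cycles is also $\not\equiv 0 \bmod m$ for $m\geq 9$. This is the precise analogue of the $m=6$ exclusion in the girth-$10$ case and is the likely origin of the threshold $m\geq 9$.

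For (iii) I would use the structural design of Table \ref{table:G12table}: the two $4$-cycles highlighted in Figure \ref{fig:g12} are disjoint, so no length-$8$ walk arises from joining two $4$-cycles at a vertex or shared edge, and (being non-adjacent) they admit no connecting path of length $1$ either, ruling out that length-$10$ configuration. Any remaining length-$10$ joined walk is a $4$-cycle meeting a $6$-cycle at a vertex or shared edge; for these I would argue, from the enumerated sums, that the combined voltage total avoids $0\bmod m$ for $m\geq 9$. For (iv), I would check that the nearest vertex lying on a cycle is far enough from each pinned vertex that the lollipop length $2p+q$ is at least $12$: because each $X'_{3}$-leaf sends only one arc into $Y_{2}$ and one into $Z_{2}$, every $4$-cycle consists of two $X'$-leaves sharing a parent at distance $4$ from $x^{*}$ together with a single $Y$- (or $Z$-) leaf at distance $4$, giving $2\cdot 4+4=12$; likewise the shallowest $6$-cycles touch a tree vertex at distance $3$ from a pinned vertex, giving $2\cdot 3+6=12$. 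Assembling cases (i)--(iv), no closed walk of length at most $10$ lifts to a cycle when $m\geq 9$, so with the upper bound the girth of $(G_{12};m)$ is exactly $12$.

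The main obstacle I anticipate is the completeness and bookkeeping of the short-walk analysis in steps (i)--(iii): because $T_{12}$ has $49$ voltage-graph vertices with a dense arc set between the leaf levels, the number of short cycles far exceeds that in the girth-$8$ and girth-$10$ examples, and the argument genuinely relies on a computer search to certify that the full list of single-cycle sums, doubled $4$-cycle sums, and combined $4$-cycle/$6$-cycle sums all avoid $0\bmod m$ for every $m\geq 9$. The delicate structural point underpinning everything is verifying that the arcs in Table \ref{table:G12table} create no $4$-cycle other than the two intended disjoint ones; once that is established, the joined-cycle and lollipop estimates of steps (iii)--(iv) cannot close up anything shorter than $12$.
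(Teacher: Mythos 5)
Your proposal follows the same route as the paper's proof: count vertices by substituting $t=3$ into the general formula, get the upper bound on girth from $(T_{12};m)$, invoke bipartiteness to exclude odd cycles, certify by computer enumeration that every directed cycle of length at most $10$ in $G_{12}$ has nonzero voltage sum of absolute value at most $8$ (the paper finds $252$ such cycles with sums in $\{1,\dots,8\}$, which is exactly where the threshold $m\geq 9$ comes from), dispose of non-cycle closed walks via Observation \ref{lem:nonCycleWalks}, and check lollipop walks lift to cycles of length at least $12$. So the overall architecture is sound and matches the paper.

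However, your case (ii)--(iii) analysis rests on a concrete misreading: $G_{12}$ contains \emph{no} $4$-cycles at all. The arcs in Table \ref{table:G12table} were chosen precisely to avoid creating them (no two sibling leaves of $X'_{3}$ are sent to a common leaf of $Y_{2}$ or of $Z_{2}$), and the paper states this explicitly; the two highlighted subgraphs in Figure \ref{fig:g12} are $12$-cycles formed by the labeled arcs, not $4$-cycles --- you appear to have carried over the picture from Figures \ref{fig:G10-voltage} and \ref{fig:H10}, where the highlighted subgraphs really are $4$-cycles. Consequently your attribution of the threshold $m\geq 9$ to ``twice around a $4$-cycle'' (the analogue of the $m=6$ exclusion for $G_{10}$) is wrong: the threshold is forced by \emph{single} cycles of length $6$, $8$, or $10$ whose sums reach absolute value $8$. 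This error also makes your case (iii) both misdirected and more laborious than necessary: since every non-cycle, non-reversing, non-lollipop closed walk of length at most $10$ in Observation \ref{lem:nonCycleWalks} must contain a $2$-cycle or a $4$-cycle, and $G_{12}$ has neither, all of these configurations are vacuous --- no combined voltage-sum analysis of joined $4$- and $6$-cycles is needed, and your proposed verification that the two ``intended'' $4$-cycles are disjoint would come back empty. Your enumeration in step (i) would in practice expose the absence of $4$-cycles and the argument would then collapse to the paper's proof, so the plan self-corrects; but as written, the structural claims about $G_{12}$ and the stated origin of the bound $m\geq 9$ are false and should be repaired before this could stand as a proof.
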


\begin{table}[htp]
\caption{Labelled arcs between leaves of $T_{12}$ forming a voltage graph $G_{12}$. The $\mathbb{Z}_{m}$ lifts $(G_{12}, m)$ are girth 12 for $m\geq 9$.} 
\begin{center}


{\small
\begin{tabular}{c|  c  c c c c c c c c c c c}
Starting leaf & $x_{0100}$ & $x_{0101}$ & 
$x_{0110}$ & $x_{0111}$ & $x_{1000}$ & $x_{1001}$ & $x_{1010}$ & $x_{1001}$ 
 & $x_{1100}$ & $x_{1101}$ & $x_{1110}$ & $x_{1111}$ \\
Ending leaf & $y_{010}$ & $y_{011}$ & $y_{100}$ & $ y_{101}$ & $y_{010}$& $y_{011}$ & $y_{110}$& $y_{111}$ & $y_{100}$& $y_{101}$ & $y_{110}$& $y_{111}$\\
Voltage & 1 & 2 & 1 & 2 & 2 & 1 & 1 & 2 & 2 & 1 & 3 & -1 \\ \hline
Starting leaf & $x_{0100}$ & $x_{0101}$ & 
$x_{0110}$ & $x_{0111}$ & $x_{1000}$ & $x_{1001}$ & $x_{1010}$ & $x_{1001}$ 
 & $x_{1100}$ & $x_{1101}$ & $x_{1110}$ & $x_{1111}$ \\
Ending leaf & $z_{110}$ & $z_{111}$ & $z_{100}$ & $ z_{101}$ & $z_{010}$& $z_{011}$ & $z_{100}$& $z_{101}$ & $z_{010}$& $z_{011}$ & $z_{110}$& $z_{111}$\\
Voltage & 2 & 1 & -1 & 3 & -1 & 3 & 1 & -1 & -3 & -2 & -1 & 2 \\ 

\end{tabular}
}
\end{center}
\label{table:G12table}
\end{table}%

\begin{proof}
Since $(G_{12}; m)$ is bipartite, it is enough to show that every even circuit in the voltage graph $G_{12}$ of length smaller than $12$ lifts to a cycle of length at least $12$ in $(G_{12}; m)$. It is easy to find 12-cycles in $(G_{12}; m)$ so the girth is at most 12.

As before, we analyzed all cycles of length at most 10 (``short cycles'' in the voltage graph), using \emph{Mathematica} \cite{Mathematica}, considering the directed voltage graph in which all single-directed arcs are replaced with a pair of alternately oriented arcs (labeling with the negative voltage for the oppositely oriented arc) to allow traveling along the arc in either direction.

There are $252$ short cycles that are not formed by going back and forth along a single edge. We summed the voltages along each (directed) cycle, and observed that the absolute value of the voltage sums along the cycles all lie in the set $\{1, 2, 3, 4, 5, 6, 7, 8\}$. Thus, when $m \leq 8$, there is a short cycle in the voltage graph that lifts to a short cycle in the lift graph, dropping the girth for those values of $m$, but for all other values of $m$, no short cycle in the voltage graph lifts to a short cycle in $(G_{12};m)$.

By construction of $T_{12}$, all lollipop walks lift to long cycles. Next, we need to consider non-reversing non-cycle walks in the voltage graph that may lift to short cycles in the derived graph. From Observation \ref{lem:nonCycleWalks}, any such walk must involve at least one 4-cycle. However, analysis of the cycles in $G_{12}$ shows that, in fact, there are no 4-cycles (by construction), so there are no short walks of this sort that lift to short cycles.

\end{proof}

\begin{figure}[htbp]
\begin{center}
\begin{tikzpicture}[vtx/.style={draw, circle, inner sep = 1 pt, font = \tiny},bigvtx/.style={draw, , inner sep = 1 pt, font = \tiny, minimum height = 10 pt},lbl/.style = {midway, fill = white, inner sep = 1 pt, font=\tiny},]
\pgfmathsetmacro{\scale}{2}
\pgfmathsetmacro{\xx}{40*\scale}
\pgfmathsetmacro{\yy}{15}
\pgfmathsetmacro{\r}{20}
\pgfmathsetmacro{\rr}{8}
\pgfmathsetmacro{\rrr}{2}

\node[bigvtx] (xx) {$x^{*}$};
\node[vtx, below = \yy pt of xx] (x){};
\node[vtx, below left = \yy pt and \xx pt of x] (x0) {};
\node[vtx, below right = \yy pt and \xx pt of x] (x1){}; 
\node[vtx, below left = \yy pt and \xx/\rrr pt  of x0,] (x00) {};
\node[vtx, below right = \yy pt and \xx/\rrr pt  of x0] (x01) {};
\node[vtx, below left = \yy pt and \xx/\rrr pt  of x1] (x10) {};
\node[vtx, below right = \yy pt and \xx/\rrr pt  of x1] (x11) {};

\node[vtx, below left = \yy pt and \xx/\rr pt  of x01] (x010) {};
\node[vtx, below left = \yy pt and \xx/\rr pt  of x10] (x100) {};
\node[vtx, below left = \yy pt and \xx/\rr pt  of x11] (x110) {};

\node[vtx, below right = \yy pt and \xx/\rr pt  of x01] (x011) {};
\node[vtx, below right = \yy pt and \xx/\rr pt  of x10] (x101) {};
\node[vtx, below right = \yy pt and \xx/\rr pt  of x11] (x111) {};

\node[vtx, below left = \yy pt and \xx/\r pt  of x010] (x0100) {};
\node[vtx, below left = \yy pt and \xx/\r pt  of x100] (x1000) {};
\node[vtx, below left = \yy pt and \xx/\r pt  of x110] (x1100) {};
\node[vtx, below left = \yy pt and \xx/\r pt  of x011] (x0110) {};
\node[vtx, below left = \yy pt and \xx/\r pt  of x101] (x1010) {};
\node[vtx, below left = \yy pt and \xx/\r pt  of x111] (x1110) {};
\node[vtx, below right = \yy pt and \xx/\r pt  of x010] (x0101) {};
\node[vtx, below right = \yy pt and \xx/\r pt  of x100] (x1001) {};
\node[vtx, below right = \yy pt and \xx/\r pt  of x110] (x1101) {};
\node[vtx, below right = \yy pt and \xx/\r pt  of x011] (x0111) {};
\node[vtx, below right = \yy pt and \xx/\r pt  of x101] (x1011) {};
\node[vtx, below right = \yy pt and \xx/\r pt  of x111] (x1111) {};

\draw (xx) -- (x);
\foreach \i in {0,1}{
\draw (x) -- (x\i);}
\foreach \j in {0,1}{
\draw (x1) -- (x1\j);
\draw (x10) -- (x10\j);
\draw (x01) -- (x01\j);
\draw (x11) -- (x11\j);

\draw[] (x010) -- (x010\j);
\draw[] (x011) -- (x011\j);
\draw[] (x100) -- (x100\j);
\draw[] (x101) -- (x101\j);
\draw[] (x110) -- (x110\j);
\draw[] (x111) -- (x111\j);
}
\draw (x0) -- (x01);
\draw[] (x0)-- (x00);

\pgfmathsetmacro{\sh}{100}
\pgfmathsetmacro{\ysh}{-200}
\pgfmathsetmacro{\xx}{25*\scale}
\pgfmathsetmacro{\yy}{-13}
\pgfmathsetmacro{\rr}{4}
\pgfmathsetmacro{\rrr}{10}

\begin{scope}[yshift = \ysh, xshift = -\sh]

\node[bigvtx, ] (yy) {$y^{*}$};
\node[vtx, below = 1.5*\yy pt of yy] (y){};
\node[vtx, below left = \yy pt and \xx pt of y] (y0) {};
\node[vtx, below right = \yy pt and \xx pt of y] (y1){}; 
\node[vtx, below right = \yy pt and \xx/\rr pt  of y0] (y01) {};
\node[vtx, below left = \yy pt and \xx/\rr pt  of y1] (y10) {};
\node[vtx, below right = \yy pt and \xx/\rr pt  of y1] (y11) {};

\node[vtx, below left = \yy pt and \xx/\rrr pt  of y01] (y010) {};
\node[vtx, below left = \yy pt and \xx/\rrr pt  of y10] (y100) {};
\node[vtx, below left = \yy pt and \xx/\rrr pt  of y11] (y110) {};

\node[vtx, below right = \yy pt and \xx/\rrr pt  of y01] (y011) {};
\node[vtx, below right = \yy pt and \xx/\rrr pt  of y10] (y101) {};
\node[vtx, below right = \yy pt and \xx/\rrr pt  of y11] (y111) {};

\draw (yy) -- (y);
\foreach \i in {0,1}{
\draw (y) -- (y\i);}
\foreach \j in {0,1}{
\draw (y1) -- (y1\j);
\draw (y10) -- (y10\j);
\draw (y01) -- (y01\j);
\draw (y11) -- (y11\j);
}
\draw (y0) -- (y01);

\end{scope}

\begin{scope}[yshift = \ysh, xshift = \sh]

\node[bigvtx, ] (zz) {$z^{*}$};
\node[vtx, below = 1.5*\yy pt of zz] (z){};
\node[vtx, below left = \yy pt and \xx pt of z] (z0) {};
\node[vtx, below right = \yy pt and \xx pt of z] (z1){}; 
\node[vtx, below right = \yy pt and \xx/\rr pt  of z0] (z01) {};
\node[vtx, below left = \yy pt and \xx/\rr pt  of z1] (z10) {};
\node[vtx, below right = \yy pt and \xx/\rr pt  of z1] (z11) {};

\node[vtx, below left = \yy pt and \xx/\rrr pt  of z01] (z010) {};
\node[vtx, below left = \yy pt and \xx/\rrr pt  of z10] (z100) {};
\node[vtx, below left = \yy pt and \xx/\rrr pt  of z11] (z110) {};

\node[vtx, below right = \yy pt and \xx/\rrr pt  of z01] (z011) {};
\node[vtx, below right = \yy pt and \xx/\rrr pt  of z10] (z101) {};
\node[vtx, below right = \yy pt and \xx/\rrr pt  of z11] (z111) {};

\draw (zz) -- (z);
\foreach \i in {0,1}{
\draw (z) -- (z\i);}
\foreach \j in {0,1}{
\draw (z1) -- (z1\j);
\draw (z10) -- (z10\j);
\draw (z01) -- (z01\j);
\draw (z11) -- (z11\j);
}
\draw (z0) -- (z01);

\end{scope}

\draw[red, ->] (x0100) --node[lbl, pos = .2]{1} (y010);
\draw[red, ->] (x0100) to[out = -35, in = 180-20]node[lbl, pos = .23]{2} (z110);

\draw[cyan,densely dotted,thick, ->] (x0101) --node[lbl, pos =.2]{2} (y011);
\draw[cyan, densely dotted,thick,->] (x0101) to[out = -35, in = 180-20]node[lbl, pos = .21]{1} (z111);
\draw[red, ->] (x0110) --node[lbl, near end]{1} (y100);
\draw[red, ->] (x0110) --node[lbl, pos = .25]{-1} (z100);

\draw[cyan,densely dotted, thick, ->] (x0111) --node[lbl,pos = .6]{2} (y101);
\draw[cyan,densely dotted,thick, ->] (x0111) --node[lbl, pos = .22]{3} (z101);
\draw[red, ->] (x1000) --node[lbl, pos = .7]{2} (y010);
\draw[red, ->] (x1000) --node[lbl, pos = .8]{-1} (z010);

\draw[cyan,densely dotted,thick, ->] (x1001) --node[lbl, pos = .7]{1} (y011);
\draw[cyan,densely dotted,thick, ->] (x1001) --node[lbl, pos = .26]{3} (z011);

\draw[red, ->] (x1010) --node[lbl, pos = .55]{1} (y110);
\draw[red, ->] (x1010) --node[lbl, pos = .61]{1} (z100);

\draw[cyan,densely dotted,thick, ->] (x1011) --node[lbl, pos = .1]{2} (y111);
\draw[cyan,densely dotted,thick, ->] (x1011) --node[lbl, pos = .6]{-1} (z101);

\draw[red, ->] (x1100) --node[lbl, pos =.3]{2} (y100);
\draw[red, ->] (x1100) --node[lbl, pos =.5]{-3} (z010);

\draw[cyan, densely dotted,thick,->] (x1101) --node[lbl,pos =.31]{1} (y101);
\draw[cyan,densely dotted,thick, ->] (x1101) --node[lbl, pos =.31]{-2} (z011);

\draw[red, ->] (x1110) --node[lbl, pos =.4]{3} (y110);
\draw[red, ->] (x1110) --node[lbl, pos =.21]{-1} (z110);

\draw[cyan,densely dotted, thick,->] (x1111) --node[lbl, pos =.41]{-1} (y111);
\draw[cyan, densely dotted,thick,->] (x1111) --node[lbl, pos =.2]{2} (z111);

\draw[ thick] (x00) to[bend right = 20] (y0);
\draw[ thick] (x00) to[bend right = 17] (z0);
\end{tikzpicture}
\caption{The voltage graph $G_{12}$. The colors/styles on the labeled arcs show the two 12-cycles (appropriately directing the arrows) formed by the labeled arcs between the leaves of $T_{12}$.}\label{fig:g12}
\end{center}
\end{figure}

The smallest element of this family is a semiregular graph with 3 vertices of degree 9 and $441 = 49\cdot 9$ of vertices of degree 3.

As in Section \ref{case4t+2}, we modified the graph $T_{12}$ and added additional labeled arcs to produce a new voltage graph $H_{12}$ with fewer vertices than $G_{12}$. The $\mathbb{Z}_{m}$ lifts of $H_{12}$ produce graphs of girth 12 with $41m + 3$ vertices, which is a significant improvement. Specifically, we modified $T_{12}$ by deleting the eight leaf vertices $x_{1000}$, $x_{1001}$, $x_{1100}$ $x_{1101}$, $y_{100}$, $y_{111}$, $z_{100}$, $z_{111}$ and incident edges. These deleted vertices and edges are shown in light gray in Figure \ref{fig:H12}. We then added new labeled directed arcs from $x_{100}$ to $y_{10}, z_{10}$ and from $x_{110}$ to $y_{111}$ and $z_{111}$, and then searched for voltage assignments on those arcs and arcs added between the remaining leaves of $T_{12}$ that would lift to graphs of girth 12. 

\begin{figure}
\begin{center}
\begin{tikzpicture}[vtx/.style={draw, circle, inner sep = 1 pt, font = \tiny},bigvtx/.style={draw, , inner sep = 1 pt, font = \tiny, minimum height = 10 pt},lbl/.style = {midway,  fill = white, inner sep = 1 pt, font=\tiny},]
\pgfmathsetmacro{\scale}{2}
\pgfmathsetmacro{\xx}{40*\scale}
\pgfmathsetmacro{\yy}{15}
\pgfmathsetmacro{\r}{20}
\pgfmathsetmacro{\rr}{8}
\pgfmathsetmacro{\rrr}{2}

\node[bigvtx] (xx) {$x^{*}$};
\node[vtx, fill=black, below = \yy pt of xx] (x){};
\node[vtx, below left = \yy pt and \xx pt of x] (x0) {};
\node[vtx, below right = \yy pt and \xx pt of x] (x1){}; 
\node[vtx, fill=black,below left = \yy pt and \xx/\rrr pt  of x0,] (x00) {};
\node[vtx,fill=black, below right = \yy pt and \xx/\rrr pt  of x0] (x01) {};
\node[vtx,fill=black, below left = \yy pt and \xx/\rrr pt  of x1] (x10) {};
\node[vtx,fill=black, below right = \yy pt and \xx/\rrr pt  of x1] (x11) {};

\node[vtx, below left = \yy pt and \xx/\rr pt  of x01] (x010) {};
\node[vtx, below left = \yy pt and \xx/\rr pt  of x10] (x100) {};
\node[vtx, below left = \yy pt and \xx/\rr pt  of x11] (x110) {};

\node[vtx, below right = \yy pt and \xx/\rr pt  of x01] (x011) {};
\node[vtx, below right = \yy pt and \xx/\rr pt  of x10] (x101) {};
\node[vtx, below right = \yy pt and \xx/\rr pt  of x11] (x111) {};

\node[vtx, fill=black,below left = \yy pt and \xx/\r pt  of x010] (x0100) {};
\node[vtx,fill=black, below left = \yy pt and \xx/\r pt  of x100, opacity = .15] (x1000) {};
\node[vtx, fill=black,below left = \yy pt and \xx/\r pt  of x110, opacity = .15] (x1100) {};
\node[vtx,fill=black, below left = \yy pt and \xx/\r pt  of x011] (x0110) {};
\node[vtx,fill=black, below left = \yy pt and \xx/\r pt  of x101] (x1010) {};
\node[vtx,fill=black, below left = \yy pt and \xx/\r pt  of x111] (x1110) {};
\node[vtx,fill=black, below right = \yy pt and \xx/\r pt  of x010] (x0101) {};
\node[vtx, fill=black,below right = \yy pt and \xx/\r pt  of x100, opacity = .15] (x1001) {};
\node[vtx,fill=black, below right = \yy pt and \xx/\r pt  of x110, opacity = .15] (x1101) {};
\node[vtx,fill=black, below right = \yy pt and \xx/\r pt  of x011] (x0111) {};
\node[vtx,fill=black, below right = \yy pt and \xx/\r pt  of x101] (x1011) {};
\node[vtx,fill=black, below right = \yy pt and \xx/\r pt  of x111] (x1111) {};

\draw (xx) -- (x);
\foreach \i in {0,1}{
\draw (x) -- (x\i);}
\foreach \j in {0,1}{
\draw (x1) -- (x1\j);
\draw (x10) -- (x10\j);
\draw (x01) -- (x01\j);
\draw (x11) -- (x11\j);

\draw[] (x010) -- (x010\j);
\draw[] (x011) -- (x011\j);
\draw[opacity = .15] (x100) -- (x100\j);
\draw[] (x101) -- (x101\j);
\draw[opacity = .15] (x110) -- (x110\j);
\draw[] (x111) -- (x111\j);
}
\draw (x0) -- (x01);
\draw[] (x0)-- (x00);

\pgfmathsetmacro{\sh}{100}
\pgfmathsetmacro{\ysh}{-200}
\pgfmathsetmacro{\xx}{25*\scale}
\pgfmathsetmacro{\yy}{-13}
\pgfmathsetmacro{\rr}{4}
\pgfmathsetmacro{\rrr}{10}

\begin{scope}[yshift = \ysh, xshift = -\sh]

\node[bigvtx, ] (yy) {$y^{*}$};
\node[vtx,fill=black, below = 1.5*\yy pt of yy] (y){};
\node[vtx, below left = \yy pt and \xx pt of y] (y0) {};
\node[vtx, below right = \yy pt and \xx pt of y] (y1){}; 
\node[vtx,fill=black, below right = \yy pt and \xx/\rr pt  of y0] (y01) {};
\node[vtx,fill=black, below left = \yy pt and \xx/\rr pt  of y1] (y10) {};
\node[vtx,fill=black, below right = \yy pt and \xx/\rr pt  of y1] (y11) {};

\node[vtx, below left = \yy pt and \xx/\rrr pt  of y01] (y010) {};
\node[vtx, below left = \yy pt and \xx/\rrr pt  of y10, opacity = .15] (y100) {};
\node[vtx, below left = \yy pt and \xx/\rrr pt  of y11] (y110) {};

\node[vtx, below right = \yy pt and \xx/\rrr pt  of y01] (y011) {};
\node[vtx, below right = \yy pt and \xx/\rrr pt  of y10] (y101) {};
\node[vtx, below right = \yy pt and \xx/\rrr pt  of y11, opacity = .15] (y111) {};

\draw (yy) -- (y);
\foreach \i in {0,1}{
\draw (y) -- (y\i);}
\draw[opacity = .15] (y10) -- (y100);
\draw (y10) -- (y101);
\foreach \j in {0,1}{
\draw (y1) -- (y1\j);
\draw (y01) -- (y01\j);
}
\draw (y0) -- (y01);
\draw[opacity=.15] (y11) -- (y111);
\draw[] (y11) -- (y110);

\end{scope}

\begin{scope}[yshift = \ysh, xshift = \sh]

\node[bigvtx, ] (zz) {$z^{*}$};
\node[vtx,fill=black, below = 1.5*\yy pt of zz] (z){};
\node[vtx, below left = \yy pt and \xx pt of z] (z0) {};
\node[vtx, below right = \yy pt and \xx pt of z] (z1){}; 
\node[vtx,fill=black, below right = \yy pt and \xx/\rr pt  of z0] (z01) {};
\node[vtx,fill=black, below left = \yy pt and \xx/\rr pt  of z1] (z10) {};
\node[vtx,fill=black, below right = \yy pt and \xx/\rr pt  of z1] (z11) {};

\node[vtx, below left = \yy pt and \xx/\rrr pt  of z01] (z010) {};
\node[vtx, below left = \yy pt and \xx/\rrr pt  of z10, opacity = .15] (z100) {};
\node[vtx, below left = \yy pt and \xx/\rrr pt  of z11] (z110) {};

\node[vtx, below right = \yy pt and \xx/\rrr pt  of z01] (z011) {};
\node[vtx, below right = \yy pt and \xx/\rrr pt  of z10] (z101) {};
\node[vtx, below right = \yy pt and \xx/\rrr pt  of z11, opacity = .15] (z111) {};

\draw (zz) -- (z);
\foreach \i in {0,1}{
\draw (z) -- (z\i);}
\foreach \j in {0,1}{
\draw (z1) -- (z1\j);
\draw (z01) -- (z01\j);
}
\draw (z10) -- (z101);
\draw[opacity = .15] (z10) -- (z100);
\draw (z0) -- (z01);
\draw (z11) -- (z110);
\draw[opacity=.15] (z11) -- (z111);

%

\end{scope}

\draw[orange, thick, ->] (x100) to[bend left = 10]node[lbl, pos = .2,rotate = 53]{$a=1$} (y10);
\draw[orange, thick, ->] (x100) to[out = -50, in = 180]node[lbl, pos = .68,rotate = -33]{$b=-1$} (z10);

\draw[green!60!black, thick, ->] (x110) to[out = 180+50, in = 25]node[lbl, pos = .2, rotate = 43]{$c=-1$} (y11);
\draw[green!60!black, thick, ->] (x110) to[out = -90+20, in = 30]node[lbl, pos = .65, rotate = -50]{$d=1$} (z11);


\draw[magenta, ->] (x0100) to[bend right = 10]node[lbl, pos = .8,rotate = 40]{$e=2$} (y010);
\draw[magenta, ->] (x0100) to[in = 180-20, out = -30]node[lbl, pos = .8, rotate = -15]{$f=1$} (z101);
\draw[magenta, ->] (x1111) to[in = 30, out = 180+45]node[lbl, pos = .1, rotate = 25]{$g=1$} (y010);
\draw[magenta, ->] (x1111) to[bend left = 10]node[lbl, pos = .75,rotate = -85]{$h=2$} (z110);


\draw[magenta, ->] (x0101) to[bend right = 10]node[lbl, pos = .6,rotate = 40]{$j=1$} (y011);
\draw[magenta, ->,] (x0101) to[in = 180-25, out = -30]node[lbl, pos = .75, rotate = -15]{$k=-2$} (z110);
\draw[magenta, ->, ] (x1110) to[in = 30,out = 180+45]node[lbl, pos = .1,rotate = 30]{$l=2$} (y011);
\draw[magenta, ->, ] (x1110) to[bend left = 10]node[lbl, pos = .7, rotate = -85]{$p=-2$} (z101);

\draw[blue, ->, , densely dotted, thick] (x0110) to[bend right = 10]node[lbl, pos = .3,rotate = 75]{$q=-1$} (y101);
\draw[blue, ->, , densely dotted, thick] (x0110) to[in = 180-30, out = -30]node[lbl, pos = .85, rotate = -30]{$r=1$} (z010);
\draw[blue, ->, , densely dotted, thick] (x1011) to[in = 30,out = 180+45]node[lbl, pos = .18, rotate = 35]{$s=-3$} (y101);
\draw[blue, ->, , densely dotted, thick] (x1011) to[bend left = 10]node[lbl, pos = .2, rotate = -70]{$t=-6$} (z011);


\draw[blue, ->, densely dotted, thick] (x0111) to[bend right = 10]node[lbl, pos = .4, rotate = 75]{$u=1$} (y110);
\draw[blue, ->, densely dotted, thick] (x0111) to[in = 180-30, out = -30]node[lbl, pos = .77,rotate = -30]{$v=-1$} (z011);
\draw[blue, ->, densely dotted, thick] (x1010) to[in = 30,out = 180+45]node[lbl, pos = .18, rotate = 35]{$w=-2$} (y110);
\draw[blue, ->, densely dotted, thick] (x1010) to[bend left = 20]node[lbl, pos = .6, rotate = -90]{$\alpha=2$} (z010);


\draw[ thick] (x00) to[bend right = 20] (y0);
\draw[ thick] (x00) to[bend right = 17] (z0);
\end{tikzpicture}
\caption{The voltage graph $H_{12}$. The colors/styles of the arcs between the leaves of $T_{12}$ identify disjoint 8-cycles (if arrows are directed appropriately). The leaves deleted from $T_{12}$ to allow the formation of $H_{12}$ are shown in light gray.}
\label{fig:H12}
\end{center}
\end{figure}

\begin{theorem}\label{girth12voltage-H}
The  graph $(H_{12}; m)$ given by the voltage graph $H_{12}$ shown 
in Figure \ref{fig:H12} with voltages
given in Table \ref{table:H12table} 
has girth $12$ for $m \geq 10$. Moreover, it is a $(\{3,m\};12)$-graph with three vertices of degree $m$ and $41m$ vertices of degree $3$. 
\end{theorem}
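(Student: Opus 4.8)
The plan is to reuse, essentially verbatim, the architecture of the proof of Theorem~\ref{girth12voltage}, adapting it to the pruned-and-reattached voltage graph $H_{12}$: first settle the degree and order claims together with bipartiteness, then bound the girth above by exhibiting a single short lift-cycle, and finally bound it below by a case analysis of short closed walks in the voltage graph.

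For the structural ``moreover'' claim, I would note that $H_{12}$ is obtained from $T_{12}$ by deleting the eight leaves $x_{1000}, x_{1001}, x_{1100}, x_{1101}, y_{100}, y_{111}, z_{100}, z_{111}$ and their pendant edges, and then re-attaching directed arcs (to $y_{10}, z_{10}$ from $x_{100}$, to $y_{11}, z_{11}$ from $x_{110}$, and between the remaining leaves) so that every non-pinned vertex again has degree $3$. Since $T_{12}$ has $49$ non-pinned vertices, $H_{12}$ has $49-8 = 41$; the three pinned vertices $x^{*}, y^{*}, z^{*}$ lift to single vertices of degree $m$ (by the observation on pinned vertices following their definition), while each of the $41$ non-pinned vertices lifts to $m$ vertices of degree $3$. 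Hence $(H_{12}; m)$ is a $(\{3,m\};12)$-graph with $3$ vertices of degree $m$ and $41m$ vertices of degree $3$. As the reattached arcs again run only between the leaves of $X'_{3}$ and the leaves of $Y_{2}, Z_{2}$, the graph $H_{12}$ remains a member of the family $\mathcal{G}_{4\cdot 3}$, so it is bipartite by Lemma~\ref{bipartite_} and only even cycle lengths must be excluded. For the upper bound $g(H_{12}; m) \leq 12$, I would exhibit the length-$6$ path $P = (x^{*}, x, x_{0}, x_{00}, y_{0}, y, y^{*})$ between the pinned vertices $x^{*}$ and $y^{*}$; none of its vertices was deleted and all its edges carry voltage $0$, so by Lemma~\ref{lemma:pinnedwalks} it lifts to a $12$-cycle, exactly as in Lemma~\ref{txygirth8}.

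The real work, and the step I expect to be the main obstacle, is the lower bound $g(H_{12}; m) \geq 12$ for $m \geq 10$, which I would organize into the three types of walk identified in Observation~\ref{lem:nonCycleWalks} and the preceding discussion. (1) \emph{Single cycles:} using \emph{Mathematica}~\cite{Mathematica} on the doubled, oriented voltage graph (each arc replaced by two oppositely directed copies with negated labels), I would enumerate every cycle of length at most $10$ and record its voltage sum. The crux is to confirm that the shortest cycles in $H_{12}$ have length $8$ (these are the disjoint $8$-cycles indicated by the arc colorings in Figure~\ref{fig:H12}), so that there are no $2$-, $4$-, or $6$-cycles, and that the absolute value of the voltage sum over every $8$- or $10$-cycle lies in $\{1, \ldots, 9\}$. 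Since $2 \cdot 8 = 16 > 12$, only once around a cycle can produce a short lift, so this bound forces every such sum to be nonzero modulo $m$ for all $m \geq 10$; the label $t=-6$ in Figure~\ref{fig:H12} is what pushes some sum up to magnitude $9$ and thereby raises the threshold to $m \geq 10$ rather than the $m \geq 9$ of Theorem~\ref{girth12voltage}. (2) \emph{Lollipop walks:} one checks that no cycle passes through $x$, $y$, or $z$ (the unique non-pinned neighbors of the pinned vertices), so the nearest cycle vertex to any pinned vertex lies at distance $2$; by Lemma~\ref{lollipop} every lollipop therefore lifts to a cycle of length at least $2 \cdot 2 + 8 = 12$. (3) \emph{Non-cycle closed walks:} by Observation~\ref{lem:nonCycleWalks} every non-reversing, non-cycle, non-lollipop closed walk of length at most $10$ must contain a $2$- or $4$-cycle, and the enumeration in step~(1) shows $H_{12}$ has neither, so no such walk exists.

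Combining the three cases gives girth exactly $12$ for $m \geq 10$. The delicate part throughout is the reliability of the computer enumeration and, conceptually, verifying that the extreme short-cycle voltage sums really are bounded by $9$ in absolute value: this bound is not merely convenient but necessary, since any short-cycle sum of absolute value $k \geq 10$ would vanish modulo $m = k \geq 10$ and so falsify the claim for that $m$. Establishing the bound $9$ (and, dually, exhibiting a sum of magnitude exactly $9$ to explain why $m = 9$ is excluded) is the heart of the argument, with the remaining lollipop and joined-cycle cases following cleanly once the $8$-cycle structure and the absence of shorter cycles are confirmed.
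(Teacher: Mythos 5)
Your overall plan --- order and degree count, bipartiteness, a length-6 path between pinned vertices lifting to a 12-cycle, Mathematica enumeration of short cycles in the doubled directed voltage graph, Observation \ref{lem:nonCycleWalks} for non-cycle walks, and Lemma \ref{lollipop} for lollipops --- is exactly the paper's, and your count of $41m+3$ vertices and the upper bound are correct. However, your lower-bound argument rests on a false structural claim about $H_{12}$. You assert that the shortest cycles have length 8, so that there are no 6-cycles, and that no cycle passes through $x$, $y$, or $z$. Both statements fail: because the new arcs $a,b,c,d$ attach at $x_{100}, x_{110}$ and $y_{10}, y_{11}, z_{10}, z_{11}$, two levels above the leaf level, they create short cycles that $G_{12}$ did not have. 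For instance, $(x_{100}, y_{10}, y_{101}, x_{1011}, x_{101}, x_{10})$, using arc $a$ and arc $s$ together with four tree edges, is a 6-cycle (voltage sum $a-s = 1-(-3) = 4$), as is $(x_{110}, z_{11}, z_{110}, x_{1111}, x_{111}, x_{11})$ via arcs $d$ and $h$ (sum $d - h = -1$); and $(x, x_{1}, x_{10}, x_{100}, y_{10}, y_{1}, y, y_{0}, x_{00}, x_{0})$ is a 10-cycle through both $x$ and $y$, using arc $a$ and the tree edge $x_{00}y_{0}$. This is why the paper's proof explicitly enumerates all $254$ cycles of lengths $6$, $8$, \emph{and} $10$, not merely lengths 8 and 10.

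The error propagates into your lollipop case, which is where the genuine gap lies: the bound $2\cdot 2 + 8 = 12$ does not cover the actual configurations. With 6-cycles present, a 6-cycle at distance 2 from a pinned vertex would lift to a 10-cycle ($2\cdot 2 + 6$), and an 8-cycle at distance 1 would lift to a 10-cycle as well, so one must verify cycle-by-cycle that $2p + q \geq 12$: every 6-cycle lies at distance at least 3 from each pinned vertex (e.g., the 6-cycle above reaches $x_{10}$, at distance 3 from $x^{*}$), every 8-cycle at distance at least 2 (the paper's own example: the path from $x^{*}$ to $x_{1}$ of length 2 joined to the 8-cycle $(x_{1}, x_{10}, x_{100}, y_{10}, y_{1}, y_{11}, x_{110}, x_{11})$ lifts to exactly $2\cdot 2 + 8 = 12$), and every 10-cycle at distance at least 1 (the 10-cycle through $x$ gives exactly $2\cdot 1 + 10 = 12$). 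These minima all land exactly on 12, so the theorem survives, but your stated verification would not have performed these checks. Your case (3) is fine, since $H_{12}$ indeed has no 2- or 4-cycles, and the 6-cycles enter only through the once-around voltage check (twice around already gives length 12). A small further caution: $H_{12}$ is not literally a member of $\mathcal{G}_{4\cdot 3}$ as defined, since the arcs $a,b,c,d$ do not join leaves of the original trees; bipartiteness still holds, but by extending the parity argument of Lemma \ref{bipartite_} (the new arcs join even-level $x$-vertices to odd-level $y$- or $z$-vertices), not by citing the lemma directly.
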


\begin{table}[htp]
\caption{Labelled arcs between leaves of a modified $T_{12}$ forming a voltage graph $H_{12}$. The $\mathbb{Z}_{m}$ lifts $(H_{12}, m)$ are girth 12 for $m \geq 10$.}
\begin{center}

\bigskip

%

{\small
\begin{tabular}{c | c c c c  }
Starting leaf & $x_{100}$ & $x_{100}$ & $x_{110}$ & $x_{110}$\\
Ending leaf & $y_{10}$ & $z_{10}$ & $y_{11}$ & $z_{11}$\\
Voltage & $a=1$ & $b=-1$ & $c=-1$ & $d=1$
\end{tabular}

\bigskip

\begin{tabular}{c|  c  c c c c c c c}
Starting leaf & $x_{0100}$ & $x_{0101}$ & $x_{0110}$ & $x_{0111}$  & $x_{1010}$ & $x_{1001}$ & $x_{1110}$ & $x_{1111}$ \\
Ending leaf & $y_{010}$ & $y_{011}$ & $y_{101}$ & $y_{110}$ & $y_{110}$ & $y_{101}$ & $y_{011}$ & $y_{010}$\\
Voltage & $e=2$ & $j=1$ & $q = -1$ & $u = 1$ & $w=-2$ & $s=-3$ & $l=2$ & $g=1$  \\ \hline
Starting leaf & $x_{0100}$ & $x_{0101}$ & $x_{0110}$ & $x_{0111}$ & $x_{1010}$ & $x_{1001}$  & $x_{1100}$ & $x_{1101}$  \\
Ending leaf & $z_{101}$ & $z_{110}$ & $z_{010}$ & $z_{011}$ & $z_{010}$ & $z_{011}$ & $z_{101}$ & $z_{110}$ \\
Voltage &$f=1$ & $k=-2$ & $r = 1$ & $v = 5$ & $\alpha=2$ & $t=-6$ & $p=-2$ & $h=2$  \\ 
\end{tabular}
}
\end{center}
\label{table:H12table}
\end{table}%

\begin{proof} It is easy to find 12-cycles in $(H_{12}; m)$ so the girth is at most 12. As previously, we use Mathematica to analyze the voltage sums along short cycles (length at most 10) in the graph. By construction, the graph has no 4-cycles, so all non-cycle closed walks lift to cycles of length at least 12, by Observation \ref{lem:nonCycleWalks}. Moreover, all lollipop walks lift to cycles of at least length 12, by construction. (For example, there is a new lollipop walk formed by connecting the path $(x^{*}, x_{1})$ to the cycle $(x_{1}, x_{10}, x_{100}, y_{10}, y_{1}, y_{11}, x_{110}, x_{11}, x_{1})$ that uses two of the newly-introduced arcs that connect vertices that were not leaves of $T_{12}$, but even this lollipop lifts to a cycle of length 12, by Lemma \ref{lollipop}.)

Thus, to determine the girth, it suffices to analyze the voltage sums of all cycles of length 6, 8, 10. There are 254 such cycles. Analyzing the voltage sums along all these cycles as before shows that for each cycle, the absolute value of the voltage sum lies in the set $\{ 1,2,3,4,5,6,7,8,9\}$. Thus, for $1 \leq m \leq 9$, there is a short cycle in the voltage graph that lifts to a short cycle in the derived graph, but for $m > 9$ no short cycle lifts to a short cycle in $(H_{12},m)$.
\end{proof}

\section{Open questions}

Here we give a short, non-comprehensive list of natural open questions related to this work. 
\begin{itemize}

\item Find voltage assignments to construct graphs of girth $10$ and $12$ for missing values of $m$. 
More specifically: 
\begin{itemize}

\item As we said at the end of Section \ref{girth10}, $(H_{10};m)$ is a graph of girth 10 with $20m$ vertices of degree $3$ and only two vertices of degree $m$, for $m\geq 7$. In \cite{AEJ16} the authors give a graph with the same parameters for $m=4$, but this graph has $4$ vertices of degree $m$ and only $78$ vertices of degree $3$. Can we find a similar voltage graph to $H_{10}$ (either a different voltage assignment for the same arcs as in $H_{10}$, or a different assignment of arcs and voltages using the same ``underlying'' tree as in $H_{10}$) that gives us a derived graph $(H'_{10};m)$ of girth 10 for all $m\geq 4$? Preliminary investigations suggest that new arc assignments are required to find smaller examples.

\item Find voltage assignments to produce $(G_{12};m)$ graphs with the construction given in Theorem \ref{girth12voltage} for any $4\leq m \leq 8$, or show no such graphs exist.  
\item Find different voltage assignments for the graph $H_{12}$ that produce graphs of girth 12 for $m \in \{3,4,\ldots, 9\}$,  or show no assignments exist. Preliminary investigations suggest that new arc assignments are required to find smaller examples.
Does there exist a different graph $H'_{12}$ (for example, using the same modified tree structure and arcs $a,b,c,d$ but different arcs among the remaining leaves of $T_{12}$) that produces graphs of girth 12 for these smaller values of $m$?  
 For example, can we find graphs of small girth if we construct a 16-cycle among the remaining leaves of $T_{12}$, or if we interlace two 8-cycles differently?
 \end{itemize}


 \item In Corollary \ref{girth14}, we obtain $(3,m;14)$-graphs of order greater than $115m$ for $m\geq 3$.  Can this voltage method be used to produce graphs of even girth $14$ or larger in a tractable way? The voltage assignments used to construct the graph families exhibited in this paper were found by educated trial-and-error. This approach becomes less tractable as the number of parameters increases.

\item Study the same problem with an odd degree. The last two authors are making progress on this project \cite{ABF23}.

\end{itemize}

\end{document}